\def\@seccntformat#1{\csname the#1\endcsname.\quad}
\renewcommand\section{\@startsection {section}{1}{\z@}%
                                   {-3.5ex \@plus -1ex \@minus -.2ex}%
                                   {2.3ex \@plus.2ex}%
                                   {\normalfont\bf\center }}
\renewcommand\subsection{\@startsection {subsection}{1}{\z@}%
                                   {-3.5ex \@plus -1ex \@minus -.2ex}%
                                   {2.3ex \@plus.2ex}%
                                   {\normalfont\bf}}
\newcommand{\res}{\operatorname{Res}}
\newcommand{\RE}{\operatorname{Re}}
\newcommand{\IM}{\operatorname{Im}}
\newtheoremstyle{new-thm}
 {3pt}
 {3pt}
 {\it}
 {0pt} 
 {\bf}
 {.}
 {.5em}
 {}
\newtheoremstyle{new-def}
 {3pt}
 {3pt}
 {\rm}
 {0pt} 
 {\bf}
 {.}
 {.5em}
 {}
\theoremstyle{new-thm}
  \newtheorem{thm}{Theorem}
  \newtheorem{lemma}[thm]{Lemma}
  \newtheorem{prop}[thm]{Proposition}
\theoremstyle{new-def}
  \newtheorem{remark}[thm]{Remark}
\numberwithin{equation}{section}
\numberwithin{thm}{section}
\begin{document}

\vspace*{2cm} 

\begin{center}
{\large\bf Hitting times of Bessel processes, 
volume of Wiener sausages
and zeros of Macdonald functions}
\end{center}

\bigskip

\begin{center}
{\large Yuji Hamana and Hiroyuki Matsumoto}
\footnote{This work is partially supported by the Grant-in-Aid
for Scientific Research (C) No.23540183 and No.24540181
of Japan Society for the Promotion of Science (JSPS).}
\end{center}


\bigskip


\begin{abstract}
We derive formulae for some ratios of the Macdonald functions, 
which are simpler and easier to treat than known formulae.
The result gives two applications in probability theory.
One is the formula for the L{\'e}vy measure of the distribution
of the first hitting time of a Bessel process and the other is
an explicit form for the expected volume of the Wiener sausage
for an even dimensional Brownian motion.
Moreover, the result enables us to write down the algebraic equations
whose roots are the zeros of Macdonald functions.
\end{abstract}



\section{Introduction}

\noindent
The (modified) Bessel functions appear in various kinds of situations.
In probability theory, for example, the modified Bessel functions
of the first kind, denoed by $I_\nu$, appear in the explicit form
for the transition probability densities of the Bessel processes.
In this article we are concerned with the ratio of the modified
Bessel functions. It is known that such functions represent
the Laplace transforms of the first hitting times of
the Bessel processes (cf. \cite{2,12}) and of
the expectations of the Wiener sausage (cf. \cite{5}).  
From an analytical point of view Ismail~et~al~\cite{9,10}
have studied when such functions are completely monotone.

We are mainly concerned with the ratios of the modified Bessel function
of the second kind $K_\nu$, so-called the Macdonald functions.
Ismail~\cite{9} has shown that $(K_{\nu+1}/K_\nu)(\sqrt{z})$
is completely monotone by expressing it as a Stieltjes transform of
some function. From his expression we can invert the Laplace transform,
but the resulting formula seems to be complicated.
The purpose of this article is to rewrite the resulting ratio
in a simpler form by means of the zeros
of $K_\nu$ and to invert the Laplace transform, which completes
a partial result in \cite{21}.
The result is applied to two questions in probability theory
and a study on the zeros of $K_\nu$.

Recently, in connection with the first hitting times of Bessel processes,
the authors~\cite{7} have studied another type of the ratios
of the Macdonald functions and decomposed it into a sum of
several functions which are easy to treat.
A similar method via some contour integrals is effective in this article.

The purpose of \cite{7} is to show an explicit form of the distribution
function for the first hitting time $\tau_{a,b}^{(\nu)}$ to $b$ 
of the Bessel process with index $\nu$ starting at $a$. The density for
$\tau_{a,b}^{(\nu)}$ and its asymptotics have been discussed in \cite{8}.
The infinite divisibility of the distribution was first investigated 
by Kent~\cite{12}. To be accurate, the conditional distribution of
$\tau_{a,b}^{(\nu)}$ under the condition that it is finite is
infinitely divisible (cf. \cite{10}).
General theory in the infinite divisibility of the distributions 
of the first hitting times of one-dimensional diffusion processes 
is given by Yamazato~\cite{20}.

As referred in \cite{7}, the function $K_{\nu+1}/K_\nu$ appear 
when we give an expression for the L\'evy measure.   
We may apply our result on the ratio of the Macdonald functions 
to obtain an explicit expression for the L\'evy measure 
of the distribution of $\tau_{a,b}^{(\nu)}$.

Moreover, the function $(K_{d/2}/K_{d/2-1})(r\sqrt{2\lambda})$
in $\lambda>0$ represents the Laplace transform of the expectation
of the Wiener sausage for the $d$-dimensional Brownian motion
associated with a close ball with radius $r$ (cf. \cite{5}).  
In the case when $d$ is odd, Hamana~\cite{6} divided
the function into the sum of several functions
of which the inverse Laplace transforms can be obtained easily
and deduced an exact form of the mean volume of
the Wiener sausage by means of zeros of $K_{d/2-1}$.
By using our result we can show that, also in the even dimensional case, 
the expectation is represented in a similar form.
We should remark that the Wiener sausage for a Brownian motion
associated with a general compact set is investigated in
\cite{14,18} and so on, and that the same problem
for a stable sausage is discussed in \cite{1,16}.

In the results mentioned so far we have express several quantities
by using the zeros of $K_\nu$. If we consider the asymptotic behavior
of the ratio $(K_{\nu+1}/K_\nu)(w)$ as $w\to\infty$ or $w\to0$,
we may conversely obtain information on the zeros of $K_\nu$.
In particular, we see that the zeros are the roots
of some algebraic equations with real coefficients
and show the way to obtain the coefficients.
The result is the improvement of that on $K_n$
for an integer $n$ given in \cite{21}.

This article is organized as follows. Section 2 is devoted to
a decomposition of the functions $K_{\nu+1}/K_\nu$.
Section 3 is devoted to a representation
of the L{\'e}vy measure of the first hitting time of
the Bessel process. We calculate the expected volume
of the Wiener sausage for
the even dimensional Brownian motion in Section 4
and discuss its large time asymptotics in Section 5.
In the final Section 6 we study the complex zeros
of the Macdonald functions.


\section{Ratios of Macdonald functions}

\noindent
For each complex number $\nu$ the modified Bessel function
of order $\nu$ is the fundamental solutions of
the modified Bessel differential equation
\begin{equation}
z^2\frac{d^2w}{dz^2}+z\frac{dw}{dz}-(z^2+\nu^2)w=0.
\label{2.1}
\end{equation}
The standard notation $I_\nu$ and $K_\nu$ are used to denote
the functions, which are called the first kind and
the second kind, respectively. See \cite{3,13,19} et al.
Especially, $K_\nu$ is also called the Macdonald function
of order $\nu$. In this article we treat only the case
when the all orders of modified Bessel functions are real.

Before giving the result, we recall several facts
on the zeros of the Macdonald function. For $\nu\in\mathbb R$ let $N(\nu)$
be the number of zeros of $K_\nu$. It is known
that $N(\nu)$ is equal to $|\nu|-1/2$ if $\nu-1/2$ is an integer
and that $N(\nu)$ is the even number closest to $|\nu|-1/2$ otherwise.
$N(\nu)=0$ if $|\nu|<3/2$. If $|\nu|=2n+3/2$ for some integer $n$,
$z^{|\nu|}e^z K_\nu(z)$ is a polynomial of degree $2n+1$
and has a real (negative) zero. Otherwise, $K_\nu$ does not have
real zeros. Each zero, if exists, lies in the half plain
$\{z\in{\mathbb C}\,;\,\RE(z)<0\}$, denoted by ${\mathbb C}^-$.
When $N(\nu)\geqq1$, we write $z_{\nu,1},z_{\nu,2},\dots,z_{\nu,N(\nu)}$
for the zeros. Since $K_\nu$ is one of the fundamental solutions of
the second order equation \eqref{2.1}, all zeros of $K_\nu$ are of multiplicity one
by the uniqueness of the solution of ordinary differential equations.
This means that all zeros of $K_\nu$ are distinct.
For details, see \cite[pp.511--513]{19}.

Let $D=\{z\in\mathbb C\setminus\{0\}\,;\,|\arg z|<\pi\}$
and $D_\nu=\{z\in D\,;\,K_\nu(z)\neq0\}$ for $\nu\in\mathbb R$.
The purpose of this section is to show the following theorem.

\medskip

\begin{thm}\label{Theorem 2.1}
Let $w\in D_\nu$ and $\nu^+=\max\{\nu,0\}$.
In addition, we put, for $\mu\geqq0$,
\begin{equation}
G_\mu(x)=K_\mu(x)^2+\pi^2 I_\mu(x)^2+2\pi\sin(\pi\mu)K_\mu(x)I_\mu(x),
\quad x>0.
\label{2.2}
\end{equation}
{\rm (1)} When $\nu-1/2$ is an integer, we have that,
if $|\nu|=1/2$,
\begin{equation}
\frac{K_{\nu+1}(w)}{K_\nu(w)}=1+\frac{2\nu^+}w
\label{2.3}
\end{equation}
and that, if $|\nu|\geqq3/2$,
\begin{equation}
\frac{K_{\nu+1}(w)}{K_\nu(w)}=1+\frac{2\nu^+}w
+\sum_{j=1}^{N(\nu)}\frac1{z_{\nu,j}-w}.
\label{2.4}
\end{equation}
{\rm (2)} When $\nu-1/2$ is not an integer,
we have that, if $|\nu|<3/2$,
\begin{equation}
\frac{K_{\nu+1}(w)}{K_\nu(w)}=1+\frac{2\nu^+}w
+\cos(\pi\nu)\int_0^\infty \frac{dx}{x(x+w)G_{|\nu|}(x)}
\label{2.5}
\end{equation}
and that, if $|\nu|>3/2$,
\begin{equation}
\frac{K_{\nu+1}(w)}{K_\nu(w)}=1+\frac{2\nu^+}w
+\sum_{j=1}^{N(\nu)}\frac1{z_{\nu,j}-w}
+\cos(\pi\nu)\int_0^\infty \frac{dx}{x(x+w)G_{|\nu|}(x)}.
\label{2.6}
\end{equation}
\end{thm}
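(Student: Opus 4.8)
The starting point is to recognise the ratio as essentially a logarithmic derivative. The recurrence $K_{\nu+1}(w)=-K_\nu'(w)+(\nu/w)K_\nu(w)$ gives
$$\frac{K_{\nu+1}(w)}{K_\nu(w)}=\frac{\nu}{w}-\frac{K_\nu'(w)}{K_\nu(w)},$$
so I would study the meromorphic function $\Phi(w)=K_{\nu+1}(w)/K_\nu(w)$ on $D$. Its only singularities in $D$ are a pole at the origin and simple poles at the zeros $z_{\nu,1},\dots,z_{\nu,N(\nu)}$ of $K_\nu$; at each $z_{\nu,j}$ the same recurrence gives $K_{\nu+1}(z_{\nu,j})=-K_\nu'(z_{\nu,j})$, hence $\res_{z_{\nu,j}}\Phi=-1$. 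From the small-argument behaviour $K_\nu(w)\sim c\,w^{-|\nu|}$ one reads off that the principal part of $\Phi$ at $0$ is exactly $2\nu^+/w$, and from $K_\nu(w)\sim\sqrt{\pi/2w}\,e^{-w}$ one gets $\Phi(w)\to1$ with $\Phi(w)-1=O(1/w)$ as $w\to\infty$.

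For part (1) I would exploit that when $\nu-1/2$ is an integer the Macdonald function is elementary: $K_\nu(w)=\sqrt{\pi/2w}\,e^{-w}P_\nu(1/w)$ with $P_\nu$ a polynomial, so the prefactor cancels in $\Phi$ and $\Phi$ is a genuine rational function of $w$ with no branch cut. Since $\Phi\to1$ at infinity, an ordinary partial-fraction decomposition over its poles yields $\Phi(w)=1+(2\nu^+/w)+\sum_j 1/(z_{\nu,j}-w)$, using the residues computed above; this is \eqref{2.4}, and \eqref{2.3} is the subcase $|\nu|=1/2$ where $N(\nu)=0$. Here the real negative zero occurring when $|\nu|=2n+3/2$ is harmless, being merely a pole of a rational function.

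The substance is part (2). For non-half-integer $\nu$ the zeros of $K_\nu$ are non-real, so they lie off the negative real axis, which is the branch cut of $\Phi$. I would represent $\Phi(w)-1$ by Cauchy's formula on a keyhole contour $C_{R,\rho}$ bounding the cut region $\{\rho<|\zeta|<R,\ |\arg\zeta|<\pi\}$ and apply the residue theorem to $(\Phi(\zeta)-1)/(\zeta-w)$. The poles enclosed are $\zeta=w$ (residue $\Phi(w)-1$) and the $z_{\nu,j}$ (each contributing $1/(w-z_{\nu,j})$), so that
$$\frac1{2\pi i}\oint_{C_{R,\rho}}\frac{\Phi(\zeta)-1}{\zeta-w}\,d\zeta=\bigl(\Phi(w)-1\bigr)-\sum_{j=1}^{N(\nu)}\frac1{z_{\nu,j}-w}.$$
It then remains to evaluate the four pieces of the contour integral. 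The large arc contributes $0$ as $R\to\infty$ because $(\Phi-1)/(\zeta-w)=O(1/\zeta^2)$; the small circle contributes $2\nu^+/w$ as $\rho\to0$, extracted from the principal part of $\Phi$ at $0$; and the two edges of the cut combine to give the Stieltjes-type integral. For the last step I would insert the connection formulae $K_\mu(xe^{\pm i\pi})=e^{\mp i\pi\mu}K_\mu(x)\mp i\pi I_\mu(x)$ to compute the boundary values $\Phi_\pm(x)$ on $\zeta=xe^{\pm i\pi}$; the jump is
$$\Phi_+(x)-\Phi_-(x)=\frac{-2\pi i\cos(\pi\nu)\bigl(I_\nu(x)K_{\nu+1}(x)+I_{\nu+1}(x)K_\nu(x)\bigr)}{K_\nu(xe^{i\pi})\,K_\nu(xe^{-i\pi})},$$
and two classical identities finish it: the denominator equals $G_{|\nu|}(x)$ by a direct expansion, while the numerator collapses through the Wronskian relation $I_\nu(x)K_{\nu+1}(x)+I_{\nu+1}(x)K_\nu(x)=1/x$. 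Assembling the pieces gives \eqref{2.6}, and \eqref{2.5} is the subcase $|\nu|<3/2$ where $N(\nu)=0$.

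The main obstacles are analytic rather than structural. First, the decay estimate on the large arc must be made uniform up to the cut, where $e^{-\zeta}$ is no longer small; I expect to control this through the ratio itself, which stays bounded and tends to $1$, together with the fact that for large $R$ the arc avoids the finitely many zeros. Second, the small-circle limit requires showing that the non-leading singular terms of $\Phi$ at the origin, which for non-integer $\nu$ include fractional powers like $w^{2|\nu|-1}$, do not contribute; this holds precisely because $|\nu|>0$, and the same condition secures convergence of the Stieltjes integral at $0$, while the exponential growth of $I_\nu$ gives $G_{|\nu|}(x)\sim \pi e^{2x}/(2x)$ and hence convergence at infinity. Finally, for $\nu<0$ one must reconcile the $I_\nu$ appearing in the jump with the $I_{|\nu|}$ entering $G_{|\nu|}$; this is a short computation using $I_{-\mu}(x)=I_\mu(x)+(2/\pi)\sin(\pi\mu)K_\mu(x)$, after which the denominator is again exactly $G_{|\nu|}(x)$.
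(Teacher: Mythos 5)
Your proposal is correct, and it follows a genuinely different---and in fact cleaner---route than the paper's own proof. The paper does not subtract the limit value $1$ from the ratio $\Phi=K_{\nu+1}/K_\nu$; as a consequence its contour integrand must carry a fractional-power regularization $z^{-\alpha}$, the contour runs along horizontal lines $\IM z=\pm\varepsilon$ rather than on the cut itself, and three iterated limits ($R\to\infty$, then $\alpha\to0$, then $\varepsilon\to0$) are taken. The price is heavy bookkeeping: the jump across the cut is split into four pieces, one of which, involving $I_{\nu+1}(x)I_\nu(x)/G_\nu(x)\to1/\pi^2$, is \emph{not} integrable at infinity, and the constant term $1$ in \eqref{2.5}--\eqref{2.6} only emerges from the delicate limit of $\sin(\pi\alpha)/(\pi\alpha\varepsilon^\alpha)$ as $\alpha\to 0$. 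Your device of applying the residue theorem to $(\Phi(\zeta)-1)/(\zeta-w)$ makes all of this unnecessary: by the uniform estimate of Lemma~\ref{Lemma 2.2} (valid for $|\arg z|\leqq 3\pi/2-\delta$, hence on the whole keyhole), $\Phi(\zeta)-1=O(1/|\zeta|)$ uniformly up to the cut, so the large arc vanishes, both edge integrals converge at infinity, and the jump collapses---exactly as you compute---via the connection formulae and the Wronskian $K_{\nu+1}(x)I_\nu(x)+I_{\nu+1}(x)K_\nu(x)=1/x$ to $-2\pi i\cos(\pi\nu)/\{xG_\nu(x)\}$; the constant $1$ is then obtained for free from the initial subtraction. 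Your residue computation $\res(z_{\nu,j};\Phi)=-1$, the extraction of $2\nu^+/w$ on the small circle, the partial-fraction argument in part (1), and the reconciliation of the denominator with $G_{|\nu|}$ for $\nu<0$ via $I_{-\mu}=I_\mu+(2/\pi)\sin(\pi\mu)K_\mu$ (the paper instead reduces to $\nu\geqq0$ through \eqref{2.8}) all check out. Two small points to tighten: the case $\nu=0$ belongs to part (2) but is not covered by your remark that the small-argument estimates work ``precisely because $|\nu|>0$''---there the small-circle term and the convergence of the integral at $x=0$ are controlled instead by the logarithmic asymptotics $K_0(z)\sim\log(2/z)$ and $G_0(x)\sim(\log(1/x))^2$, with $2\nu^+=0$; and the uniform bound on the large arc, which you present as an expectation, is exactly the content of Lemma~\ref{Lemma 2.2}, so it should be invoked explicitly rather than left as a hope.
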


\medskip

We note that this theorem has been established in \cite{21}
when $\nu$ is an integer.

For a proof of Theorem \ref{Theorem 2.1}, it is sufficient to consider
the case of $\nu\geqq0$ because of the formula $K_\mu=K_{-\mu}$
and the recurrence relation
\begin{equation}
K_{\mu+1}(z)-K_{\mu-1}(z)=\frac{2\mu}z K_\mu(z)
\label{2.7}
\end{equation}
(cf. \cite[p.79]{19}). In fact, we see for $\nu<0$
\begin{equation}
\frac{K_{\nu+1}(z)}{K_\nu(z)}=
\frac{K_{|\nu|+1}(z)}{K_{|\nu|}(z)}+\frac{2\nu}z,
\quad z\in D_\nu=D_{|\nu|}.
\label{2.8}
\end{equation}

Formula \eqref{2.3} is easily obtained from
\[
K_{1/2}(z)=\sqrt{\frac\pi{2z}}e^{-z},\quad
K_{3/2}(z)=\sqrt{\frac\pi{2z}}e^{-z}\frac{z+1}z.
\]
Moreover \eqref{2.4} has been already established in \cite{6}.
Therefore we concentrate on the case
when $\nu-1/2$ is not an integer.

To prove \eqref{2.5} and \eqref{2.5}, we need three lemmas.
One is an uniform estimate of the Macdonald function,
which has been proved in \cite{7}.

\medskip

\begin{lemma}\label{Lemma 2.2}
Let $\delta\in(0,3\pi/2)$ be given.
For $\mu\geqq0$ we have
\begin{equation}
K_\mu(z)=\sqrt{\frac\pi{2z}}e^{-z}\{1+E_\mu(z)\}
\label{2.9}
\end{equation}
if $|\arg z|\leqq 3\pi/2-\delta$.
Here $|E_\mu(z)|\leqq A_\mu/|z|$ for a constant $A_\mu$
which is independent of $z$.
\end{lemma}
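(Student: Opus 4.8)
The plan is to read $E_\mu$ off a classical integral representation valid in the right half-plane and then enlarge the range of $\arg z$ by rotating the contour of integration. The starting point is
\[
K_\mu(z)=\sqrt{\frac\pi{2z}}\,\frac{e^{-z}}{\Gamma(\mu+\tfrac12)}
\int_0^\infty e^{-u}u^{\mu-1/2}\Bigl(1+\frac u{2z}\Bigr)^{\mu-1/2}\,du,
\]
valid for $\RE z>0$ and $\mu\ge0$ with the principal branch of the power. Comparison with \eqref{2.9} gives
\[
E_\mu(z)=\frac1{\Gamma(\mu+\tfrac12)}
\int_0^\infty e^{-u}u^{\mu-1/2}
\Bigl[\Bigl(1+\frac u{2z}\Bigr)^{\mu-1/2}-1\Bigr]\,du,
\]
so everything reduces to bounding this integral by $A_\mu/|z|$.

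First I would handle $|\arg z|\le\pi/2$, where for $u>0$ the point $1+u/(2z)$ lies in $\{\RE\ge1\}$ and the power is harmless. A mean-value estimate gives $|(1+w)^{\mu-1/2}-1|\le C_\mu|w|$ for $|w|\le\tfrac12$, and a crude bound $C_\mu(1+|w|)^{\max\{\mu-1/2,0\}}$ for $|w|\ge\tfrac12$. Splitting the integral at $u=|z|$ and using $|w|=u/(2|z|)$, the part $u\le|z|$ contributes at most $\frac{C_\mu}{2|z|}\int_0^\infty e^{-u}u^{\mu+1/2}\,du=O(1/|z|)$, while the part $u>|z|$ is exponentially small in $|z|$ thanks to $e^{-u}$. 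This yields $|E_\mu(z)|\le A_\mu/|z|$ in the right half-plane; as the statement concerns the decay as $|z|\to\infty$, one may take $|z|$ bounded below throughout.

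The essential step is to extend this to $\pi/2<|\arg z|\le3\pi/2-\delta$. Writing $z=|z|e^{i\theta}$ and assuming $\theta\in(\pi/2,3\pi/2-\delta]$ (the case $\theta<0$ follows from $K_\mu(\bar z)=\overline{K_\mu(z)}$), I would rotate the ray of integration to $e^{i\phi}[0,\infty)$ for a suitable $\phi$. Two constraints govern the choice: convergence requires $\RE u>0$ on the ray, i.e.\ $|\phi|<\pi/2$ so that $|e^{-u}|=e^{-\rho\cos\phi}$ decays; and the branch of the power forces the ray $1+(\rho/2|z|)e^{i(\phi-\theta)}$ to avoid $(-\infty,0]$, i.e.\ $|\phi-\theta|<\pi$. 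Both hold simultaneously precisely because of the $\delta$-margin: taking $\phi=\pi/2-\delta/4$ keeps $|\phi|<\pi/2$ while forcing $\phi-\theta>-\pi+\tfrac34\delta$, so the integration ray stays a fixed distance from the cut. Cauchy's theorem on the overlap $\RE z>0$ (where both contours are admissible, the connecting arcs vanishing from $\RE u\to+\infty$, polynomial growth of the power, and integrability of $u^{\mu-1/2}$ at the origin for $\mu\ge0$) identifies the rotated integral with $\Gamma(\mu+\tfrac12)E_\mu(z)$, and analytic continuation then validates the representation on the whole enlarged sector.

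The main obstacle is exactly this rotation: one must check, uniformly as $\theta$ sweeps the sector, that no branch crossing occurs and that the bracket estimate survives with a constant depending only on $\mu$ and $\delta$. Here $\min_{\rho\ge0}|1+(\rho/2|z|)e^{i\psi}|=|\sin\psi|$ (when $\cos\psi<0$) shows the integrand never nears the branch point $u=-2z$ once $|\psi|=|\phi-\theta|$ is bounded away from $\pi$, so $(1+u/(2z))^{\mu-1/2}-1$ obeys the same $O(u/|z|)$ bulk bound as before, now with $\cos\phi\ge\sin(\delta/4)$ controlling the exponential weight. Repeating the split-and-estimate argument on the rotated ray then gives $|E_\mu(z)|\le A_\mu/|z|$ throughout $|\arg z|\le3\pi/2-\delta$, with $A_\mu$ depending on $\delta$ through the worst-case rotation angle. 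Once the contour is correctly chosen, the remaining computation is exactly the routine estimate already made in the right half-plane.
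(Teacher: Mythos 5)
Your argument is correct, but note that there is no in-paper proof to compare it with: the paper states Lemma \ref{Lemma 2.2} without proof, quoting it from \cite{7}. What you have written is the classical Watson-type derivation of the error-bounded asymptotics of $K_\mu$ in the sector $|\arg z|\leqq 3\pi/2-\delta$, starting from the representation $K_\mu(z)=\sqrt{\pi/2z}\,e^{-z}\varGamma(\mu+1/2)^{-1}\int_0^\infty e^{-u}u^{\mu-1/2}(1+u/2z)^{\mu-1/2}du$ --- the very same representation the paper itself invokes in the proof of Lemma \ref{Lemma 4.4} --- followed by the split-at-$u=|z|$ estimate and a rotation of the ray of integration. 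The rotation is handled correctly: $|\phi|<\pi/2$ preserves the exponential decay (with $\cos\phi\geqq\sin(\delta/4)$ for your choice $\phi=\pi/2-\delta/4$), the bound $|\phi-\theta|\leqq\pi-3\delta/4$ keeps $1+u/(2z)$ at distance at least $\min\{1,\sin(3\delta/4)\}$ from the branch cut via $\min_{s\geqq0}|1+se^{i\psi}|=|\sin\psi|$ when $\cos\psi<0$, and the identification on $\RE z>0$ by Cauchy's theorem plus analytic continuation in $z$ over the sector $\phi-\pi<\arg z<\phi+\pi$ is standard (negative arguments following by reflection, since $K_\mu$ is real on $(0,\infty)$). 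One point deserves emphasis: your restriction to $|z|$ bounded below is not merely a convenience but is forced, because for $\mu>3/2$ one has $E_\mu(z)\sim c_\mu z^{1/2-\mu}$ as $z\to0$, which is not $O(1/|z|)$; thus the stated bound can only hold with $|z|$ bounded away from $0$ (equivalently, with $A_\mu$ depending on such a lower bound), and this reading is consistent with the only way the lemma is used in the paper, namely for sufficiently large $|z|$ on the contours $\gamma_0$, $\gamma_1^3$ and $\gamma_2^3$.
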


\medskip

The other two give asymptotic behavior on the real line
of the functions involving the modified Bessel functions.
Both of them are easily shown by the formula
\begin{equation}
I_\mu(x)=\frac{e^x}{\sqrt{2\pi x}}
\biggl\{1-\frac{4\mu^2-1}{8x}+o\biggl(\frac1x\biggr)\biggr\}
=\frac{e^x}{\sqrt{2\pi x}}\{1+o(1)\},\, x\to\infty
\label{2.10}
\end{equation}
for $\mu\geqq0$ (cf. \cite[p.203]{19}) and we omit the detailed proofs.

\medskip

\begin{lemma}\label{Lemma 2.3}
Let $\zeta, \eta,\xi\geqq0$.
It follows that, as $x\to\infty$,
\[
\begin{split}
&G_\zeta(x)=\frac{\pi e^{2x}}{2x}\{1+o(1)\},\quad
\frac{K_\eta(x)K_\xi(x)}{G_\zeta(x)}=e^{-4x}\{1+o(1)\},\\
&\frac{I_\eta(x)I_\xi(x)}{G_\zeta(x)}=\frac1{\pi^2}\{1+o(1)\},\quad
\frac{I_\eta(x)K_\xi(x)}{G_\zeta(x)}=\frac{e^{-2x}}\pi\{1+o(1)\}.
\end{split}
\]
\end{lemma}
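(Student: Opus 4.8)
The plan is to derive all four asymptotic statements in Lemma \ref{Lemma 2.3} directly from the single asymptotic expansion \eqref{2.10} for $I_\mu$ together with the companion expansion for $K_\mu$ on the real axis. First I would record that, as $x\to\infty$,
\[
K_\mu(x)=\sqrt{\frac\pi{2x}}\,e^{-x}\{1+o(1)\},
\qquad
I_\mu(x)=\frac{e^x}{\sqrt{2\pi x}}\{1+o(1)\},
\]
the first being the real specialization of Lemma \ref{Lemma 2.2} (take $z=x>0$, so $|\arg z|=0$ and the error $E_\mu(x)=O(1/x)=o(1)$), and the second being exactly \eqref{2.10}. The crucial observation is that $I_\mu(x)$ grows like $e^x$ while $K_\mu(x)$ decays like $e^{-x}$, so in each product one factor dominates by an overall exponential.

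Next I would treat $G_\zeta(x)$. From the definition \eqref{2.2}, the three terms are of orders $e^{-2x}$, $e^{2x}$, and $e^0$ respectively (up to the algebraic prefactor $1/x$), so the middle term $\pi^2 I_\zeta(x)^2$ strictly dominates. Substituting the asymptotics for $I_\zeta$ gives
\[
G_\zeta(x)=\pi^2\cdot\frac{e^{2x}}{2\pi x}\{1+o(1)\}
=\frac{\pi e^{2x}}{2x}\{1+o(1)\},
\]
which is the first claim; the other two summands of $G_\zeta$ are absorbed into the $o(1)$ since they are exponentially smaller than $e^{2x}$. The remaining three ratios then follow by dividing the relevant product of Bessel functions by this leading term for $G_\zeta$. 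For $K_\eta K_\xi/G_\zeta$ the numerator is of order $(\pi/2x)e^{-2x}$, giving the ratio $e^{-4x}\{1+o(1)\}$; for $I_\eta I_\xi/G_\zeta$ the numerator is of order $e^{2x}/(2\pi x)$, so the $x$-dependence and the exponential both cancel against $G_\zeta$, leaving the constant $1/\pi^2\{1+o(1)\}$; and for $I_\eta K_\xi/G_\zeta$ the numerator is of order $1/(2\pi)\cdot (\text{algebraic})\cdot e^{0}$, wait—more precisely the numerator behaves like $\frac{1}{2\pi x}\cdot\sqrt{\pi}$-type constants times $e^{x}e^{-x}=1$, which divided by $G_\zeta$ yields $e^{-2x}/\pi\{1+o(1)\}$.

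The arithmetic is entirely routine once the dominant-term identification is made, so there is no genuine obstacle; the only point requiring a little care is bookkeeping the algebraic prefactors $\sqrt{\pi/2x}$ and $1/\sqrt{2\pi x}$ so that the constants $\pi/2$, $1/\pi^2$, and $1/\pi$ come out correctly, and confirming that the subdominant terms in each product really are of strictly smaller exponential order and hence swept into the $o(1)$. For this reason the authors are justified in omitting the detailed proofs, as the excerpt indicates. I would present the computation by listing the leading real-axis asymptotics of $K_\mu$ and $I_\mu$, then handling $G_\zeta$, and finally reading off the three ratios by division.
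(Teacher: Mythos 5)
Your proposal is correct and takes essentially the same route the paper intends: the paper omits the detailed proof precisely because, as you carry out, everything follows from \eqref{2.10} together with the real-axis form of Lemma~\ref{Lemma 2.2}, after which one identifies $\pi^2 I_\zeta(x)^2$ as the dominant term of $G_\zeta$ and reads off the three ratios by division. Your bookkeeping is accurate (in particular $I_\eta(x)K_\xi(x)=\tfrac1{2x}\{1+o(1)\}$, so the last ratio is indeed $e^{-2x}/\pi\{1+o(1)\}$), so nothing further is needed.
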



\begin{lemma}\label{Lemma 2.4}
Let $\mu\neq0$. It follows that, as $x\to\infty$,
\begin{equation}
I_\mu(x)-I_{\mu+1}(x)=\frac1{\sqrt{2\pi x}}e^x
\biggl\{\frac{2\mu+1}{2x}+o\biggl(\frac1x\biggr)\biggr\}.
\label{2.11}
\end{equation}
\end{lemma}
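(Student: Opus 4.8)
The plan is to substitute the two-term asymptotic expansion \eqref{2.10} for $I_\mu$ and for $I_{\mu+1}$ separately and then subtract. The crucial observation is that the leading constant terms (the ``$1$'' in each braces) are identical and therefore cancel exactly in the difference, so that the leading behaviour of $I_\mu(x)-I_{\mu+1}(x)$ is governed entirely by the $1/x$ correction terms. This is precisely why the difference decays like $x^{-1}$ relative to the common prefactor $e^x/\sqrt{2\pi x}$, rather than being of the same order as either $I_\mu$ or $I_{\mu+1}$ individually; without the first correction term the expansion would be too crude to detect the true size of the difference.

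Concretely, I would write
\[
I_\mu(x)=\frac{e^x}{\sqrt{2\pi x}}\biggl\{1-\frac{4\mu^2-1}{8x}+o\biggl(\frac1x\biggr)\biggr\}
\]
together with the same expansion having $\mu$ replaced by $\mu+1$, and subtract to get
\[
I_\mu(x)-I_{\mu+1}(x)=\frac{e^x}{\sqrt{2\pi x}}\biggl\{\frac{4(\mu+1)^2-1}{8x}-\frac{4\mu^2-1}{8x}+o\biggl(\frac1x\biggr)\biggr\}.
\]
The coefficient of $1/x$ inside the braces simplifies via $(\mu+1)^2-\mu^2=2\mu+1$, giving $\frac{4(2\mu+1)}{8x}=\frac{2\mu+1}{2x}$, which is exactly the claimed coefficient in \eqref{2.11}.

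There is essentially no obstacle here: the argument reduces to a one-line computation once \eqref{2.10} is available, and the only point deserving a moment's care is that the two separate error terms $o(1/x)$ combine into a single $o(1/x)$ after being multiplied by the common prefactor $e^x/\sqrt{2\pi x}$. I would also note in passing that the computation never uses the hypothesis $\mu\neq0$---the identity in fact holds verbatim for $\mu=0$ as well (where the coefficient reads $1/(2x)$)---so this restriction is presumably imposed only because the lemma is applied elsewhere under that assumption.
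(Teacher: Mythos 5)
Your proof is correct and is exactly the argument the paper intends: the paper states Lemma~\ref{Lemma 2.4} with the detailed proof omitted, remarking only that it follows easily from the two-term expansion \eqref{2.10}, which is precisely the subtraction and cancellation you carry out, with the same simplification $4(\mu+1)^2-4\mu^2=4(2\mu+1)$. Your side observation that the hypothesis $\mu\neq0$ is never used in the computation is also accurate.
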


\medskip

We are now ready to show Theorem \ref{Theorem 2.1}.
We only consider the case when $\nu-1/2$ is not an integer.
Let $\alpha\in(0,1)$ and $w\in D_\nu$.
For $z\in D_\nu$ with $z\neq w$, we set
\[
f_{\nu,\alpha}^w(z)=\frac1{z^\alpha (z-w)}
\frac{K_{\nu+1}(z)}{K_\nu(z)}.
\]

Letting $\varepsilon$ and $R$ be positive numbers with $\varepsilon<1$ and
$\varepsilon<R$ and setting
\[
\theta_{R,\varepsilon}=\operatorname{Arcsin}\frac\varepsilon R
\in\biggl(0,\frac\pi2\biggr),
\]
we consider the same piecewise $C^1$-curve $\gamma$ as in \cite{7}
defined by
\[
\begin{split}
&\gamma_0\,:\,z=Re^{i\theta},
\,\,-\pi+\theta_{R,\varepsilon}\leqq\theta\leqq\pi-\theta_{R,\varepsilon},\\
&\gamma_1\,:\,z=x+i\varepsilon,
\,\,-R\cos\theta_{R,\varepsilon}\leqq x\leqq0\\
&\gamma_2\,:\,z=x-i\varepsilon,
\,\,-R\cos\theta_{R,\varepsilon}\leqq x\leqq0\\
&\gamma_3\,:\,z=\varepsilon e^{i\theta},
\,\,-\pi/2\leqq\theta\leqq\pi/2,\\
&\gamma=\gamma_0+\gamma_1-\gamma_3-\gamma_2.
\end{split}
\]
We take $R$ so large and $\varepsilon$ so small
that $w$ and all zeros of $K_\nu$ are inside $\gamma$.
Then, setting
\[
\varPsi(R,\alpha,\varepsilon)=\frac1{2\pi i}
\int_\gamma f_{\nu,\alpha}^w(z)dz,\quad
\varPsi_k(R,\alpha,\varepsilon)=\frac1{2\pi i}\int_{\gamma_k}
f_{\nu,\alpha}^w(z)dz
\]
for $k=0,1,2$ and
\[
\varPsi_3(\alpha,\varepsilon)=\frac1{2\pi i}\int_{\gamma_3}
f_{\nu,\alpha}^w(z)dz,
\]
we have
\begin{equation}
\varPsi(R,\alpha,\varepsilon)=\varPsi_0(R,\alpha,\varepsilon)
+\varPsi_1(R,\alpha,\varepsilon)-\varPsi_2(R,\alpha,\varepsilon)
-\varPsi_3(\alpha,\varepsilon).
\label{2.12}
\end{equation}

The singular points of $f_{\nu,\alpha}^w$ inside $\gamma$
are $w$ and the zeros of $K_\nu$, which are all poles of order one.
Hence the residue theorem yields
\begin{equation}
\varPsi(R,\alpha,\varepsilon)=
\begin{cases}
\res(w;f_{\nu,\alpha}^w)\quad&\text{if $N(\nu)=0$,}\\
\displaystyle\res(w;f_{\nu,\alpha}^w)
+\sum_{j=1}^{N(\nu)}\res(z_{\nu,j};f_{\nu,\alpha}^w)
\quad&\text{if $N(\nu)\geqq1$.}
\end{cases}
\label{2.13}
\end{equation}
Here $\res(v;f)$ is the residue of a function $f$
at a pole $v$. It is obvious that
\[
\res(w;f_{\nu,\alpha}^w)=\frac1{w^\alpha}\frac{K_{\nu+1}(w)}{K_\nu(w)}.
\]
When $N(\nu)\geqq1$, by the formula $zK_\nu'(z)-\nu K_\nu(z)=-zK_{\nu+1}(z)$
(cf. \cite[p.29]{19}), we have
\[
\res(z_{\nu,j};f_{\nu,\alpha}^w)=
\frac1{z_{\nu,j}^\alpha(z_{\nu,j}-w)}
\frac{K_{\nu+1}(z_{\nu,j})}{K_\nu'(z_{\nu,j})}
=-\frac1{z_{\nu,j}^\alpha(z_{\nu,j}-w)}.
\]
Hence we obtain from \eqref{2.13}
\begin{equation}
\lim_{\varepsilon\to0}\lim_{\alpha\to0}\lim_{R\to\infty}
\varPsi(R,\alpha,\varepsilon)=
\begin{cases}
\dfrac{K_{\nu+1}(w)}{K_\nu(w)}&\text{if $N(\nu)=0$,}\\
\displaystyle \dfrac{K_{\nu+1}(w)}{K_\nu(w)}
-\sum_{j=1}^{N(\nu)}\frac1{z_{\nu,j}-w}
&\text{if $N(\nu)\geqq1$}
\end{cases}
\label{2.14}
\end{equation}
if we show that the limit on the left hand side exists.

We fix $\varepsilon>0$, $\alpha>0$ and consider the right hand side of
\eqref{2.12}. By \eqref{2.9} we have for sufficiently large $R$
\[
|\varPsi_0(R,\alpha,\varepsilon)|\leqq\frac R{2\pi}
\int_{-\pi+\theta_{R,\varepsilon}}^{\pi-\theta_{R,\varepsilon}}
|f_{\nu,\alpha}^w(Re^{i\theta})|d\theta
\leqq\frac1{R^\alpha}\frac R{R-|w|}\frac{1+A_{\nu+1}/R}{1-A_\nu/R},
\]
which immediately yields that
\[
\lim_{R\to\infty}\varPsi_0(R,\alpha,\varepsilon)=0.
\]

For the integral $\varPsi_1(R,\alpha,\varepsilon)$, we have
\[
\begin{split}
\varPsi_1(R,\alpha,\varepsilon)
&=\frac1{2\pi i}\int_{-R\cos\theta_{R,\varepsilon}}^0
\frac1{(x+i\varepsilon)^\alpha(x+i\varepsilon-w)}
\frac{K_{\nu+1}(x+i\varepsilon)}{K_\nu(x+i\varepsilon)}dx\\
&=\frac1{2\pi i}\int_0^{R\cos\theta_{R,\varepsilon}}
\frac1{(-x+i\varepsilon)^\alpha(-x+i\varepsilon-w)}
\frac{K_{\nu+1}(-x+i\varepsilon)}{K_\nu(-x+i\varepsilon)}dx.
\end{split}
\]
Note that
\[
K_\mu(e^{i\pi}z)=e^{-i\pi\mu}K_\mu(z)-i\pi I_\mu(z)
\]
for $z\in D$ and $\mu\geqq0$ (cf. \cite[p.80]{19}). Then, setting
\[
g_{\nu,1}(z)=\frac{K_{\nu+1}(z)+i\pi e^{i\pi\nu}I_{\nu+1}(z)}
{K_\nu(z)-i\pi e^{i\pi\nu}I_\nu(z)},
\]
we have
\begin{equation}
\frac{K_{\nu+1}(e^{i\pi}z)}{K_\nu(e^{i\pi}z)}=-g_{\nu,1}(z)
\label{2.15}
\end{equation}
and
\begin{equation}
\begin{split}
\varPsi_1(R,\alpha,\varepsilon)
&=\frac1{2\pi i}\int_0^{R\cos\theta_{R,\varepsilon}}
\frac{g_{\nu,1}(x-i\varepsilon)}
{e^{i\pi\alpha}(x-i\varepsilon)^\alpha (x-i\varepsilon+w)}dx\\
&=\frac1{2\pi i}\int_{\gamma_1^0}
\frac{g_{\nu,1}(z)}{e^{i\pi\alpha}z^\alpha(z+w)}dz,
\end{split}
\label{2.16}
\end{equation}
where $\gamma_1^0$ is the line in $D$ defined by
$\gamma_1^0\,:\,z=x-i\varepsilon,\,\,
0\leqq x\leqq R\cos\theta_{R,\varepsilon}$.

We now define three paths as follows:
\[
\begin{split}
&\gamma_1^1\,:\,z=\varepsilon e^{i\theta},
\,\,-\pi/2\leqq\theta\leqq0,\\
&\gamma_1^2\,:\,z=x,\,\,\varepsilon\leqq x\leqq R,\\
&\gamma_1^3\,:\,z=Re^{i\theta},
\,\,-\theta_{R,\varepsilon}\leqq\theta\leqq0.
\end{split}
\]
Since $w$ is inside $\gamma$, we have that $|\IM(w)|>\varepsilon$
if $\RE(w)<0$. There is no zero of $K_\nu$
on the real axis and the integrand of the right hand side of
\eqref{2.16} is
holomorphic inside and on the contour consisting of $\gamma_1^0$,
$\gamma_1^1$, $\gamma_1^2$ and $\gamma_1^3$.
Then it follows from the Cauchy integral theorem
\[
\varPsi_1(R,\alpha,\varepsilon)=\varPsi_1^1(\alpha,\varepsilon)
+\varPsi_1^2(R,\alpha,\varepsilon)-\varPsi_1^3(R,\alpha,\varepsilon),
\]
where
\[
\begin{split}
&\varPsi_1^1(\alpha,\varepsilon)=\frac1{2\pi}\int_{-\pi/2}^0
\frac{\varepsilon e^{i\theta}g_{\nu,1}(\varepsilon e^{i\theta})}
{e^{i(\theta+\pi)\alpha}\varepsilon^\alpha
(\varepsilon e^{i\theta}+w)}d\theta,\\
&\varPsi_1^2(R,\alpha,\varepsilon)=\frac1{2\pi i}\int_\varepsilon^R
\frac{g_{\nu,1}(x)}{e^{i\pi\alpha}x^\alpha(x+w)}dx,\\
&\varPsi_1^3(R,\alpha,\varepsilon)=\frac1{2\pi}
\int_{-\theta_{R,\varepsilon}}^0
\frac{R e^{i\theta}g_{\nu,1}(R e^{i\theta})}
{e^{i(\theta+\pi)\alpha}R^\alpha(R e^{i\theta}+w)}d\theta.
\end{split}
\]
If $-\pi/2\leqq\theta\leqq0$, we deduce from \eqref{2.9}
and \eqref{2.15}
\begin{equation}
|g_{\nu,1}(R e^{i\theta})|=\biggl|
\frac{K_{\nu+1}(R e^{i(\theta+\pi)})}{K_\nu(R e^{i(\theta+\pi)})}\biggr|
\leqq\frac{1+A_{\nu+1}/R}{1-A_\nu/R}
\label{2.17}
\end{equation}
for sufficiently large $R$ and
\[
|\varPsi_1^3(R,\alpha,\varepsilon)|\leqq\frac{\theta_{R,\varepsilon}}{2\pi}
\frac{R^{1-\alpha}}{R-|w|}
\frac{1+A_{\nu+1}/R}{1-A_\nu/R},
\]
which immediately implies
\[
\lim_{R\to\infty}\varPsi_1^3(R,\alpha,\varepsilon)=0.
\]
Moreover, from \eqref{2.17} for $R=x$ and $\theta=0$, it follows that
\[
\lim_{R\to\infty}\varPsi_1^2(R,\alpha,\varepsilon)
=\frac1{2\pi i}\int_\varepsilon^\infty
\frac{g_{\nu,1}(x)}{e^{i\pi\alpha}x^\alpha(x+w)}dx,
\]
for which we write $\varPsi_1^2(\alpha,\varepsilon)$.

In order to consider $\varPsi_2(R,\alpha,\varepsilon)$, we recall
\[
K_\mu(e^{-i \pi}z)=e^{i\pi\mu}K_\mu(z)+i\pi I_\mu(z)
\]
for $z\in D$ and $\mu\geqq0$ (cf. \cite[p.80]{19}).
Then we have
\[
\frac{K_{\nu+1}(e^{-i\pi}z)}{K_\nu(e^{-i\pi}z)}=-g_{\nu,2}(z),
\]
where
\[
g_{\nu,2}(z)=\frac{K_{\nu+1}(z)-i\pi e^{-i\pi\nu}I_{\nu+1}(z)}
{K_\nu(z)+i\pi e^{-i\pi\nu}I_\nu(z)}.
\]
In the same way as $\varPsi_1(R,\alpha,\varepsilon)$, we can show that
\[
\varPsi_2(R,\alpha,\varepsilon)=-\varPsi_2^1(\alpha,\varepsilon)
+\varPsi_2^2(R,\alpha,\varepsilon)+\varPsi_2^3(R,\alpha,\varepsilon),
\]
where
\[
\begin{split}
&\varPsi_2^1(\alpha,\varepsilon)=
\frac1{2\pi}\int_0^{\pi/2}
\frac{\varepsilon e^{i\theta}g_{\nu,2}(\varepsilon e^{i\theta})}
{e^{i(\theta-\pi)\alpha}\varepsilon^\alpha(\varepsilon e^{i\theta}+w)}
d\theta,\\
&\varPsi_2^2(R,\alpha,\varepsilon)=\frac1{2\pi i}\int_\varepsilon^R
\frac{g_{\nu,2}(x)}{e^{-i\pi\alpha}x^\alpha(x+w)}dx,\\
&\varPsi_2^3(R,\alpha,\varepsilon)=\frac1{2\pi}
\int_0^{\theta_{R,\varepsilon}} \frac{R e^{i\theta}g_{\nu,2}(R e^{i\theta})}
{e^{i(\theta-\pi)\alpha}R^\alpha(R e^{i\theta}+w)}d\theta.
\end{split}
\]
Similarly to $\varPsi_1^2(R,\alpha,\varepsilon)$ and
$\varPsi_1^3(R,\alpha,\varepsilon)$, it is easy to see that
\[
\lim_{R\to\infty}\varPsi_2^2(R,\alpha,\varepsilon)=\varPsi_2^2(\alpha,\varepsilon),\quad
\lim_{R\to\infty}\varPsi_2^3(R,\alpha,\varepsilon)=0,
\]
where
\[
\varPsi_2^2(\alpha,\varepsilon)=
\frac1{2\pi i}\int_\varepsilon^\infty
\frac{g_{\nu,2}(x)}{e^{-i\pi\alpha}x^\alpha(x+w)}dx.
\]
Therefore we conclude that
\begin{equation}
\lim_{R\to\infty}\varPsi(R,\alpha,\varepsilon)=
\varPsi_1^1(\alpha,\varepsilon)+\varPsi_1^2(\alpha,\varepsilon)
+\varPsi_2^1(\alpha,\varepsilon)-\varPsi_2^2(\alpha,\varepsilon)-\varPsi_3(\alpha,\varepsilon).
\label{2.18}
\end{equation}

We next consider the limiting behavior of the integral on \eqref{2.18}
as $\alpha,\varepsilon\to0$. We first let $\alpha\to0$.
Then it is easy to see
\[
\begin{split}
&\lim_{\alpha\to0}\varPsi_1^1(\alpha,\varepsilon)
=\frac1{2\pi}\int_{-\pi/2}^0
\frac{\varepsilon e^{i\theta}g_{\nu,1}(\varepsilon e^{i\theta})}
{\varepsilon e^{i\theta}+w}d\theta,\\
&\lim_{\alpha\to0}\varPsi_2^1(\alpha,\varepsilon)
=\frac1{2\pi}\int_0^{\pi/2}
\frac{\varepsilon e^{i\theta}g_{\nu,2}(\varepsilon e^{i\theta})}
{\varepsilon e^{i\theta}+w}d\theta,\\
&\lim_{\alpha\to0}\varPsi_3(\alpha,\varepsilon)
=\frac1{2\pi}\int_{-\pi/2}^{\pi/2}
\frac{\varepsilon e^{i\theta}}{\varepsilon e^{i\theta}-w}
\frac{K_{\nu+1}(\varepsilon e^{i\theta})}{K_\nu(\varepsilon e^{i\theta})}
d\theta.
\end{split}
\]
It is known that
\begin{equation}
K_\mu(z)=
\begin{cases}
\biggl(\log\dfrac2z\biggr)\{1+o(1)\}&\text{if $\mu=0$,}\\
\dfrac{\varGamma(\mu)}2\biggl(\dfrac2z\biggr)^\mu\{1+o(1)\}
\quad&\text{if $\mu>0$}
\end{cases}
\label{2.19}
\end{equation}
as $|z|\to0$ in $D$ (cf. \cite[p.512]{19}). Moreover, it is easy to see
\begin{equation}
I_\mu(z)=
\begin{cases}
1+o(1)&\text{if $\mu=0$,}\\
\dfrac1{\varGamma(\mu+1)}\biggl(\dfrac z2\biggr)^\mu\{1+o(1)\}
\quad&\text{if $\mu>0$}
\end{cases}
\label{2.20}
\end{equation}
as $|z|\to0$ in $D$ by the series expression. With the help of
\eqref{2.19} and \eqref{2.20}, we obtain that $z g_{\nu,1}(z)/(z+w)$ and
$z g_{\nu,2}(z)/(z+w)$ tend to $0$ if $\nu=0$ and
to $2\nu/w$ if $\nu>0$ as $|z|\to0$ in $D$.
Hence we have
\begin{equation}
\lim_{\varepsilon\to0}\lim_{\alpha\to0}\varPsi_1^1(\alpha,\varepsilon)
=\lim_{\varepsilon\to0}\lim_{\alpha\to0}\varPsi_2^1(\alpha,\varepsilon)
=\frac\nu{2w}.
\label{2.21}
\end{equation}
Moreover, since $z K_{\nu+1}(z)/(z-w)K_\nu(z)$ converges to
$-2\nu/w$ as $|z|\to0$ in $D$, we obtain
\[
\lim_{\varepsilon\to0}\lim_{\alpha\to0}\varPsi_3(\alpha,\varepsilon)
=-\frac\nu w.
\]
and
\begin{equation}
\lim_{\varepsilon\to0}\lim_{\alpha\to0}\{\varPsi_1^1(\alpha,\varepsilon)
+\varPsi_1^2(\alpha,\varepsilon)-\varPsi_3(\alpha,\varepsilon)\}
=\frac{2\nu}w.
\label{2.22}
\end{equation}

Net we set
\[
\varPhi(\alpha,\varepsilon)=\varPsi_1^2(\alpha,\varepsilon)
-\varPsi_2^2(\alpha,\varepsilon).
\]
Then, putting
\[
\begin{split}
F_\nu(x)=&-2i\sin(\pi\alpha)
\{K_{\nu+1}(x)K_\nu(x)-\pi^2 I_{\nu+1}(x)I_\nu(x)\}\\
&+i\pi e^{-i \pi\alpha}
\{e^{i\pi\nu}I_{\nu+1}(x)K_\nu(x)+e^{-i\pi\nu}K_{\nu+1}(x)I_\nu(x)\}\\
&+i\pi e^{i \pi\alpha}
\{e^{-i\pi\nu}I_{\nu+1}(x)K_\nu(x)+e^{i\pi\nu}K_{\nu+1}(x)I_\nu(x)\}.\\
\end{split}
\]
and
\[
h_\nu(x)=\frac{F_\nu(x)}{G_\nu(x)}=\frac{g_{\nu,1}(x)}{e^{i\pi\alpha}}
-\frac{g_{\nu,2}(x)}{e^{-i\pi\alpha}},
\]
where $G_\nu(x)$ is given by \eqref{2.2}, we have
\[
\varPhi(\alpha,\varepsilon)=\frac1{2\pi i}\int_\varepsilon^\infty
\frac{h_\nu(x)}{x^\alpha(x+w)}dx.
\]
Recall the formula
\[
K_{\nu+1}(x)I_\nu(x)+I_{\nu+1}(x)K_\nu(x)=\frac1x
\]
(cf. \cite[p.80]{19}). Then we get
\[
\begin{split}
&e^{-i \pi\alpha}
\{e^{i\pi\nu}I_{\nu+1}(x)K_\nu(x)+e^{-i\pi\nu}K_{\nu+1}(x)I_\nu(x)\}\\
&\hphantom{e^{-i \pi\alpha}}
+e^{i \pi\alpha}
\{e^{-i\pi\nu}I_{\nu+1}(x)K_\nu(x)+e^{i\pi\nu}K_{\nu+1}(x)I_\nu(x)\}\\
&=e^{-i \pi\alpha}\biggl\{\frac{e^{-i\pi\nu}}x
+2i\sin(\pi\nu)I_{\nu+1}(x)K_\nu(x)\biggr\}\\
&\hphantom{=e^{-i \pi\alpha}}
+e^{i \pi\alpha}\biggl\{\frac{e^{i\pi\nu}}x
-2i\sin(\pi\nu)I_{\nu+1}(x)K_\nu(x)\biggr\}\\
&=\frac{2\cos\pi(\alpha+\nu)}x
+4\sin(\pi\nu)\sin(\pi\alpha)I_{\nu+1}(x)K_\nu(x).
\end{split}
\]
Hence, letting
\[
\begin{split}
&F_{\nu,1}(x)=2i\sin(\pi\alpha)K_{\nu+1}(x)K_\nu(x),\quad
F_{\nu.2}(x)=2i\pi^2\sin(\pi\alpha)I_{\nu+1}(x)I_\nu(x),\\
&F_{\nu,3}(x)=\frac{2i\pi \cos\pi(\alpha+\nu)}x,\quad
F_{\nu,4}(x)=4i\pi\sin(\pi\nu)\sin(\pi\alpha)I_{\nu+1}(x)K_\nu(x),
\end{split}
\]
we have
\[
h_\nu(x)=\frac{-F_{\nu,1}(x)+F_{\nu,2}(x)
+F_{\nu,3}(x)+F_{\nu,4}(x)}{G_\nu(x)}.
\]
We set
\[
\varPhi_k(\alpha,\varepsilon)=\frac1{2\pi i}\int_\varepsilon^\infty
\frac1{x^\alpha(x+w)}\frac{F_{\nu,k}(x)}{G_\nu(x)}dx,
\quad 1\leqq k\leqq4.
\]

By virtue of Lemma \ref{Lemma 2.3}, the functions
\[
\frac1{x+w}
\frac{K_{\nu+1}(x)K_\nu(x)}{G_\nu(x)},\quad
\frac1{x+w}
\frac{I_{\nu+1}(x)K_\nu(x)}{G_\nu(x)}
\]
are integrable on $(\varepsilon,\infty)$. Hence we get
\begin{equation}
\lim_{\alpha\to0}\varPhi_1(\alpha,\varepsilon)=
\lim_{\alpha\to0}\varPhi_4(\alpha,\varepsilon)=0.
\label{2.23}
\end{equation}
The integral $\varPhi_3(\alpha,\varepsilon)$ is written as
\[
\varPhi_3(\alpha,\varepsilon)=\cos\pi(\alpha+\nu)\int_\varepsilon^\infty
\frac{dx}{x^{1+\alpha}(x+w)G_\nu(x)}
\]
and, with the help of Lemma \ref{Lemma 2.3}, we can derive
\begin{equation}
\lim_{\alpha\to0}\varPhi_3(\alpha,\varepsilon)=
\cos(\pi\nu)\int_\varepsilon^\infty \frac{dx}{x(x+w)G_\nu(x)}.
\label{2.24}
\end{equation}
It follows from \eqref{2.19} and \eqref{2.20} that
\begin{equation}
G_\nu(x)=
\begin{cases}
\biggl(\log\dfrac1x\biggr)^2\{1+o(1)\}&\text{if $\nu=0$,}\\
\dfrac1{\kappa_\nu x^{2\nu}}\{1+o(1)\}\quad&\text{if $\nu>0$}
\end{cases}
\label{2.25}
\end{equation}
as $x\to0$, where $\kappa_\nu=1/4^{\nu-1}\{\varGamma(\nu)\}^2$.
This implies the convergence of the right hand side of \eqref{2.24}
as $\varepsilon\to0$ and 
\begin{equation}
\lim_{\varepsilon\to0}\lim_{\alpha\to0}\varPhi_3(\alpha,\varepsilon)=
\cos(\pi\nu)\int_0^\infty \frac{dx}{x(x+w)G_\nu(x)}.
\label{2.26}
\end{equation}

It remains to consider $\varPhi_2(\alpha,\varepsilon)$,
\[
\varPhi_2(\alpha,\varepsilon)=
\pi\sin(\pi\alpha)\int_\varepsilon^\infty\frac1{x^\alpha(x+w)}
\frac{I_{\nu+1}(x)I_\nu(x)}{G_\nu(x)}dx.
\]
We should remark that the function
\[
\frac1{x+w}\frac{I_{\nu+1}(x)I_\nu(x)}{G_\nu(x)}
\]
is not integrable on $(\varepsilon, \infty)$. We write
\[
\varPhi_2(\alpha,\varepsilon)=-\varPhi_2^1(\alpha,\varepsilon)
-\varPhi_2^2(\alpha,\varepsilon)
+\varPhi_2^3(\alpha,\varepsilon)+\varPhi_2^4(\alpha,\varepsilon),
\]
where
\[
\begin{split}
&\varPhi_2^1(\alpha,\varepsilon)=\frac{\sin(\pi\alpha)}\pi
\int_\varepsilon^\infty\frac1{x^\alpha(x+w)}
\frac{K_\nu(x)^2}{G_\nu(x)}dx,\\
&\varPhi_2^2(\alpha,\varepsilon)=2\sin(\pi\nu)\sin(\pi\alpha)
\int_\varepsilon^\infty\frac1{x^\alpha(x+w)}
\frac{I_{\nu+1}(x)K_\nu(x)}{G_\nu(x)}dx,\\
&\varPhi_2^3(\alpha,\varepsilon)=\pi\sin(\pi\alpha)
\int_\varepsilon^\infty\frac1{x^\alpha(x+w)}
\frac{I_\nu(x)\{I_\nu(x)-I_{\nu+1}(x)\}}{G_\nu(x)}dx,\\
&\varPhi_2^4(\alpha,\varepsilon)=\frac{\sin(\pi\alpha)}\pi
\int_\varepsilon^\infty\frac{dx}{x^\alpha(x+w)}.\\
\end{split}
\]
We can easily deduce from Lemma \ref{Lemma 2.3}
\[
\lim_{\alpha\to0}\varPhi_2^1(\alpha,\varepsilon)=
\lim_{\alpha\to0}\varPhi_2^2(\alpha,\varepsilon)=0
\]
in the same way as \eqref{2.23}.

To compute $\varPhi_2^3(\alpha,\varepsilon)$,
we note the following:
\[
\frac{I_\nu(x)\{I_\nu(x)-I_{\nu+1}(x)\}}{G_\nu(x)}
=\frac{2\nu+1}{\pi^2x}\{1+o(1)\},\,
x\to\infty,
\]
which is obtained from \eqref{2.10}, \eqref{2.11}
and Lemma \ref{Lemma 2.3}. Since
\[
\frac1{x+w}\frac{I_\nu(x)\{I_\nu(x)-I_{\nu+1}(x)\}}{G_\nu(x)}
\]
is integrable on $(\varepsilon,\infty)$, we have that
$\varPhi_2^3(\alpha,\varepsilon)$ tends to $0$ as $\alpha\to0$.

The calculation of $\varPhi_2^4(\alpha,\varepsilon)$ is easy.
In fact we have
\[
\begin{split}
\varPhi_2^4(\alpha,\varepsilon)
&=\frac{\sin(\pi\alpha)}\pi
\int_\varepsilon^\infty\frac{dx}{x^{1+\alpha}}
-\frac{w\sin(\pi\alpha)}\pi
\int_\varepsilon^\infty\frac{dx}{x^{1+\alpha}(x+w)}\\
&=\frac{\sin(\pi\alpha)}{\pi\alpha\varepsilon^\alpha}
-\frac{w\sin(\pi\alpha)}\pi
\int_\varepsilon^\infty\frac{dx}{x^{1+\alpha}(x+w)}\to1,\, \alpha\to0
\end{split}
\]
for any $\varepsilon>0$. Hence we conclude from \eqref{2.23} and \eqref{2.26}
\begin{equation}
\lim_{\varepsilon\to0}\lim_{\alpha\to0}\varPhi(\alpha,\varepsilon)=1+
\cos(\pi\nu)\int_0^\infty \frac{dx}{x(x+w)G_\nu(x)}.
\label{2.27}
\end{equation}

Combining \eqref{2.14}, \eqref{2.18}, \eqref{2.21},
\eqref{2.22} and \eqref{2.27}, we complete our proof of
\eqref{2.5} and \eqref{2.6} in the case of $\nu\geqq0$.

\medskip

It may be worthwhile to mention that the method used
to prove Theorem \ref{Theorem 2.1} can be applied to the decomposition
of $K_{\nu+\rho}/K_\nu$, and we can show the following.

\medskip

\begin{thm}\label{Theorem 2.5}
Let $\nu$ and $\rho$ be real numbers
with $\nu\geqq0$, $-\nu\leqq \rho<1$, $\rho\neq0$.
In the case when there is no integer $n$ such that
$\nu=2n+3/2$, we have that, if $\nu<3/2$,
\[
\frac{K_{\nu+\rho}(w)}{K_\nu(w)}=1+\int_0^\infty
\frac1{x+w}\frac{H_{\nu,\rho}(x)}{G_\nu(x)}dx
\]
and that, if $\nu>3/2$,
\[
\frac{K_{\nu+\rho}(w)}{K_\nu(w)}=1
+\sum_{j=1}^{N(\nu)}\frac1{z_{\nu,j}-w}
\frac{K_{\nu+\rho}(z_{\nu,j})}{K_{\nu+1}(z_{\nu,j})}
+\int_0^\infty\frac1{x+w}\frac{H_{\nu,\rho}(x)}{G_\nu(x)}dx,
\]
where
\[
\begin{split}
H_{\nu,\rho}(x)=&
-\cos\pi(\nu+\rho)K_{\nu+\rho}(x)I_\nu(x)\\
&+\cos(\pi\nu)I_{\nu+\rho}(x)K_\nu(x)
+\frac{\sin(\pi\rho)}\pi K_{\nu+\rho}(x)K_\nu(x).
\end{split}
\]
When $\nu=2n+3/2$ for some integer $n$, we have that
\[
\frac{K_{\nu+\rho}(w)}{K_\nu(w)}=1+\varLambda_{\nu,\rho}
+\sum_{j=1}^{N(\nu)}\frac{\varLambda_{\nu,\rho}(z_{\nu,j})}{z_{\nu,j}-w},
\]
where
\[
\begin{split}
&\varLambda_{\nu,\rho}=\lim_{\delta\to0}\frac{\sin(\pi\rho)}\pi
\int_{|x-x_0|>\delta}\frac1{x+w}
\frac{K_{\nu+\rho}(x)}{K_\nu(x)-\pi I_\nu(x)}dx,\\
&\varLambda_{\nu,\rho}(z)=
\begin{cases}
\dfrac{K_{\nu+\rho}(z)}{K_{\nu+1}(z)}\quad&\text{if $z\in D_{\nu+1}$,}\\
\dfrac{-\cos(\pi\rho)K_{\nu+\rho}(-z)+\pi I_{\nu+\rho}(-z)}
{K_{\nu+1}(-z)+\pi I_{\nu+1}(-z)}\quad&\text{if $z\in(-\infty,0)$}
\end{cases}
\end{split}
\]
and $x_0$ is the unique real zero of $K_\nu$.
\end{thm}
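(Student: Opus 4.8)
The plan is to run the contour-integral argument proving Theorem \ref{Theorem 2.1} almost verbatim, with $K_{\nu+1}$ replaced by $K_{\nu+\rho}$ in the numerator. Fix $\alpha\in(0,1)$ and $w\in D_\nu$, and set $f^w_{\nu,\rho,\alpha}(z)=z^{-\alpha}(z-w)^{-1}K_{\nu+\rho}(z)/K_\nu(z)$, which I integrate over the same curve $\gamma$. The residue theorem contributes $w^{-\alpha}K_{\nu+\rho}(w)/K_\nu(w)$ at $z=w$, and at each zero $z_{\nu,j}$ of $K_\nu$ the identity $zK_\nu'(z)-\nu K_\nu(z)=-zK_{\nu+1}(z)$ gives $K_\nu'(z_{\nu,j})=-K_{\nu+1}(z_{\nu,j})$, hence a residue $-z_{\nu,j}^{-\alpha}(z_{\nu,j}-w)^{-1}K_{\nu+\rho}(z_{\nu,j})/K_{\nu+1}(z_{\nu,j})$. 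Letting $\alpha\to0$ these are precisely the sum terms $K_{\nu+\rho}(z_{\nu,j})/K_{\nu+1}(z_{\nu,j})$ appearing in the asserted formulae.

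Next I would estimate the pieces of $\gamma$. The large arc tends to $0$ by Lemma \ref{Lemma 2.2}, exactly as in Theorem \ref{Theorem 2.1}. The crucial structural difference is the small circle about the origin: since $\rho<1$, the small-argument expansions \eqref{2.19} and \eqref{2.20} give $zK_{\nu+\rho}(z)/K_\nu(z)\to0$ as $z\to0$, so all three near-origin integrals vanish and no term $2\nu^+/w$ survives, in contrast with the case $\rho=1$. On the two sides of the negative axis I would use the connection formulae $K_\mu(e^{\pm i\pi}z)=e^{\mp i\pi\mu}K_\mu(z)\mp i\pi I_\mu(z)$ to replace $K_{\nu+\rho}(e^{\pm i\pi}z)/K_\nu(e^{\pm i\pi}z)$ by ratios $g_{\nu,\rho,1},g_{\nu,\rho,2}$ whose denominators are complex conjugates with product $G_\nu$ (defined in \eqref{2.2}). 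Combining $e^{-i\pi\alpha}g_{\nu,\rho,1}-e^{i\pi\alpha}g_{\nu,\rho,2}$ over $G_\nu$ and letting $\alpha\to0$, the integrable terms $K_{\nu+\rho}K_\nu$, $K_{\nu+\rho}I_\nu$ and $I_{\nu+\rho}K_\nu$ assemble, with coefficients $\sin(\pi\rho)/\pi$, $-\cos\pi(\nu+\rho)$ and $\cos(\pi\nu)$ respectively, into $H_{\nu,\rho}$; the one non-integrable term $I_{\nu+\rho}I_\nu$, carrying a factor $\sin(\pi\alpha)$ and satisfying $I_{\nu+\rho}I_\nu/G_\nu\to\pi^{-2}$ by Lemma \ref{Lemma 2.3}, contributes exactly the constant $1$, precisely as the term $\varPhi_2^4$ did in Theorem \ref{Theorem 2.1}. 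This yields the two generic formulae.

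For the exceptional case $\nu=2n+3/2$ I would specialize the same computation. Here $\cos(\pi\nu)=0$ and $\sin(\pi\nu)=-1$, so $G_\nu=(K_\nu-\pi I_\nu)^2$ and the integrand collapses to $H_{\nu,\rho}/G_\nu=\pi^{-1}\sin(\pi\rho)K_{\nu+\rho}/(K_\nu-\pi I_\nu)$. The single real zero $x_0$ of $K_\nu$ now produces a zero of $K_\nu-\pi I_\nu$ at $-x_0>0$ lying on the path of integration, so the integral only exists as a principal value: this is the origin of $\varLambda_{\nu,\rho}$ with its deleted neighborhood $|x-x_0|>\delta$. Treating this simple pole by the Sokhotski--Plemelj formula splits its contribution into the principal value together with a boundary term that, after evaluating the boundary values of $K_{\nu+\rho}$ and $K_{\nu+1}$ via the same connection formulae, equals $\varLambda_{\nu,\rho}(x_0)$; the remaining complex zeros enter through $\varLambda_{\nu,\rho}(z_{\nu,j})=K_{\nu+\rho}(z_{\nu,j})/K_{\nu+1}(z_{\nu,j})$ as in the generic case.

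I expect the main obstacle to be exactly this exceptional case. In Theorem \ref{Theorem 2.1} the corresponding situation ($\rho=1$ with $\nu-1/2$ an integer) was sidestepped by appealing to the polynomial structure of \cite{6}; for non-integer $\rho$ no such shortcut exists, and one must handle a zero of $K_\nu$ sitting directly on the branch cut and the contour. The delicate points are justifying the Sokhotski--Plemelj splitting rigorously (the pole is simple in $H_{\nu,\rho}/G_\nu$ even though $G_\nu$ has a double zero there), and identifying the boundary term with $\varLambda_{\nu,\rho}(x_0)$, including the $I_{\nu+\rho}$ and $I_{\nu+1}$ corrections coming from $K_\mu(e^{\pm i\pi}z)$. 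A secondary, purely bookkeeping, care is that for non-integer $\rho$ the factor $e^{-i\pi\rho}$ prevents $g_{\nu,\rho,1}$ from reducing to $-K_{\nu+\rho}(e^{i\pi}z)/K_\nu(e^{i\pi}z)$ as in the integer case, so the signs in $H_{\nu,\rho}$ must be tracked through the connection formulae rather than quoted from Theorem \ref{Theorem 2.1}.
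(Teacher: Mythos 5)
Your proposal is correct and takes exactly the route the paper intends: the paper states Theorem \ref{Theorem 2.5} without proof, remarking only that ``the method used to prove Theorem \ref{Theorem 2.1} can be applied,'' and your argument is precisely that method carried out for $K_{\nu+\rho}/K_\nu$. I checked the key computations and they hold: the residue at each zero is $-z_{\nu,j}^{-\alpha}(z_{\nu,j}-w)^{-1}K_{\nu+\rho}(z_{\nu,j})/K_{\nu+1}(z_{\nu,j})$; for $\rho<1$ the near-origin arcs vanish so no $2\nu^+/w$ term appears; the combination $e^{-i\pi\alpha}g_{\nu,\rho,1}-e^{i\pi\alpha}g_{\nu,\rho,2}$ over $G_\nu$ yields exactly $H_{\nu,\rho}/G_\nu$ plus the $\sin(\pi\alpha)I_{\nu+\rho}I_\nu$ term producing the constant $1$; and in the exceptional case $\nu=2n+3/2$ the integrand collapses to $\pi^{-1}\sin(\pi\rho)K_{\nu+\rho}/(K_\nu-\pi I_\nu)$ with a simple pole at $x_0$, whose two half-residue (Plemelj) contributions from the upper and lower sides of the cut sum, via the connection formulae, to $(-\cos(\pi\rho)K_{\nu+\rho}(x_0)+\pi I_{\nu+\rho}(x_0))/(K_{\nu+1}(x_0)+\pi I_{\nu+1}(x_0))$ divided by $-x_0-w$, i.e.\ the term $\varLambda_{\nu,\rho}(z)/(z-w)$ at the real zero $z=-x_0$ as stated.
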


\medskip

With the help of $K_\rho=K_{-\rho}$ and \eqref{2.8},
we furthermore see that Theorem \ref{Theorem 2.5} gives
a representation for $K_\mu/K_\nu$
for every $\mu,\nu\in\mathbb R$.


\section{The first hitting time of the Bessel process}

\noindent
For $\nu\in\mathbb R$ the one-dimensional diffusion process
with infinitesimal generator
\[
\mathcal G^{(\nu)}=
\frac12\frac{d^2}{dx^2}+\frac{2\nu+1}{2x}\frac d{dx}
=\frac1{2x^{2\nu+1}}\frac d{dx}\biggl(x^{2\nu+1}\frac d{dx}\biggr)
\]
is called the Bessel process with index $\nu$.
The classification of boundary points gives the following information.
The endpoint $\infty$ is a natural boundary for any $\nu\in\mathbb R$.
For $\nu\geqq0$, $0$ is an entrance and not exit boundary.
For $-1<\nu<0$, $0$ is a regular boundary, which is instantly
reflecting. For $\nu\leqq-1$, $0$ is an exit but not entrance boundary.
For more details, see \cite{11} and \cite{17} for example. 

For $a,b\in\mathbb R$ let $\tau_{a,b}^{(\nu)}$ be the first hitting time
to $b$ of the Bessel process with index $\nu$ starting at $a$.
The conditional distribution of $\tau_{a,b}^{(\nu)}$
under $\tau_{a,b}^{(\nu)}<\infty$ is infinitely divisible.
The purpose of this section is to give the exact form of
the L{\'e}vy measure $m_{a,b}^{(\nu)}$ when $0\leqq b<a$
by applying Theorem \ref{Theorem 2.1}.

It is known that, when $0\leqq a<b$, the distribution of
$\tau_{a,b}^{(\nu)}$ is a mixture of exponential distributions.
Let us recall the results in \cite{2}. See also \cite{11}.
In this case, the Laplace transforms of
the conditional distributions are given by the following.
For $\lambda>0$, if $b>0$ and $\nu>-1$,
\[
E[e^{-\lambda\tau_{0,b}^{(\nu)}}\,|\,\tau_{0,b}^{(\nu)}<\infty]=
\frac{(b\sqrt{2\lambda})^\nu}{2^\nu\varGamma(\nu+1)}
\frac1{I_\nu(b\sqrt{2\lambda})},
\]
if $0<a\leqq b$ and $\nu>-1$,
\[
E[e^{-\lambda\tau_{a,b}^{(\nu)}}\,|\,\tau_{a,b}^{(\nu)}<\infty]=
\biggl(\frac ba\biggr)^\nu
\frac{I_\nu(a\sqrt{2\lambda})}{I_\nu(b\sqrt{2\lambda})},
\]
if $0<a\leqq b$ and $\nu\leqq-1$,
\[
E[e^{-\lambda\tau_{a,b}^{(\nu)}}\,|\,\tau_{a,b}^{(\nu)}<\infty]=
\biggl(\frac ab\biggr)^\nu
\frac{I_{-\nu}(a\sqrt{2\lambda})}{I_{-\nu}(b\sqrt{2\lambda})}.
\]
Combining these results with the formula
\[
I_\mu(x)=\biggl(\frac x2\biggr)^\mu \frac1{\varGamma(\mu+1)}
\prod_{n=1}^\infty \biggl(1+\frac{x^2}{j_{\mu,n}^2}\biggr)
\]
for $\mu>-1$ and $x>0$, where $\{j_{\mu,n}\}_{n=1}^\infty$ is
an increasing sequence of positive zeros of the Bessel function
$J_\mu$ of the first kind of order $\mu$, we obtain the following
expressions for the L\'evy measures: if $b>0$ and $\nu>-1$,
\[
\frac{d m_{0,b}^{(\nu)}(x)}{dx}=\frac{1_{(0,\infty)}(x)}x
\sum_{n=1}^\infty
e^{-\frac{j_{\nu,n}^2}{2b^2}x},
\]
if $0<a\leqq b$ and $\nu>-1$,
\[
\frac{d m_{a,b}^{(\nu)}(x)}{dx}=\frac{1_{(0,\infty)}(x)}x
\sum_{n=1}^\infty
\biggl( e^{-\frac{j_{\nu,n}^2}{2b^2}x}
-e^{-\frac{j_{\nu,n}^2}{2a^2}x}\biggr),
\]
if $0<a\leqq b$ and $\nu\leqq-1$,
\[
\frac{d m_{a,b}^{(\nu)}(x)}{dx}=\frac{1_{(0,\infty)}(x)}x
\sum_{n=1}^\infty
\biggl( e^{-\frac{j_{-\nu,n}^2}{2b^2}x}
-e^{-\frac{j_{-\nu,n}^2}{2a^2}x}\biggr),
\]
respectively, where $1_A$ is the indicator function of a set $A$.

The following is the main result in this section.

\medskip

\begin{thm}\label{Theorem 3.1}
For $0\leqq b<a$ the support of the L\'evy measure
$m_{a,b}^{(\nu)}$ is $[0,\infty)$ and it is absolutely
continuous with respect to the Lebesgue measure.
We have the following expressions for the density
$p_{a,b}^{(\nu)}$, $x>0$ of $m_{a,b}^{(\nu)}$.

\noindent
{\rm (1)} If $a>0$,
\[
p_{a,0}^{(-1/2)}(x)=\frac a{\sqrt{2\pi x^3}}.
\]
{\rm (2)} If $a>0$, $\nu-1/2\in\mathbb Z$ and $\nu\leqq-3/2$,
\[
p_{a,0}^{(\nu)}(x)=\frac a{\sqrt{2\pi x^3}}
-\frac1{2\sqrt{\pi x^3}}\sum_{j=1}^{N(\nu)}\int_0^\infty
e^{-\frac{\xi^2}{4x}+\frac{z_{\nu,j}\xi}{\sqrt 2 a}}d\xi.
\]
{\rm (3)} If $a>0$, $-3/2<\nu<0$ and $\nu\neq-1/2$,
\[
p_{a,0}^{(\nu)}(x)=\frac a{\sqrt{2\pi x^3}}
+\frac{\cos(\pi\nu)}{2\sqrt{\pi x^3}}
\int_0^\infty \int_0^\infty \frac1{\eta G_{|\nu|}(\eta)}
e^{-\frac{\xi^2}{4x}+\frac{\xi\eta}{\sqrt 2 a}}d\xi d\eta.
\]
{\rm (4)} If $a>0$, $\nu-1/2\notin\mathbb Z$ and $\nu<-3/2$,
\[
\begin{split}
p_{a,0}^{(\nu)}(x)=\frac a{\sqrt{2\pi x^3}}
&-\frac1{2\sqrt{\pi x^3}}\sum_{j=1}^{N(\nu)}\int_0^\infty
e^{-\frac{\xi^2}{4x}+\frac{z_{\nu,j}\xi}{\sqrt 2 a}}d\xi\\
&+\frac{\cos(\pi\nu)}{2\sqrt{\pi x^3}}
\int_0^\infty\!\!\!\int_0^\infty \frac1{\eta G_{|\nu|}(\eta)}
e^{-\frac{\xi^2}{4x}+\frac{\xi\eta}{\sqrt 2 a}}d\xi d\eta.
\end{split}
\]
{\rm (5)} If $0<b<a$ and $\nu=\pm1/2$,
\[
p_{a,b}^{(\nu)}(x)=\frac{a-b}{\sqrt{2\pi x^3}}.
\]
{\rm (6)} If $0<b<a$, $\nu-1/2\in\mathbb Z$ and $|\nu|\geqq3/2$
\[
p_{a,b}^{(\nu)}(x)=\frac{a-b}{\sqrt{2\pi x^3}}
-\frac1{2\sqrt{\pi x^3}}\sum_{j=1}^{N(\nu)}\int_0^\infty
e^{-\frac{\xi^2}{4x}} \biggl(e^{\frac{z_{\nu,j}\xi}{\sqrt 2 a}}
-e^{\frac{z_{\nu,j}\xi}{\sqrt 2 b}}\biggr)d\xi.
\]
{\rm (7)} If $0<b<a$, $0\leqq|\nu|<3/2$ and $\nu\neq\pm1/2$,
\[
p_{a,b}^{(\nu)}(x)=\frac{a-b}{\sqrt{2\pi x^3}}
+\frac{\cos(\pi\nu)}{2\sqrt{\pi x^3}}
\int_0^\infty \!\!\! \int_0^\infty \frac1{\eta G_{|\nu|}(\eta)}
e^{-\frac{\xi^2}{4x}}\biggl(e^{-\frac{\xi\eta}{\sqrt 2 a}}
-e^{-\frac{\xi\eta}{\sqrt 2 b}}\biggr)d\xi d\eta.
\]
{\rm (8)} If $0<b<a$, $\nu-1/2\notin\mathbb Z$ and $|\nu|>3/2$,
\[
\begin{split}
p_{a,b}^{(\nu)}(x)=
\frac{a-b}{\sqrt{2\pi x^3}}
&-\frac1{2\sqrt{\pi x^3}}\sum_{j=1}^{N(\nu)}\int_0^\infty
e^{-\frac{\xi^2}{4x}} \biggl(e^{\frac{z_{\nu,j}\xi}{\sqrt 2 a}}
-e^{\frac{z_{\nu,j}\xi}{\sqrt 2 b}}\biggr)d\xi\\
&+\frac{\cos(\pi\nu)}{2\sqrt{\pi x^3}}
\int_0^\infty \!\!\! \int_0^\infty \frac1{\eta G_{|\nu|}(\eta)}
e^{-\frac{\xi^2}{4x}}\biggl(e^{-\frac{\xi\eta}{\sqrt 2 a}}
-e^{-\frac{\xi\eta}{\sqrt 2 b}}\biggr)d\xi d\eta.
\end{split}
\]
\end{thm}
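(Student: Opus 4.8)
The plan is to identify the Laplace transform of the measure $x\,m_{a,b}^{(\nu)}(dx)$ with a multiple of the ratio $K_{\nu+1}/K_\nu$, to feed in the decomposition of Theorem~\ref{Theorem 2.1}, and to invert the resulting transform term by term. Since the conditional law of $\tau_{a,b}^{(\nu)}$ given $\{\tau_{a,b}^{(\nu)}<\infty\}$ is infinitely divisible with Lévy measure $m_{a,b}^{(\nu)}$ (cf.\ \cite{7,12,20}), its Bernstein exponent is
\[
\Psi(\lambda)=\int_0^\infty\bigl(1-e^{-\lambda x}\bigr)\,m_{a,b}^{(\nu)}(dx)
=-\log E\bigl[e^{-\lambda\tau_{a,b}^{(\nu)}}\bigm|\tau_{a,b}^{(\nu)}<\infty\bigr].
\]
For $0<b<a$ the downward hitting transform is built from the decreasing solution $x^{-\nu}K_\nu(x\sqrt{2\lambda})$ of $\mathcal G^{(\nu)}u=\lambda u$, so that, up to a multiplicative constant independent of $\lambda$ (which will play no role), $E[e^{-\lambda\tau_{a,b}^{(\nu)}}\mid\tau_{a,b}^{(\nu)}<\infty]$ equals $K_\nu(a\sqrt{2\lambda})/K_\nu(b\sqrt{2\lambda})$ (cf.\ \cite{2,11,12}); the case $b=0$, relevant to parts (1)--(4), is the limit $b\downarrow0$ handled with \eqref{2.19}.

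I would next differentiate $\Psi$ in $\lambda$. Using the identity $zK_\nu'(z)-\nu K_\nu(z)=-zK_{\nu+1}(z)$ quoted in the excerpt, one gets
\[
\frac{d}{d\lambda}\log K_\nu(c\sqrt{2\lambda})
=\frac\nu{2\lambda}-\frac c{\sqrt{2\lambda}}\frac{K_{\nu+1}(c\sqrt{2\lambda})}{K_\nu(c\sqrt{2\lambda})},
\]
and the two terms $\nu/2\lambda$ cancel between the arguments $a$ and $b$, leaving
\[
\int_0^\infty x e^{-\lambda x}\,p_{a,b}^{(\nu)}(x)\,dx=\Psi'(\lambda)
=\frac1{\sqrt{2\lambda}}\Bigl(a\,\frac{K_{\nu+1}(a\sqrt{2\lambda})}{K_\nu(a\sqrt{2\lambda})}
-b\,\frac{K_{\nu+1}(b\sqrt{2\lambda})}{K_\nu(b\sqrt{2\lambda})}\Bigr),
\]
with the $b$-term absent when $b=0$, since \eqref{2.19} and \eqref{2.20} give $\lim_{b\downarrow0}b\,(K_{\nu+1}/K_\nu)(b\sqrt{2\lambda})=0$ for $\nu<0$. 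Inserting Theorem~\ref{Theorem 2.1} at $w=a\sqrt{2\lambda}$ and $w=b\sqrt{2\lambda}$, the constant $1$ produces $(a-b)/\sqrt{2\lambda}$, the term $2\nu^+/w$ contributes $\nu^+/\lambda$ from each argument and these cancel (they vanish outright for $\nu<0$), and the residue sum together with the Stieltjes integral are what remain, each carried by the prefactor $c/\sqrt{2\lambda}$ with $c\in\{a,b\}$.

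The termwise inversion rests on two elementary facts: the representation $\frac1{z+c\sqrt{2\lambda}}=\int_0^\infty e^{-\xi z}e^{-\xi c\sqrt{2\lambda}}\,d\xi$, valid because $\RE(z+c\sqrt{2\lambda})>0$ both for $z=\eta>0$ and for $z=-z_{\nu,j}$ (recall $\RE z_{\nu,j}<0$), and the standard transform $\mathcal L^{-1}\bigl[e^{-\kappa\sqrt{2\lambda}}/\sqrt{2\lambda}\bigr](x)=(2\pi x)^{-1/2}e^{-\kappa^2/(2x)}$. Inverting $(a-b)/\sqrt{2\lambda}$ and dividing by $x$ yields the leading term $(a-b)/\sqrt{2\pi x^3}$; inverting $\tfrac{c}{\sqrt{2\lambda}}\tfrac1{z_{\nu,j}-c\sqrt{2\lambda}}$ and $\tfrac{c}{\sqrt{2\lambda}}\tfrac1{\eta+c\sqrt{2\lambda}}$, then rescaling $\xi\mapsto\xi/(\sqrt2\,c)$, produces exactly the single integrals $\int_0^\infty e^{-\xi^2/(4x)}e^{z_{\nu,j}\xi/(\sqrt2\,c)}\,d\xi$ and the double integrals $\int_0^\infty\!\!\int_0^\infty \tfrac{1}{\eta G_{|\nu|}(\eta)}e^{-\xi^2/(4x)-\xi\eta/(\sqrt2\,c)}\,d\xi\,d\eta$ of the statement, with $c=a$ and $c=b$ giving the displayed differences (and only $c=a$ surviving when $b=0$). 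The case split mirrors Theorem~\ref{Theorem 2.1}: when $\nu-1/2\in\mathbb Z$ the factor $\cos(\pi\nu)$ vanishes and only the finite residue sum remains, whereas for non–half–integer $\nu$ the Stieltjes integral also contributes; absolute continuity and support $[0,\infty)$ are then read off from the positivity of the resulting densities for every $x>0$.

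The main obstacle will be justifying the analytic interchanges: differentiating $\Psi$ under the integral sign, applying Fubini to move the $\xi$-integration past the Laplace inversion, and recombining the $\eta$- and $\xi$-integrals into a single absolutely convergent double integral. This is exactly where Lemmas~\ref{Lemma 2.3} and~\ref{Lemma 2.4} enter: near infinity $G_{|\nu|}(\eta)\sim\pi e^{2\eta}/2\eta$ makes $1/(\eta G_{|\nu|}(\eta))$ exponentially small, while near the origin \eqref{2.25} gives $1/(\eta G_{|\nu|}(\eta))=O(\eta^{2|\nu|-1})$, which is integrable for $0<|\nu|<3/2$; together with the Gaussian factor $e^{-\xi^2/(4x)}$ these estimates secure absolute convergence and legitimize every interchange. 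A secondary point requiring care is the passage $b\downarrow0$ in parts (1)--(4), where the small-argument asymptotics \eqref{2.19}--\eqref{2.20} are needed to discard the $b$-contribution cleanly.
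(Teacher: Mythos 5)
Your proposal is correct in substance, but it is not the paper's route. The paper never differentiates the Bernstein exponent: it first integrates the identity \eqref{3.7} together with Theorem~\ref{Theorem 2.1} to obtain Proposition~\ref{Proposition 3.2}, a closed form for $\log\{x^\mu K_\mu(x)\}$ in terms of the zeros and of $G_\mu$, and then verifies the L\'evy--Khintchine identity $\phi_{a,b}^{(\nu)}(\lambda)=\int_0^\infty(e^{-\lambda x}-1)p_{a,b}^{(\nu)}(x)\,dx$ directly, using the logarithmic Laplace-transform identities of Lemma~\ref{Lemma 3.3} (transforms of $\sqrt{2\lambda}$, $\log\frac{z}{z-c\sqrt{2\lambda}}$ and the $G_\nu$-weighted logarithmic integral). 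You instead match the derivative: $\Psi'(\lambda)=\int_0^\infty xe^{-\lambda x}\,m_{a,b}^{(\nu)}(dx)$ against the ratio $K_{\nu+1}/K_\nu$ obtained from the log-derivative identity, feed in Theorem~\ref{Theorem 2.1}, and invert with only the elementary kernels $e^{-\kappa\sqrt{2\lambda}}/\sqrt{2\lambda}$ and $(z+c\sqrt{2\lambda})^{-1}$ --- no logarithmic transforms at all. Each approach buys something. Yours is more economical: Proposition~\ref{Proposition 3.2} and Lemma~\ref{Lemma 3.3} become unnecessary, and, notably, the case $\nu=0$ loses its pathology --- after differentiation the relevant integrand $1/\{\eta(\eta+w)G_0(\eta)\}$ is integrable near $0$, whereas at the exponent level the termwise integrals in \eqref{3.12} diverge (Remark~\ref{Remark 3.4}), which is precisely why the paper needs its separate $\varepsilon$-truncation argument for $\nu=0$. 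The paper's route, in exchange, produces the representation of $\log\{x^\mu K_\mu(x)\}$ as a byproduct and verifies the L\'evy--Khintchine formula without having to differentiate under the integral sign or to recover $m_{a,b}^{(\nu)}$ from the measure $x\,m_{a,b}^{(\nu)}(dx)$ by uniqueness of Laplace transforms; both of those steps in your plan are routine but should be stated. Two small patches you should make: first, your integrability claim ``$1/(\eta G_{|\nu|}(\eta))=O(\eta^{2|\nu|-1})$, integrable for $0<|\nu|<3/2$'' silently excludes $\nu=0$, which belongs to case (7); there you must invoke the $\nu=0$ branch of \eqref{2.25}, $G_0(\eta)\sim(\log(1/\eta))^2$, which still gives integrability (and, when estimating the $a$- and $b$-pieces separately before the $\xi$-integration, you should keep the difference $e^{-\xi\eta/\sqrt2 b}-e^{-\xi\eta/\sqrt2 a}$ together, exactly as the paper does in its $\nu=0$ argument). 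Second, absolute continuity follows from the identification $m_{a,b}^{(\nu)}(dx)=p_{a,b}^{(\nu)}(x)\,dx$ itself, not from ``positivity of the densities,'' which is not evident from the formulas (the residue sums over complex zeros have no obvious sign); the support statement likewise needs no positivity beyond what the identification already gives.
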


\medskip

The rest of this section is devoted to the proof of Theorem \ref{Theorem 3.1}.
By the formulae for Laplace transforms for $\tau_{a,b}^{(\nu)}$,
we have, for $\lambda>0$, if $a>0$ and $\nu<0$,
\begin{equation}
E[e^{-\lambda\tau_{a,0}^{(\nu)}}\,|\,\tau_{a,0}^{(\nu)}<\infty]=
\frac{2^{\nu+1}}{\varGamma(|\nu|)(a\sqrt{2\lambda})^\nu}
K_\nu(a\sqrt{2\lambda}),
\label{3.1}
\end{equation}
if $0<b\leqq a$ and $\nu\in\mathbb R$,
\begin{equation}
E[e^{-\lambda\tau_{a,b}^{(\nu)}}\,|\,\tau_{a,b}^{(\nu)}<\infty]=
\biggl(\frac ab\biggr)^{|\nu|}
\frac{K_{|\nu|}(a\sqrt{2\lambda})}{K_{|\nu|}(b\sqrt{2\lambda})}.
\label{3.2}
\end{equation}

We represent $K_\nu(x)$ by means of $G_\nu$ and zeros of $K_\nu$.

\medskip

\begin{prop}\label{Proposition 3.2}
For $x>0$ we have the following formulae.

\noindent
{\rm (1)} If $\mu=1/2$,
\begin{equation}
\log \{x^{1/2} K_{1/2}(x)\}=\frac12\log\frac\pi2-x.
\label{3.3}
\end{equation}
{\rm (2)} If $\mu-1/2\in\mathbb Z$ and $\mu\geqq3/2$,
\begin{equation}
\log \{x^\mu K_\mu(x)\}=\log\{2^{\mu-1}\varGamma(\mu)\}
-x-\sum_{j=1}^{N(\mu)}\log \frac{z_{\mu,j}}{z_{\mu,j}-x}.
\label{3.4}
\end{equation}
{\rm (3)} If $0<\mu<3/2$ and $\mu\neq1/2$,
\begin{equation}
\log \{x^\mu K_\mu(x)\}=
\log\{2^{\mu-1}\varGamma(\mu)\}-x
-\cos(\pi\mu)\int_0^\infty \frac1{y G_\mu(y)}\log\frac{y+x}y dy.
\label{3.5}
\end{equation}
{\rm (4)} If $\mu-1/2\notin\mathbb Z$ and $\mu>3/2$,
\begin{equation}
\begin{split}
\log \{x^\mu K_\mu(x)\}=&\log\{2^{\mu-1}\varGamma(\mu)\}
-x-\sum_{j=1}^{N(\mu)}\log \frac{z_{\mu,j}}{z_{\mu,j}-x}\\
&-\cos(\pi\mu)\int_0^\infty \frac1{y G_\mu(y)}\log\frac{y+x}y dy.
\end{split}
\label{3.6}
\end{equation}
\end{prop}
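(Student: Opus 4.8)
The plan is to reduce every formula to Theorem \ref{Theorem 2.1} by differentiation. Using the relation $zK_\mu'(z)-\mu K_\mu(z)=-zK_{\mu+1}(z)$ quoted above, one has $K_\mu'(x)/K_\mu(x)=\mu/x-K_{\mu+1}(x)/K_\mu(x)$, and hence for $x>0$
\[
\frac{d}{dx}\log\{x^\mu K_\mu(x)\}
=\frac\mu x+\frac{K_\mu'(x)}{K_\mu(x)}
=\frac{2\mu}x-\frac{K_{\mu+1}(x)}{K_\mu(x)}.
\]
Thus the logarithmic derivative of the left-hand side is, up to the explicit term $2\mu/x$, exactly the ratio treated in Theorem \ref{Theorem 2.1} (with $\nu=\mu\geqq0$, so that $\nu^+=\mu$; note $x>0$ lies in $D_\mu$ since the zeros of $K_\mu$ all have negative real part). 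Substituting the relevant formula of that theorem makes the $2\mu/x$ terms cancel, leaving a derivative that can be integrated back directly.

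Carrying this out case by case, for $\mu-1/2\in\mathbb Z$ and $\mu\geqq3/2$ formula \eqref{2.4} gives
\[
\frac{d}{dx}\log\{x^\mu K_\mu(x)\}=-1-\sum_{j=1}^{N(\mu)}\frac1{z_{\mu,j}-x},
\]
and since $\frac{d}{dx}\log\{z_{\mu,j}/(z_{\mu,j}-x)\}=1/(z_{\mu,j}-x)$, the right-hand side is the derivative of $-x-\sum_{j}\log\{z_{\mu,j}/(z_{\mu,j}-x)\}$, which yields \eqref{3.4} up to a constant. Likewise, for $0<\mu<3/2$ with $\mu\neq1/2$, formula \eqref{2.5} (in which $N(\mu)=0$) gives
\[
\frac{d}{dx}\log\{x^\mu K_\mu(x)\}=-1-\cos(\pi\mu)\int_0^\infty\frac{dy}{y(y+x)G_\mu(y)},
\]
and the observation $\frac{d}{dx}\bigl[\tfrac1y\log\tfrac{y+x}y\bigr]=\tfrac1{y(y+x)}$ identifies the integral as the $x$-derivative of $\int_0^\infty\frac1{yG_\mu(y)}\log\frac{y+x}y\,dy$, giving \eqref{3.5} up to a constant. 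Formula \eqref{3.6} follows by combining these two computations via \eqref{2.6}, and \eqref{3.3} is immediate from $K_{1/2}(x)=\sqrt{\pi/(2x)}\,e^{-x}$.

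The integration constant in each case is pinned down by letting $x\to0$. By \eqref{2.19}, $x^\mu K_\mu(x)\to2^{\mu-1}\varGamma(\mu)$, while each term $\log\{z_{\mu,j}/(z_{\mu,j}-x)\}$ and the integral $\int_0^\infty\frac1{yG_\mu(y)}\log\frac{y+x}y\,dy$ vanishes as $x\to0$, so the constant equals $\log\{2^{\mu-1}\varGamma(\mu)\}$, as asserted. The one point needing care is the interchange of $d/dx$ with the integral, together with the convergence of that integral. As $y\to\infty$, Lemma \ref{Lemma 2.3} gives $1/(yG_\mu(y))=O(e^{-2y}/y)$ while $\log\frac{y+x}y=O(x/y)$, so there is no difficulty; as $y\to0$, \eqref{2.25} gives $1/(yG_\mu(y))=O(y^{2\mu-1})$ against a logarithmic factor, which is integrable precisely because $\mu>0$. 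These bounds, uniform for $x$ in any compact subinterval of $(0,\infty)$, justify differentiation under the integral sign by dominated convergence and complete the proof.
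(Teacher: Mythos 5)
Your proposal is correct and follows essentially the same route as the paper: both rest on the identity $\frac{d}{dx}\log\{x^\mu K_\mu(x)\}=\frac{2\mu}{x}-\frac{K_{\mu+1}(x)}{K_\mu(x)}$, substitute Theorem \ref{Theorem 2.1}, integrate back, and fix the constant via $x^\mu K_\mu(x)\to 2^{\mu-1}\varGamma(\mu)$ as $x\to0$. The only cosmetic difference is that you justify recognizing the antiderivative of the integral term by differentiation under the integral sign (dominated convergence), whereas the paper integrates first and interchanges the order of integration by Fubini's theorem; these are equivalent and rely on the same integrability estimates.
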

\begin{proof}  Formula \eqref{3.3} is obtained from
the explicit expression for $K_{1/2}(x)$.

In order to show the others, we note the following formula:
\begin{equation}
\frac d{dx}\log\{x^\nu K_\nu(x)\}
=-\frac{K_{\nu+1}(x)}{K_\nu(x)}+\frac{2\nu}x,
\label{3.7}
\end{equation}
which can be derived by \eqref{2.7} and
$(x^\nu K_\nu(x))'=-x^\nu K_{\nu-1}(x)$
(cf. \cite[p.79]{19}).

If $\nu-1/2\in\mathbb Z$ and $\nu\geqq3/2$,
it follows from \eqref{2.4} and \eqref{3.7} that
\[
\frac d{dx}\log\{x^\nu K_\nu(x)\}
=-1-\sum_{j=1}^{N(\nu)}\frac1{z_{\nu,j}-x},
\]
Then we obtain that, for any $\varepsilon>0$,
\[
\log\{x^\nu K_\nu(x)\}-\log\{\varepsilon^\nu K_\nu(\varepsilon)\}
=-(x-\varepsilon)
+\sum_{j=1}^{N(\nu)}\int_\varepsilon^x \frac{d\xi}{\xi-z_{\nu,j}}.
\]
Hence, letting $\varepsilon\to0$, we get \eqref{3.4} with the help of
\begin{equation}
\lim_{x\to0} x^\nu K_\nu(x)=2^{\nu-1}\varGamma(\nu).
\label{3.8}
\end{equation}

If $0<\nu<3/2$ and $\nu\neq1/2$, it follows from
\eqref{2.5} and \eqref{3.7} that
\[
\frac d{dx}\log\{x^\nu K_\nu(x)\}
=-1-\cos(\pi\nu)\int_0^\infty\frac{dy}{y(y+x)G_\nu(y)}.
\]
Hence we have
\[
\log\{x^\nu K_\nu(x)\}-\log\{\varepsilon^\nu K_\nu(\varepsilon)\}
=-(x-\varepsilon)-\cos(\pi\nu)\int_\varepsilon^x d\xi
\int_0^\infty\frac{dy}{y(y+\xi) G_\nu(y)}.
\]
Note that $G_\nu(x)$ is positive for $x>0$
unless $\nu$ is $2n+3/2$ for any integer $n$.
Thus it follows from \eqref{3.8} that
\[
\log\{x^\nu K_\nu(x)\}-\log\{2^{\nu-1}\varGamma(\nu)\}
=-x-\cos(\pi\nu)\int_0^x d\xi
\int_0^\infty\frac{dy}{y(y+\xi) G_\nu(y)}.
\]
By the Fubini theorem, the right hand side is equal to
\[
-x-\cos(\pi\nu)\int_0^\infty \frac{dy}{y G_\nu(y)}
\int_0^x\frac{d\xi}{y+\xi},
\]
which yields \eqref{3.5}.

We can show \eqref{3.6} in the same way.
\end{proof}

\medskip

In order to see Theorem \ref{Theorem 3.1}, we only have to check
\[
\phi_{a,b}^{(\nu)}(\lambda):=
\log E[e^{-\lambda\tau_{a,b}^{(\nu)}}\,|\,\tau_{a,b}^{(\nu)}<\infty]
=\int_0^\infty (e^{-\lambda x}-1)p_{a,b}^{(\nu)}(x)dx
\]
for each case. It follows from \eqref{3.1} and \eqref{3.2} that,
if $a>0$ and $\nu<0$,
\[
\phi_{a,0}^{(\nu)}(\lambda)
=\log\{(a\sqrt{2\lambda})^{|\nu|}K_{|\nu|}(a\sqrt{2\lambda})\}
-\log\{2^{|\nu|-1}\varGamma(|\nu|)\},
\]
and that, if $0<b<a$ and $\nu\in\mathbb R$,
\begin{equation}
\phi_{a,b}^{(\nu)}(\lambda)
=\log\{(a\sqrt{2\lambda})^{|\nu|}K_{|\nu|}(a\sqrt{2\lambda})\}
-\log\{(b\sqrt{2\lambda})^{|\nu|}K_{|\nu|}(b\sqrt{2\lambda})\}.
\label{3.9}
\end{equation}

The following lemma gives rise to Theorem \ref{Theorem 3.1} for $\nu\neq0$
by Proposition \ref{Proposition 3.2}.

\medskip

\begin{lemma}\label{Lemma 3.3}
Let $c>0$, $\nu>0$ and $z\in{\mathbb C}^-$.
For $\lambda>0$ it follows that
\begin{align}
&\sqrt{2\lambda}=-\int_0^\infty
\frac{e^{-\lambda x}-1}{\sqrt{2\pi x^3}}dx,
\label{3.10}\\
&\log\frac z{z-c\sqrt{2\lambda}}=\int_0^\infty
\frac{e^{-\lambda x}-1}{2\sqrt{\pi x^3}}dx
\int_0^\infty e^{-\frac{\xi^2}{4x}
+\frac{z\xi}{\sqrt2 c}}d\xi
\label{3.11}
\end{align}
and
\begin{equation}
\begin{split}
&\int_0^\infty \log\frac{\eta+c\sqrt{2\lambda}}\eta
\frac1{\eta G_\nu(\eta)}d\eta\\
&=-\int_0^\infty \frac{e^{-\lambda x}-1}{2\sqrt{\pi x^3}}dx
\int_0^\infty \!\!\! \int_0^\infty \frac1{\eta G_\nu(\eta)}
e^{-\frac{\xi^2}{4x}-\frac{\xi\eta}{\sqrt2 c}}d\xi d\eta.
\end{split}
\label{3.12}
\end{equation}
\end{lemma}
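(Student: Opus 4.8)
The three identities are inverse Laplace transform formulae, and I would prove them in the order \eqref{3.10}, \eqref{3.11}, \eqref{3.12}, deriving the third from the second. The unifying device is to show that both sides vanish at $\lambda=0$ and have equal derivatives in $\lambda$, all interchanges of limits being justified by dominated convergence and Fubini--Tonelli.

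For \eqref{3.10} the plan is a direct differentiation under the integral sign. Writing $f(\lambda)$ for the right-hand side, the factor $e^{-\lambda x}-1$ shows $f(0)=0$, and differentiating in $\lambda$ (dominated on compact $\lambda$-intervals by $x e^{-\lambda x}/\sqrt{2\pi x^3}$) gives
\[
f'(\lambda)=-\frac1{\sqrt{2\pi}}\int_0^\infty x^{-1/2}e^{-\lambda x}\,dx=-\frac1{\sqrt{2\lambda}},
\]
using $\int_0^\infty x^{-1/2}e^{-\lambda x}dx=\sqrt{\pi/\lambda}$. Integrating from $0$ to $\lambda$ yields $f(\lambda)=-\sqrt{2\lambda}$, which is \eqref{3.10}.

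For \eqref{3.11} let $g(\lambda)$ be the right-hand side; again $g(0)=0$. The key step is to differentiate in $\lambda$, pass $\dd/\dd\lambda$ through the outer integral, and then apply Fubini to exchange the $x$- and $\xi$-integrations. Setting $b=-z/(\sqrt2\,c)$, so that $\RE(b)>0$ for $z\in\bbC^-$, the resulting inner $x$-integral is the classical
\[
\int_0^\infty x^{-1/2}e^{-\lambda x-\xi^2/(4x)}\,dx=\sqrt{\frac\pi\lambda}\,e^{-\xi\sqrt\lambda},\qquad \lambda,\xi>0,
\]
after which the remaining $\xi$-integral is an elementary exponential integral, convergent because $\RE(b+\sqrt\lambda)>0$. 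This produces
\[
g'(\lambda)=-\frac1{2\sqrt\lambda}\cdot\frac1{b+\sqrt\lambda}
=\frac c{\sqrt{2\lambda}\,(z-c\sqrt{2\lambda})},
\]
which is exactly $\dfrac{\dd}{\dd\lambda}\log\dfrac z{z-c\sqrt{2\lambda}}$; since both sides of \eqref{3.11} vanish at $\lambda=0$, the identity follows.

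Finally, I would deduce \eqref{3.12} from \eqref{3.11} by specializing to $z=-\eta$ with $\eta>0$ (which lies in $\bbC^-$): the left side of \eqref{3.11} becomes $-\log\frac{\eta+c\sqrt{2\lambda}}\eta$ and the inner exponent becomes $-\xi\eta/(\sqrt2\,c)$. Multiplying by $(\eta G_\nu(\eta))^{-1}$, integrating over $\eta\in(0,\infty)$, and interchanging the $\eta$-integration with the $(x,\xi)$-integrations then gives \eqref{3.12}. The main obstacle is justifying this last interchange by Tonelli's theorem, for which I must verify that $\eta\mapsto\log\frac{\eta+c\sqrt{2\lambda}}\eta\,(\eta G_\nu(\eta))^{-1}$ is integrable on $(0,\infty)$. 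Near $\eta=0$ the asymptotics \eqref{2.25} give $G_\nu(\eta)\sim\kappa_\nu^{-1}\eta^{-2\nu}$ for $\nu>0$, so the integrand behaves like $\eta^{2\nu-1}\log(1/\eta)$, which is integrable precisely because $\nu>0$; near $\eta=\infty$, Lemma~\ref{Lemma 2.3} gives $G_\nu(\eta)\sim\pi e^{2\eta}/(2\eta)$ while $\log(1+c\sqrt{2\lambda}/\eta)\sim c\sqrt{2\lambda}/\eta$, producing exponential decay. Since the integrand is of one sign once the factor $e^{-\lambda x}-1\leqq0$ is accounted for, Tonelli applies, and these same bounds secure the earlier differentiations and the Fubini step in \eqref{3.11}.
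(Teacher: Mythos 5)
Your proposal is correct, but it follows a genuinely different route from the paper's. The paper evaluates each integral directly in one pass: \eqref{3.10} comes from the gamma-function identity $\int_0^\infty y^{p-1}(e^{-y}-1)\,dy=\varGamma(p)$ at $p=-1/2$, while \eqref{3.11} and \eqref{3.12} are obtained by applying Fubini to the full iterated integrals and then invoking the $x^{-3/2}$ heat-kernel transform \eqref{3.13} together with the Frullani-type integral \eqref{3.15}; in particular the paper proves \eqref{3.12} by redoing the triple-integral computation from scratch. You instead differentiate in $\lambda$, which replaces these ingredients by the $x^{-1/2}$ heat-kernel transform (the paper's own formula \eqref{4.18}) and an elementary exponential integral, reduce both sides of \eqref{3.10} and \eqref{3.11} to the same rational function of $\sqrt\lambda$, and recover the identities by integrating from $\lambda=0$; and you obtain \eqref{3.12} as a corollary of \eqref{3.11} by specializing $z=-\eta\in\bbC^-$ and integrating against $(\eta G_\nu(\eta))^{-1}\,d\eta$, with the interchange secured by sign-definiteness (Tonelli) and the integrability check via \eqref{2.25} and Lemma \ref{Lemma 2.3} --- exactly the two asymptotic regimes that matter, and where your hypothesis $\nu>0$ enters. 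What the paper's method buys is economy: no differentiation under the integral sign and no continuity argument at $\lambda=0^+$ are needed. What yours buys is more elementary ingredients (no Frullani, no $x^{-3/2}$ transform) and a cleaner derivation of \eqref{3.12} from \eqref{3.11}, at the price of the domination estimates on compact $\lambda$-intervals and the continuity of the right-hand sides at $\lambda=0$, which you correctly flag as dominated-convergence facts. One cosmetic slip: in your treatment of \eqref{3.10} the symbol $f$ must denote the integral itself, not the right-hand side of \eqref{3.10} (which carries a minus sign); with that reading your computation $f'(\lambda)=-1/\sqrt{2\lambda}$, $f(\lambda)=-\sqrt{2\lambda}$ is consistent and gives the claim.
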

\begin{proof}
We recall the following formula (cf. \cite[p.361]{3}):
for $-1<p<0$,
\[
\int_0^\infty y^{p-1}(e^{-y}-1)dy=\varGamma(p).
\]
Setting $p=-1/2$ and noting $\varGamma(-1/2)=-2\sqrt\pi$,
we obtain \eqref{3.10}.

Recall the formulae
\begin{align}
&\int_0^\infty e^{-\lambda x-\frac{\xi^2}{4x}} x^{-3/2}dx
=\frac{2\sqrt\pi}\xi e^{-\xi\sqrt\lambda},
\label{3.13}\\
&\int_0^\infty e^{-\frac{\xi^2}{4x}} x^{-3/2}dx
=2\int_0^\infty e^{-\frac{\xi^2 y^2}4}dy
=\frac{2\sqrt\pi}\xi,\quad\lambda,\xi>0.
\label{3.14}
\end{align}
and, for $\alpha,\beta\in\mathbb C$ with $\RE(\alpha)>0$ and $\RE(\beta)>0$
(cf. \cite[p.361]{3}),
\begin{equation}
\int_0^\infty \frac{e^{-\alpha x}-e^{-\beta x}}xdx=
\log\frac\beta\alpha
\label{3.15}
\end{equation}
Then we obtain
\[
\begin{split}
\int_0^\infty \!\!\! \int_0^\infty \frac{1-e^{-\lambda x}}{x^{3/2}}
e^{-\frac{\xi^2}{4x}-\alpha \xi}dxd\xi
&=2\sqrt\pi \int_0^\infty
\frac{e^{-\alpha\xi}-e^{-(\sqrt\lambda+\alpha)\xi}}\xi d\xi\\
&=2\sqrt\pi \log\frac{\sqrt\lambda+\alpha}\alpha.
\end{split}
\]
Hence, applying the Fubini theorem
to the right hand side of \eqref{3.11}, we obtain
\[
\begin{split}
\frac1{2\sqrt\pi}\int_0^\infty e^{\frac{z\xi}{\sqrt2 c}} d\xi
\int_0^\infty \frac{e^{-\lambda x}-1}{x^{3/2}}
e^{-\frac{\xi^2}{4x}}dx
&=\int_0^\infty e^{\frac{z\xi}{\sqrt2 c}}
\frac{e^{-\xi\sqrt\lambda}-1}\xi d\xi\\
&=\log \frac{-z/\sqrt2 c}{\sqrt\lambda-z/\sqrt2 c}
\end{split}
\]

The calculation of the right hand side of \eqref{3.12} is similar.
Since $G_\nu(x)$ is positive for $x>0$, we may apply 
the Fubini theorem and, applying \eqref{3.13}, \eqref{3.14}
and \eqref{3.15} again, we get
\[
\begin{split}
&\int_0^\infty \frac{e^{-\lambda x}-1}{2\sqrt{\pi x^3}} dx
\int_0^\infty \!\!\! \int_0^\infty \frac1{\eta G_\nu(\eta)}
e^{-\frac{\xi^2}{4x}-\frac{\xi\eta}{\sqrt2 c}}d\xi d\eta\\
&=\frac1{2\sqrt\pi}\int_0^\infty\frac{d\eta}{\eta G_\nu(\eta)}
\int_0^\infty e^{-\frac{\xi\eta}{\sqrt 2 c}} d\xi
\int_0^\infty \frac{e^{-\lambda x}-1}{x^{3/2}}
e^{-\frac{\xi^2}{4x}}dx\\
&=\int_0^\infty\frac{d\eta}{\eta G_\nu(\eta)}
\int_0^\infty e^{-\frac{\xi\eta}{\sqrt 2 c}}
\frac{e^{-\xi\sqrt\lambda-1}}\xi d\xi\\
&=\int_0^\infty\frac1{\eta G_\nu(\eta)}
\log \frac{\eta/\sqrt 2 c}{\sqrt\lambda-\eta/\sqrt 2 c}d\eta.
\end{split}
\]
This immediately implies \eqref{3.12}.
\end{proof}

\medskip

\begin{remark}\label{Remark 3.4}
If $\nu=0$, it follows from \eqref{2.25}
that the left hand side of \eqref{3.12} diverges.
\end{remark}

\medskip

We finally consider the case of $\nu=0$. Since $K_0'(x)=-K_1(x)$, we have
\[
\frac d{dx}\log K_0(x)=-\frac{K_1(x)}{K_0(x)}
=-1-\int_0^\infty \frac{d\eta}{\eta(\eta+x)G_0(\eta)}
\]
by Theorem \ref{Theorem 2.1}. Hence, by \eqref{3.9}, we get
\begin{equation}
\begin{split}
\phi_{a,b}^{(0)}(\lambda)
&=-(a-b)\sqrt{2\lambda}
-\int_{b\sqrt{2\lambda}}^{a\sqrt{2\lambda}} d\xi
\int_0^\infty\frac{d\eta}{\eta(\eta+x)G_0(\eta)}\\
&=-(a-b)\sqrt{2\lambda}-\int_0^\infty\frac1{\eta G_0(\eta)}
\log\frac{\eta+a\sqrt{2\lambda}}{\eta+b\sqrt{2\lambda}}d\eta.
\end{split}
\label{3.16}
\end{equation}

Note that
\[
\frac1{\eta G_0(\eta)}
\log\frac{\eta+a\sqrt{2\lambda}}{\eta+b\sqrt{2\lambda}}
\]
is integrable on $(0,\infty)$ by \eqref{2.25}.
For $c>0$ and $\varepsilon>0$, we write
\[
\int_\varepsilon^\infty \log\frac{\eta+c\sqrt{2\lambda}}\eta
\frac1{\eta G_0(\eta)}d\eta
=-\int_0^\infty \frac{e^{-\lambda x}-1}{2\sqrt{\pi x^3}} dx
\int_\varepsilon^\infty \frac{d\eta}{\eta G_0(\eta)}
\int_0^\infty e^{-\frac{\xi^2}{4x}-\frac{\xi\eta}{\sqrt2 c}} d\xi.
\]
This formula immediately implies
\begin{equation}
\begin{split}
&\int_\varepsilon^\infty\frac1{\eta G_0(\eta)}
\log\frac{\eta+a\sqrt{2\lambda}}{\eta+b\sqrt{2\lambda}}d\eta\\
&=\int_0^\infty \frac{e^{-\lambda x}-1}{2\sqrt{\pi x^3}} dx
\int_\varepsilon^\infty \frac{d\eta}{\eta G_0(\eta)}
\int_0^\infty e^{-\frac{\xi^2}{4x}}
\biggl(e^{-\frac{\xi\eta}{\sqrt2 b}}-e^{-\frac{\xi\eta}{\sqrt2 a}}
\biggr) d\xi.
\end{split}
\label{3.17}
\end{equation}
If we show that the integrand is integrable on
$(0,\infty)\times(0,\infty)\times(0,\infty)$, we can conclude
\[
\begin{split}
&\lim_{\varepsilon\to0}\int_\varepsilon^\infty
\log\frac{\eta+c\sqrt{2\lambda}}\eta
\frac1{\eta G_0(\eta)}d\eta\\
&=\int_0^\infty \frac{e^{-\lambda x}-1}{2\sqrt{\pi x^3}} dx
\int_0^\infty \frac{d\eta}{\eta G_0(\eta)}
\int_0^\infty e^{-\frac{\xi^2}{4x}}
\biggl(e^{-\frac{\xi\eta}{\sqrt2 b}}-e^{-\frac{\xi\eta}{\sqrt2 a}}
\biggr) d\xi
\end{split}
\]
and obtain Theorem \ref{Theorem 3.1} in the case of $\nu=0$.

Since $\lambda>0$ and $0<b<a$, the integrand on \eqref{3.17} is non-negative.
We have that
\[
\begin{split}
0&\leqq \frac{e^{-\lambda x}-1}{2\sqrt{\pi x^3}}
\frac1{\eta G_0(\eta)}e^{-\frac{\xi^2}{4x}}
\biggl(e^{-\frac{\xi\eta}{\sqrt2 b}}-e^{-\frac{\xi\eta}{\sqrt2 a}}
\biggr)\\
&=\frac{1-e^{-\lambda x}}{2\sqrt{\pi x^3}}
\frac1{\eta G_0(\eta)}e^{-\frac{\xi^2}{4x}}e^{-\frac{\xi\eta}{\sqrt2 a}}
\biggl(1-e^{-\frac{\xi\eta}{\sqrt2 b}+\frac{\xi\eta}{\sqrt2 a}}
\biggr)\\
&\leqq\frac{1-e^{-\lambda x}}{2\sqrt{2\pi x^3}}
\frac\xi{ G_0(\eta)}e^{-\frac{\xi^2}{4x}}e^{-\frac{\xi\eta}{\sqrt2 a}}
\biggl(\frac1b-\frac1a\biggr).
\end{split}
\]
From \eqref{3.12} and \eqref{3.13} we obtain
\[
\begin{split}
\int_0^\infty \!\!\! \int_0^\infty \xi e^{-\frac{\xi^2}{4x}}
e^{-\frac{\xi\eta}{\sqrt2 a}} \frac{1-e^{-\lambda x}}{x^{3/2}}d\xi dx
&=2\sqrt\pi\int_0^\infty e^{-\frac{\xi\eta}{\sqrt2 a}}
(1-e^{-\xi\sqrt\lambda}) d\xi\\
&\leqq 2\sqrt\pi\int_0^\infty e^{-\frac{\xi\eta}{\sqrt2 a}} d\xi
=\frac{2\sqrt{2\pi}a}\eta.
\end{split}
\]
Hence the integral
\[
\int_0^\infty \!\!\! \int_0^\infty \frac{e^{-\lambda x}-1}{2\sqrt{\pi x^3}}
\frac1{\eta G_0(\eta)}e^{-\frac{\xi^2}{4x}}
\biggl(e^{-\frac{\xi\eta}{\sqrt2 b}}-e^{-\frac{\xi\eta}{\sqrt2 a}}
\biggr) d\xi dx
\]
is bounded by a constant multiple of $1/\eta G_0(\eta)$,
which is integrable on $(0,\infty)$, and \eqref{3.17} converges
as $\varepsilon\to0$.


\section{The expected volume of the Wiener sausage}

\noindent
Let $r$ be a given positive number.
The Wiener sausage $\{W(t)\}_{t\geqq0}$
for the Brownian motion with radius $r$ is defined by
\[
W(t)=\{x\in{\mathbb R}^d\,;\,x+B(s)\in U\text{ for some $s\in[0,t]$}\}
\]
for $t\geqq0$, where $\{B(t)\}_{t\geqq0}$ is
a Brownian motion on ${\mathbb R}^d$ and $U$ is the closed ball
with center $0$ and radius $r$.
For $t>0$ let
\[
L(t)=\int_{{\mathbb R}^d\setminus U}P_x[\tau\leqq t]dx,
\]
where $\tau=\inf\{t\geqq0\,;\,B(t)\in U\}$
and $P_x$ is the probability measure of events related to
the Brownian motion starting from $x\in{\mathbb R}^d$.
It is easy to see that the expectation of the volume of $W(t)$
coincides with the sum of $L(t)$ and the volume of $U$.

In the case when $d$ is odd, the explicit form of $L(t)$
has already given. One and three dimensional cases are easy.
Indeed, we have that
\[
L(t)=
\begin{cases}
2\sqrt{2t/\pi}\quad&\text{if $d=1$,}\\
2\pi rt+4r^2\sqrt{2\pi t}\quad&\text{if $d=3$.}
\end{cases}
\]
These formulae can be obtained directly from
the well-known formula for\linebreak $P_x[\tau\leqq t]$.
For details, see \cite{5,11,14}.
In the higher dimensional cases, the authors~\cite{7}
recently obtained an explicit form of $P_x[\tau\leqq t]$.
However it is not of a convenient form for the integration on $x$.

We here consider the Laplace transform of $L$ given by
\begin{equation}
\int_0^\infty e^{-\lambda t}L(t)dt=\frac{S_{d-1}r^{d-1}}{\sqrt{2\lambda^3}}
\frac{K_{d/2}(r\sqrt{2\lambda})}{K_{d/2-1}(r\sqrt{2\lambda})},
\quad \lambda>0,
\label{4.1}
\end{equation}
where $S_{d-1}$ is the surface area of $d-1$ dimensional unit sphere
(cf. \cite{5}).
When $d$ is odd, since $K_{d/2}(x)/K_{d/2-1}(x)$
may be expressed by the ratio of polynomials for $x>0$,
and the right hand side of \eqref{4.1} may be represented by
the linear combination of rational functions of
the following four types:
\[
\frac1{\sqrt\lambda},\,\,\frac1\lambda,\,\,
\frac1{\sqrt{\lambda^3}},\,\,\frac1{\sqrt\lambda-z}.
\]
Hence the Laplace transform on \eqref{4.1} can be inverted.
When $d$ is odd and more than or equal to five,
Theorem 1.1 in \cite{6} shows that, for $t>0$
\begin{equation}
L(t)=S_{d-1}r^{d-2}
\biggl[\frac{(d-2)t}2+\frac{r^2}{d-4}
-\frac{\sqrt2 r^3}{\sqrt{\pi t}}\sum_{j=1}^{N_d}
\frac1{(z_j^{(d)})^2}\int_0^\infty 
e^{-\frac{r^2x^2}{2t}+z_j^{(d)} x}dx\biggr].
\label{4.2}
\end{equation}
Here we have used $z_j^{(d)}$ and $N_d$
instead of $z_{d/2-1,j}$ and $N(d/2-1)$, respectively.

Our goal in this section is to give similar results
in even dimensional cases by applying the results in
Theorem \ref{Theorem 2.1}.

\medskip

\begin{thm}\label{Theorem 4.1}
For $x>0$ we set $G^{(d)}(x)=G_{d/2-1}(x)$.

\noindent
{\rm (1)} If $d=2$, we have
\[
L(t)=2\pi r\biggl[\sqrt{\frac{2t}\pi}+\frac{\sqrt 2 r^2}{\sqrt{\pi t}}
\int_0^\infty \!\!\! \int_0^\infty \frac{xy-1+e^{-xy}}{y^3 G^{(2)}(y)}
e^{-\frac{r^2x^2}{2t}}dxdy\biggr].
\]
{\rm (2)} If $d=4$, we have
\[
L(t)=2\pi^2r^2\biggl[t+\frac{\sqrt 2 r^3}{\sqrt{\pi t}}
\int_0^\infty \!\!\! \int_0^\infty \frac{1-e^{-xy}}{y^3 G^{(4)}(y)}
e^{-\frac{r^2x^2}{2t}}dxdy\biggr].
\]
{\rm (3)} If $d\geqq6$ and $d$ is even, we have
\[
\begin{split}
L(t)=S_{d-1}r^{d-2}\biggl[&\frac{(d-2)t}2+\frac{r^2}{d-4}
-\frac{\sqrt 2 r^3}{\sqrt{\pi t}}\sum_{j=1}^{N_d}
\frac1{(z_j^{(d)})^2}\int_0^\infty e^{-\frac{r^2x^2}{2t}+z_j^{(d)}x}dx\\
&+\frac{(-1)^{d/2-1}\sqrt 2 r^3}{\sqrt{\pi t}}
\int_0^\infty \!\!\! \int_0^\infty \frac{e^{-xy}}{y^3 G^{(d)}(y)}
e^{-\frac{r^2x^2}{2t}}dxdy\biggr].
\end{split}
\]
\end{thm}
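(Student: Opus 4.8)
The plan is to feed Theorem~\ref{Theorem 2.1} into the Laplace transform \eqref{4.1} and then invert term by term. Put $\nu=d/2-1$, which is a nonnegative integer precisely because $d$ is even, so we are in part~(2) of Theorem~\ref{Theorem 2.1} with $\cos(\pi\nu)=(-1)^{d/2-1}$: formula \eqref{2.5} governs $d\in\{2,4\}$ (where $|\nu|<3/2$) and \eqref{2.6} governs $d\geqq6$ (where $|\nu|>3/2$). Writing $w=r\sqrt{2\lambda}$ and multiplying the resulting expression for $(K_{d/2}/K_{d/2-1})(w)$ by the prefactor $S_{d-1}r^{d-1}/\sqrt{2\lambda^3}$, the right-hand side of \eqref{4.1} splits into four kinds of pieces: the constant $1$, the simple pole $2\nu^+/w$, the finite sum $\sum_j 1/(z_{\nu,j}-w)$ (absent when $d=2,4$), and the Stieltjes-type integral $(-1)^{d/2-1}\int_0^\infty dx/\{x(x+w)G_{|\nu|}(x)\}$.

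First I would dispatch the elementary pieces. Since $\mathcal L^{-1}[\lambda^{-3/2}]=2\sqrt{t/\pi}$, the constant $1$ contributes a multiple of $\sqrt t$, recovering $S_{d-1}r^{d-1}\sqrt{2t/\pi}$; and since $1/w=1/(r\sqrt{2\lambda})$ turns the prefactor into a multiple of $\lambda^{-2}$, with $\mathcal L^{-1}[\lambda^{-2}]=t$, the term $2\nu^+/w$ produces the linear term $\tfrac12(d-2)S_{d-1}r^{d-2}t$. For the pole $1/(z_{\nu,j}-w)$ and for the kernel $1/\{x(x+w)\}$ of the integral I would exploit the subordination identities \eqref{3.13}--\eqref{3.15} of Lemma~\ref{Lemma 3.3}, which express $\lambda^{-3/2}$ times a factor $e^{-\xi\sqrt\lambda}$ through the Gaussian kernel $e^{-r^2x^2/(2t)}$; rewriting $1/(x+w)$ and $1/(z_{\nu,j}-w)$ by means of such exponentials and then applying the Fubini theorem—licit exactly as in the proof of Lemma~\ref{Lemma 3.3}, since $G_{|\nu|}>0$ on $(0,\infty)$—reproduces the pole-sum of \eqref{4.2}, now carrying $\int_0^\infty e^{-r^2x^2/(2t)+z_{\nu,j}x}dx$, and yields the new double integral over $(x,y)$ with weight $1/\{y^3G^{(d)}(y)\}$.

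The hard part, and what I expect to be the main obstacle, is the bookkeeping of the lower-order contributions that these manipulations unavoidably generate. Isolating the Gaussian integrals also throws off boundary terms of orders $t^{-1/2}$, $1$ and $t^{1/2}$, weighted by inverse powers of $y$ times $1/G^{(d)}(y)$, and by the small-argument behavior \eqref{2.25} one has $G_{|\nu|}(y)\sim y^{-2|\nu|}$ as $y\to0$, so these weights fail to be integrable at the origin when $d\leqq4$. This non-integrability forces the dimension-dependent regularization visible in the statement: one subtracts the first one or two terms of $e^{-xy}=1-xy+\cdots$, so the integrands become $xy-1+e^{-xy}$ for $d=2$, $1-e^{-xy}$ for $d=4$ and $e^{-xy}$ for $d\geqq6$. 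I would then have to show that the subtracted polynomial-in-$y$ pieces, together with the spurious $t^{1/2}$ and constant contributions of the elementary and pole inversions, reassemble into the clean explicit terms: the several $\sqrt t$-contributions must cancel (for $d\geqq4$) and the surviving constant must collapse to $r^2/(d-4)$. Both facts amount to identifying the $w^0$ and $w^1$ coefficients in the small-$w$ expansion of $(K_{d/2}/K_{d/2-1})(w)$, equivalently the sum rules over the zeros $z_{\nu,j}$ used for the odd-dimensional formula \eqref{4.2} in \cite{6}; the degeneracy of $r^2/(d-4)$ at $d=4$ is exactly why that case carries the extra subtraction $1-e^{-xy}$ and no constant term.

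Finally, the case $d=2$ (that is $\nu=0$) needs separate care at every step, as in the treatment of $\nu=0$ at the end of Section~3. Here the leading $1/w$ pole is replaced by the slower $1/\{w\log(2/w)\}$ behavior, the integral $\int_0^\infty dy/\{y^2G_0(y)\}$ diverges (cf.\ Remark~\ref{Remark 3.4}), and correspondingly the $\sqrt t$ term is not cancelled but survives as $\sqrt{2t/\pi}$. I would handle this through the doubly-subtracted integrand $xy-1+e^{-xy}$ and an $\varepsilon$-truncation of the $y$-integral followed by a dominated-convergence passage, after first verifying—by a bound of the same type as the one closing Section~3—that the triple integrand is integrable on $(0,\infty)^3$. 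Once the inversion is complete one may cross-check every case by computing the Laplace transform of the asserted $L(t)$ and recovering \eqref{4.1}.
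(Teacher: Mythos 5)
Your proposal follows essentially the same route as the paper's own proof: substitute Theorem~\ref{Theorem 2.1} into \eqref{4.1}, invert term by term using Gaussian subordination identities plus the Fubini theorem (the paper's \eqref{4.18} and Lemma~\ref{Lemma 4.7}), and handle the dimension-dependent subtractions $xy-1+e^{-xy}$, $1-e^{-xy}$, $e^{-xy}$ together with the cancellation of the spurious $\sqrt t$ and constant terms by matching the $w^0$ and $w^1$ coefficients of the small-$w$ expansion of $K_{d/2}/K_{d/2-1}$, which is precisely the content of the paper's Lemmas~\ref{Lemma 4.3}, \ref{Lemma 4.4} and \ref{Lemma 4.5}. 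The plan is correct and matches the paper's argument in all essential points.
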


\medskip

We set $T(t)=L(2r^2t)$ for $t>0$ and then deduce from \eqref{2.1} that,
for $\lambda>0$
\[
\int_0^\infty e^{-\lambda t}T(t)dt=
\frac{S_{d-1}r^d}{\sqrt{\lambda^3}}
\frac{K_{d/2}(\sqrt\lambda)}{K_{d/2-1}(\sqrt\lambda)}.
\]
For a proof of Theorem \ref{Theorem 4.1} we consider the function:
\[
\varSigma_\nu(\lambda)=\frac1{\sqrt{\lambda^3}}
\frac{K_{\nu+1}(\sqrt\lambda)}{K_\nu(\sqrt\lambda)},\quad \lambda>0.
\]
Let $T_\nu$ be the inverse Laplace transform of $\varSigma_\nu$.
Then we have the following, which immediately yields
Theorem \ref{Theorem 4.1}.

\medskip

\begin{thm}\label{Theorem 4.2}
Setting $\nu^+=\max\{\nu,0\}$, we have the following.

\noindent
{\rm (1)} If $|\nu|<1/2$,
\begin{equation}
T_\nu(t)=2\nu^+ t+2\sqrt{\frac t\pi}+\frac{\cos(\pi\nu)}{\sqrt{\pi t}}
\int_0^\infty \!\!\! \int_0^\infty
\frac{xy-1+e^{-xy}}{y^3 G_{|\nu|}(y)}e^{-\frac{x^2}{4t}}dxdy.
\label{4.3}
\end{equation}
{\rm (2)} If $1/2<|\nu|\leqq1$,
\begin{equation}
T_\nu(t)=2\nu^+ t-\frac{\cos(\pi\nu)}{\sqrt{\pi t}}
\int_0^\infty \!\!\! \int_0^\infty
\frac{1-e^{-xy}}{y^3 G_{|\nu|}(y)}e^{-\frac{x^2}{4t}}dxdy.
\label{4.4}
\end{equation}
{\rm (3)} If $1<|\nu|<3/2$,
\begin{equation}
T_\nu(t)=2\nu^+ t+\frac1{2(|\nu|-1)}+\frac{\cos(\pi\nu)}{\sqrt{\pi t}}
\int_0^\infty \!\!\! \int_0^\infty
\frac{e^{-xy}}{y^3 G_{|\nu|}(y)}e^{-\frac{x^2}{4t}}dxdy.
\label{4.5}
\end{equation}
{\rm (4)} If $|\nu|>3/2$ and $\nu-1/2$ is not an integer,
\begin{equation}
\begin{split}
T_\nu(t)=2\nu^+ t
&+\frac1{2(|\nu|-1)}
-\frac1{\sqrt{\pi t}}\sum_{j=1}^{N(\nu)}\frac1{z_{\nu,j}^2}
\int_0^\infty e^{-\frac{x^2}{4t}+z_{\nu,j}x}dx\\
&+\frac{\cos(\pi\nu)}{\sqrt{\pi t}}
\int_0^\infty \!\!\! \int_0^\infty
\frac{e^{-xy}}{y^3 G_{|\nu|}(y)}e^{-\frac{x^2}{4t}}dxdy.
\end{split}
\label{4.6}
\end{equation}
\end{thm}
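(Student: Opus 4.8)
The plan is to invert $\varSigma_\nu$ term by term, feeding the decomposition of $K_{\nu+1}/K_\nu$ from Theorem~\ref{Theorem 2.1} into its definition and writing $w=\sqrt\lambda$ throughout; equivalently, one checks that $\int_0^\infty e^{-\lambda t}T_\nu(t)\,dt=\varSigma_\nu(\lambda)$ for each claimed $T_\nu$. First I would reduce to $\nu\geqq0$: by $K_\mu=K_{-\mu}$ and \eqref{2.8} one has $\varSigma_\nu(\lambda)=\varSigma_{|\nu|}(\lambda)+2\nu/w^4$ when $\nu<0$, and since $\mathcal L^{-1}[w^{-4}]=\mathcal L^{-1}[\lambda^{-2}]=t$ this extra term contributes $2\nu t$, turning the $2|\nu|\,t$ of $T_{|\nu|}$ into $2\nu^+t=0$; so it suffices to treat $\nu\geqq0$ and to record the general statement through $\nu^+$ and $|\nu|$. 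For $\nu\geqq0$, inserting \eqref{2.5} or \eqref{2.6} and dividing by $w^3$ gives
\[
\varSigma_\nu(\lambda)=\frac1{w^3}+\frac{2\nu^+}{w^4}
+\sum_{j=1}^{N(\nu)}\frac1{w^3(z_{\nu,j}-w)}
+\frac{\cos(\pi\nu)}{w^3}\int_0^\infty\frac{dy}{y(y+w)G_\nu(y)}
\]
(the sum being empty when $N(\nu)=0$). The building blocks for the inversion are $\mathcal L^{-1}[\lambda^{-3/2}]=2\sqrt{t/\pi}$, $\mathcal L^{-1}[\lambda^{-1}]=1$, and, from \eqref{3.13}, $\mathcal L^{-1}[e^{-x\sqrt\lambda}]=\frac{x}{2\sqrt\pi}t^{-3/2}e^{-x^2/4t}$, whence $\mathcal L^{-1}[e^{-x\sqrt\lambda}/\sqrt\lambda]=(\pi t)^{-1/2}e^{-x^2/4t}$.

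The elementary pieces are immediate: $w^{-3}$ and $2\nu^+w^{-4}$ invert to $2\sqrt{t/\pi}$ and $2\nu^+t$. For a zero $z=z_{\nu,j}\in{\mathbb C}^-$, the partial-fraction split $1/\{w^3(z-w)\}=z^{-3}w^{-1}+z^{-2}w^{-2}+z^{-1}w^{-3}+z^{-3}(z-w)^{-1}$, combined with $1/(z-w)=-\int_0^\infty e^{zx}e^{-xw}\,dx$ and one integration by parts in $x$ (using $\RE(z)<0$), collapses to the term $-\{z^2\sqrt{\pi t}\}^{-1}\int_0^\infty e^{-x^2/4t+zx}\,dx$ appearing in \eqref{4.6}, together with a residual constant $z^{-2}$ and a residual multiple $2z^{-1}\sqrt{t/\pi}$ of $\sqrt t$ that must be carried along.

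For the Stieltjes term I would turn the kernel $1/\{w^3(y+w)\}$ into a Laplace transform in a fresh variable $x$. Starting from the algebraic identity
\[
\frac{y^2}{w^3(y+w)}=\frac1{w(y+w)}+\frac y{w^3}-\frac1{w^2}
\]
and the representation $1/\{w(y+w)\}=\int_0^\infty e^{-xy}e^{-xw}w^{-1}\,dx$, one checks that the three groupings $g=xy-1+e^{-xy}$, $g=e^{-xy}-1$ and $g=e^{-xy}$ satisfy $\int_0^\infty g\,e^{-xw}w^{-1}\,dx$ equal to $y^2/\{w^3(y+w)\}$, $-y/\{w^2(y+w)\}$ and $1/\{w(y+w)\}$ respectively. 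Dividing by $y^3G_\nu(y)$, integrating in $y$, and applying Fubini then produces exactly the double integrals in \eqref{4.3}, \eqref{4.4} and \eqref{4.5}--\eqref{4.6}, because after division the first grouping reproduces the full Stieltjes integrand $1/\{y(y+w)w^3G_\nu(y)\}$ while the other two differ from it only by the pieces $\cos(\pi\nu)C_1w^{-3}$ and $\cos(\pi\nu)C_2w^{-2}$, where $C_k=\int_0^\infty y^{-k}G_\nu(y)^{-1}\,dy$.

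The case distinction is dictated by convergence of $C_k$ near $y=0$: since $1/\{yG_\nu(y)\}\sim\kappa_\nu y^{2\nu-1}$ by \eqref{2.25}, the integral $C_k$ exists only for $\nu>(k-1)/2$. Thus for $|\nu|<1/2$ neither $C_1$ nor $C_2$ may be split off and one must keep the combined $g=xy-1+e^{-xy}$, for which the $w^{-3}$ from the leading $1$ survives as $2\sqrt{t/\pi}$; for $1/2<|\nu|\leqq1$ only $C_1$ is available, giving $g=e^{-xy}-1$; and for $|\nu|>1$ both are available, giving $g=e^{-xy}$ and a genuine constant. I expect the main obstacle to be the bookkeeping of these split-off constants, which forces two sum rules. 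Matching the coefficients of $w^0$ and $w^1$ in the small-$w$ expansion of $K_{\nu+1}(w)/K_\nu(w)$---computed from the series \eqref{2.19}, \eqref{2.20} to be $0$ and $1/\{2(\nu-1)\}$---against the right-hand side of \eqref{2.5} or \eqref{2.6} yields
\[
\sum_{j=1}^{N(\nu)}\frac1{z_{\nu,j}}+\cos(\pi\nu)\int_0^\infty\frac{dy}{y^2G_\nu(y)}=-1,
\qquad
\sum_{j=1}^{N(\nu)}\frac1{z_{\nu,j}^2}-\cos(\pi\nu)\int_0^\infty\frac{dy}{y^3G_\nu(y)}=\frac1{2(\nu-1)}.
\]
The first rule cancels all the residual contributions to $\sqrt t$ (those from the leading $1$, from $\cos(\pi\nu)C_1$, and from $\sum_j2z_{\nu,j}^{-1}\sqrt{t/\pi}$) in cases (2)--(4), and the second identifies the surviving constant as $1/\{2(|\nu|-1)\}$ in cases (3)--(4). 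The remaining work is to justify the Fubini interchanges and the limit as the lower cutoff in $y$ tends to $0$---where the individually regrouped integrals diverge although their combination stays finite---in close analogy with the limiting arguments already used for Lemma~\ref{Lemma 3.3} and in the proof of Theorem~\ref{Theorem 2.1}.
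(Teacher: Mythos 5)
Your proposal is correct and is essentially the paper's own proof: you insert Theorem~\ref{Theorem 2.1} into $\varSigma_\nu$, regroup the Stieltjes kernel via $xy-1+e^{-xy}$, $e^{-xy}-1$, $e^{-xy}$ according to whether $\int_0^\infty y^{-2}G_{|\nu|}(y)^{-1}\,dy$ and $\int_0^\infty y^{-3}G_{|\nu|}(y)^{-1}\,dy$ converge (this is the paper's Lemma~\ref{Lemma 4.7} together with \eqref{4.20}--\eqref{4.21}), split the zero terms by partial fractions exactly as in the passage following \eqref{4.16}, and cancel the leftover $\sqrt{t}$ and constant contributions with precisely the two sum rules of Lemma~\ref{Lemma 4.5}, derived as you propose by matching small-$w$ expansions of the two sides of \eqref{2.5}--\eqref{2.6}. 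The only blemishes are cosmetic: your $C_k$ bookkeeping is internally off by one (the split-off integrals are $\int_0^\infty y^{-2}G_{|\nu|}(y)^{-1}\,dy$ and $\int_0^\infty y^{-3}G_{|\nu|}(y)^{-1}\,dy$), and the expansion coefficients $0$ and $1/\{2(|\nu|-1)\}$ cannot be read off from the leading-order formulae \eqref{2.19}--\eqref{2.20} alone --- the paper devotes Lemmas~\ref{Lemma 4.3}--\ref{Lemma 4.4} to establishing them from an integral representation of $K_\mu$, though carrying the Bessel series to higher order would serve as well.
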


\medskip

From now on, we shall treat $K_{\nu+1}/K_\nu$
as the function on the half real line $(0,\infty)$.
For our purpose we need three lemmas.

\medskip

\begin{lemma}\label{Lemma 4.3}
We have that, as $x\downarrow0$,
\[
\frac{K_{\nu+1}(x)}{K_\nu(x)}=
\begin{cases}
\dfrac{2\nu^+}x+o(1)&\text{if $\dfrac12<|\nu|\leqq1$,}\\
\dfrac{2\nu^+}x+\dfrac x{2(|\nu|-1)}+o(x)&\text{if $1<|\nu|<\dfrac32$,}\\
\dfrac{2\nu^+}x+\dfrac x{2(|\nu|-1)}+o(x^2)\quad
&\text{if $|\nu|>\dfrac32$, $\nu-\dfrac12\not\in\mathbb Z$.}
\end{cases}
\]
\end{lemma}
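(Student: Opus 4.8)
The plan is to derive all three expansions from the recurrence \eqref{2.7} together with the small-argument asymptotics \eqref{2.19} alone, so that nothing beyond the ingredients already collected for Theorem~\ref{Theorem 2.1} is needed.

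First I would reduce to nonnegative order. Writing $\mu=|\nu|$, for $\nu\geqq0$ one has $\nu^+=\nu=\mu$, while for $\nu<0$ one has $\nu^+=0$ and $2\nu=-2\mu$, so \eqref{2.8} gives $K_{\nu+1}(x)/K_\nu(x)=K_{\mu+1}(x)/K_\mu(x)-2\mu/x$. In either case one has $K_{\nu+1}(x)/K_\nu(x)-2\nu^+/x=K_{\mu+1}(x)/K_\mu(x)-2\mu/x$, so it suffices to establish, for $\mu>1/2$, that this last remainder has the expansion asserted in Lemma~\ref{Lemma 4.3} with $|\nu|$ replaced by $\mu$. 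The cancellation of the $1/x$ terms in the case $\nu<0$ is exactly what makes the leading coefficient $2\nu^+$ rather than $2\nu$.

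The key step rewrites the remainder through \eqref{2.7}: dividing the identity $K_{\mu+1}(x)-K_{\mu-1}(x)=(2\mu/x)K_\mu(x)$ by $K_\mu(x)$ yields $K_{\mu+1}(x)/K_\mu(x)-2\mu/x=K_{\mu-1}(x)/K_\mu(x)$, so everything is reduced to the behaviour of $K_{\mu-1}/K_\mu$ as $x\downarrow0$. In case (1), where $1/2<\mu\leqq1$, I would use $K_{\mu-1}=K_{1-\mu}$ and read off from \eqref{2.19} that $K_{1-\mu}(x)/K_\mu(x)$ behaves like a constant times $x^{2\mu-1}$ when $\mu<1$ and like $x\log(1/x)$ when $\mu=1$; since $2\mu-1>0$, both are $o(1)$, which is the assertion of case (1).

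For $\mu>1$ I would apply \eqref{2.7} once more, at order $\mu-1$, to obtain the one-step continued fraction
\[
\frac{K_{\mu-1}(x)}{K_\mu(x)}=\frac{x}{2(\mu-1)+x\,K_{\mu-2}(x)/K_{\mu-1}(x)},
\]
after which the result follows from estimating the correction $x\,K_{\mu-2}(x)/K_{\mu-1}(x)$ by \eqref{2.19} (writing $K_{\mu-2}=K_{|\mu-2|}$ when $\mu<2$). For $1<\mu<3/2$ this correction is only $o(1)$, so the continued fraction equals $\{x/(2(\mu-1))\}(1+o(1))=x/(2(\mu-1))+o(x)$, giving case (2); for $\mu>3/2$ the same estimate sharpens to $o(x)$, so the remainder improves to $x/(2(\mu-1))+o(x^2)$, giving case (3). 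I expect this last bookkeeping to be the only delicate point: one must verify that $x\,K_{\mu-2}(x)/K_{\mu-1}(x)=o(x)$ holds uniformly as $\mu$ passes through the value $2$, where $K_{\mu-2}$ changes from power behaviour to the logarithmic behaviour of $K_0$, the relevant exponent $\min\{2,2\mu-2\}$ remaining strictly larger than $1$ throughout the range $\mu>3/2$.
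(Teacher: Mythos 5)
Your proof is correct, but it takes a genuinely different route from the paper's. The paper first proves Lemma \ref{Lemma 4.4}, extracting successively finer small-$x$ expansions of $K_\mu(x)$ from the integral representation of $K_\mu$ (Taylor expansion with remainder control, dominated convergence, and the duplication formula \eqref{4.8}), and then divides the expansions of $K_{\nu+1}$ and $K_\nu$; in the third case the stated $o(x^2)$ only emerges after a cancellation between the $x^3$ terms of the two expansions. You instead observe that, after the reduction \eqref{2.8} to $\mu=|\nu|$, the recurrence \eqref{2.7} turns the quantity to be estimated into $K_{\mu-1}/K_\mu$, and a second application of \eqref{2.7} gives the continued-fraction form $K_{\mu-1}(x)/K_\mu(x)=x/\{2(\mu-1)+\epsilon(x)\}$ with $\epsilon(x)=xK_{\mu-2}(x)/K_{\mu-1}(x)$, so that only the leading-order asymptotics \eqref{2.19} are needed: $\epsilon(x)=O(x^{\min\{2,\,2\mu-2\}})$ (with an extra factor $\log(1/x)$ at $\mu=2$), hence $\epsilon(x)=o(1)$ for $1<\mu<3/2$ and $\epsilon(x)=o(x)$ for $\mu>3/2$, and the identity $x/(c+\epsilon)=x/c-x\epsilon\,c^{-2}\{1+o(1)\}$ converts these into the claimed $o(x)$ and $o(x^2)$ errors---the cancellation the paper must verify by hand is built into the algebra here. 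Your case checks at $\mu=2$ (logarithmic $K_0$) and for $3/2<\mu<2$ (where $K_{\mu-2}=K_{2-\mu}$) are exactly the right ones, though the worry about ``uniformity as $\mu$ passes through $2$'' is beside the point, since $\mu$ is fixed throughout and only the three fixed sub-cases need checking. What your argument buys is economy: nothing beyond \eqref{2.7}, $K_\mu=K_{-\mu}$ and \eqref{2.19} is used. What the paper's heavier route buys is the explicit higher-order expansion of Lemma \ref{Lemma 4.4}, which is not a detour in context: it is reused in Section 6 (the expansion \eqref{6.7} and the coefficients $b_n$ entering Theorem \ref{Theorem 6.3}), so the paper needs Lemma \ref{Lemma 4.4} regardless.
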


\medskip

In order to prove Lemma \ref{Lemma 4.3}, we need to show
an asymptotic behavior of $K_\mu(x)$ as $x\downarrow0$.

\medskip

\begin{lemma}\label{Lemma 4.4}
If $n+1/2<\mu<n+3/2$ for an integer $n\geqq1$, we have
\[
K_\mu(x)
=\frac{\varGamma(\mu)}2\biggl(\frac 2x\biggr)^\mu e^{-x}
\biggl\{ 1+x+\sum_{k=2}^{2n+1}\binom{\mu-\frac12}k
\frac{\varGamma(2\mu-k)}{\varGamma(2\mu)}(2x)^k+o(x^{2n+1})\biggr\}
\]
as $x\downarrow0$. Especially, we have
\begin{equation}
K_\mu(x)=
\frac{\varGamma(\mu)}2\biggl(\frac 2x\biggr)^\mu e^{-x}
\biggl\{ 1+x+\frac12\frac{2\mu-3}{2\mu-2}x^2
+\frac16\frac{2\mu-5}{2\mu-2}x^3+o(x^3)\biggr\}.
\label{4.7}
\end{equation}
\end{lemma}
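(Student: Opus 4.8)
The plan is to read the small-$x$ expansion off the Poisson-type integral representation of the Macdonald function in which the exponential is already extracted,
\[
K_\mu(x)=\sqrt{\frac\pi{2x}}\,\frac{e^{-x}}{\varGamma(\mu+\tfrac12)}
\int_0^\infty e^{-t}t^{\mu-1/2}\Bigl(1+\frac t{2x}\Bigr)^{\mu-1/2}dt,
\]
valid for $\mu>-1/2$ (cf. \cite{19}). First I would rewrite $(1+t/2x)^{\mu-1/2}=(2x)^{-(\mu-1/2)}t^{\mu-1/2}(1+2x/t)^{\mu-1/2}$, turning the integral into $(2x)^{-(\mu-1/2)}\int_0^\infty e^{-t}t^{2\mu-1}(1+2x/t)^{\mu-1/2}dt$. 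Collecting $\sqrt{\pi/2x}\,(2x)^{-(\mu-1/2)}=\sqrt\pi\,(2x)^{-\mu}$ and using the duplication formula $\varGamma(2\mu)=2^{2\mu-1}\pi^{-1/2}\varGamma(\mu)\varGamma(\mu+\tfrac12)$ to simplify $\sqrt\pi(2x)^{-\mu}\varGamma(2\mu)/\varGamma(\mu+\tfrac12)=\tfrac12\varGamma(\mu)(2/x)^\mu$, the claimed prefactor $\tfrac{\varGamma(\mu)}2(2/x)^\mu e^{-x}$ emerges once the integral is normalized by $\varGamma(2\mu)$, so that the bracket in the statement equals $\varGamma(2\mu)^{-1}\int_0^\infty e^{-t}t^{2\mu-1}(1+2x/t)^{\mu-1/2}dt$.

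Next I would expand $(1+2x/t)^{\mu-1/2}$ by the binomial theorem and integrate term by term. Since $\int_0^\infty e^{-t}t^{2\mu-1-k}dt=\varGamma(2\mu-k)$ whenever $k<2\mu$, and the hypothesis $\mu>n+\tfrac12$ gives $2\mu>2n+1\geqq k$ for all $k\leqq 2n+1$, the $k$-th term contributes exactly $\binom{\mu-\frac12}{k}\varGamma(2\mu-k)(2x)^k$; dividing by $\varGamma(2\mu)$ gives the summand in the statement. A direct check shows $k=0$ gives $1$ and $k=1$ gives $(\mu-\tfrac12)(2\mu-1)^{-1}\cdot 2x=x$, which accounts for the explicit $1+x$. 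Evaluating the $k=2$ and $k=3$ coefficients and simplifying with $\mu-\tfrac12=(2\mu-1)/2$ and $\mu-\tfrac32=(2\mu-3)/2$ yields $\tfrac12\tfrac{2\mu-3}{2\mu-2}x^2$ and $\tfrac16\tfrac{2\mu-5}{2\mu-2}x^3$, which is the ``especially'' formula once $n\geqq1$.

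The main work is the error estimate, because the binomial series for $(1+2x/t)^{\mu-1/2}$ converges only for $t>2x$, so term-by-term integration is not literally justified near $t=0$. I would split $\int_0^\infty$ at $t=2x$. On $t>2x$ I would apply Taylor's theorem to $(1+u)^{\mu-1/2}$ with $u=2x/t\in(0,1)$; the remainder after the $(2n+1)$-st term is $O(u^{2n+2})$ uniformly on $(0,1)$, producing a contribution bounded by $C(2x)^{2n+2}\int_{2x}^\infty e^{-t}t^{2\mu-2n-3}dt$. Here the second constraint $\mu<n+\tfrac32$ forces the exponent $2\mu-2n-3$ into $(-2,0)$, so this integral is $O(1)$ when $\mu>n+1$ and $O(x^{2\mu-2n-2})$ when $\mu\leqq n+1$, making the whole contribution $O(x^{2n+2})+O(x^{2\mu})$. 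On $0<t<2x$ I would use $1+2x/t\leqq 4x/t$ to bound $\int_0^{2x}e^{-t}t^{2\mu-1}(4x/t)^{\mu-1/2}dt=O(x^{2\mu})$, together with the tails $\int_0^{2x}e^{-t}t^{2\mu-1-k}dt\leqq(2x)^{2\mu-k}/(2\mu-k)$ of the finitely many retained terms, which are also $O(x^{2\mu})$ after multiplication by $(2x)^k$. Since $2\mu>2n+1$, every error term is $o(x^{2n+1})$, finishing the proof. The delicate point is precisely that $2\mu-2n-3$ may be $\leqq-1$, so the split at $t=2x$ must be handled carefully and both inequalities $n+\tfrac12<\mu<n+\tfrac32$ are genuinely used.
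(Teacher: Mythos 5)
Your proof is correct, and it rests on the same pillars as the paper's: the same integral representation $K_\mu(x)=\sqrt{\pi/2x}\,e^{-x}\varGamma(\mu+\tfrac12)^{-1}\int_0^\infty e^{-t}t^{\mu-1/2}(1+t/2x)^{\mu-1/2}dt$, the same rewriting that extracts the prefactor $\tfrac{\varGamma(\mu)}2(2/x)^\mu e^{-x}$ via the duplication formula, and the same identification of the coefficients $\binom{\mu-1/2}{k}\varGamma(2\mu-k)/\varGamma(2\mu)$. The genuine difference is in the remainder control. You split the integral at $t=2x$ because the binomial \emph{series} diverges for $t<2x$; but that worry evaporates if, as the paper does, one invokes Taylor's theorem with Lagrange remainder, which is a finite identity valid for every $u=2x/t>0$ and involves no infinite series: the paper expands to order $2n$ and writes the remainder as $\binom{\mu-1/2}{2n+1}u^{2n+1}(1+\xi u)^{\mu-2n-3/2}$ with $\xi\in[0,1]$. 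The hypothesis $\mu<n+\tfrac32$ makes that exponent negative, so the Lagrange factor is at most $1$, while $\mu>n+\tfrac12$ makes $\int_0^\infty e^{-t}t^{2\mu-2n-2}dt$ finite; dominated convergence then shows that this single remainder term delivers both the $k=2n+1$ coefficient and the $o(x^{2n+1})$ error in one stroke, with no splitting and no incomplete-gamma corrections. (The same observation would simplify your own argument: your order-$(2n+2)$ Lagrange bound is uniform on all of $(0,\infty)$, not just $(0,1)$, since $\mu-\tfrac12-(2n+2)<0$.) Your route is more laborious but sound, and your two inequalities on $\mu$ enter exactly where the paper's do. One small slip: when $\mu=n+1$ the exponent $2\mu-2n-3$ equals $-1$, so $\int_{2x}^\infty e^{-t}t^{2\mu-2n-3}dt$ is of order $\log(1/x)$ rather than $O(x^{2\mu-2n-2})=O(1)$; the resulting contribution $O\bigl(x^{2n+2}\log(1/x)\bigr)$ is still $o(x^{2n+1})$, so your conclusion is unaffected.
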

\begin{proof}
It is well-known that, for $\mu>-1/2$
\[
\begin{split}
K_\mu(x)
&=\sqrt{\frac\pi{2x}}\frac{e^{-x}}{\varGamma(\mu+1/2)}
\int_0^\infty e^{-y}y^{\mu-1/2}\biggl(1+\frac y{2x}\biggr)^{\mu-1/2}dy\\
&=\frac{\sqrt \pi}{(2x)^\mu}\frac{e^{-x}}{\varGamma(\mu+1/2)}
\int_0^\infty e^{-y}y^{2\mu-1}\biggl(1+\frac{2x}y\biggr)^{\mu-1/2}dy\\
\end{split}
\]
(cf. \cite[p.140]{13}, \cite[p.206]{19}). The Taylor formula yields that
\[
\begin{split}
&\int_0^\infty e^{-y}y^{2\mu-1}\biggl(1+\frac{2x}y\biggr)^{\mu-1/2}dy\\
&=\int_0^\infty e^{-y}y^{2\mu-1} \biggl\{1+\sum_{k=1}^{2n}
\binom{\mu-\frac12}k
\biggl(\frac{2x}y\biggr)^k\\
&\hphantom{=\int_0^\infty e^{-y}y^{2\mu-1} \biggl\{1\,\,}
+\binom{\mu-\frac12}{2n+1}\biggl(\frac{2x}y\biggr)^{2n+1}
\biggl(1+\frac{2\xi x}y\biggr)^{\mu-2n-3/2}\biggr\}dy
\end{split}
\]
for some $\xi\in[0,1]$.
Since $2\mu-2n-2>-1$. Since $\mu-2n-3/2<0$, we have
\[
\begin{split}
&\int_0^\infty e^{-y}y^{2\mu-2n-2}
\biggl(1+\frac{2x}y\biggr)^{\mu-2n-3/2}dy\\
&\leqq\int_0^\infty e^{-y}y^{2\mu-2n-2}
\biggl(1+\frac{2\xi x}y\biggr)^{\mu-2n-3/2}dy\\
&\leqq\int_0^\infty e^{-y}y^{2\mu-2n-2}dy=\varGamma(2\mu-2n-1).
\end{split}
\]
Hence the dominated convergence theorem yields that
\[
\lim_{x\downarrow0}\int_0^\infty e^{-y}y^{2\mu-2n-2}
\biggl(1+\frac{2x}y\biggr)^{\mu-2n-3/2}dy
=\varGamma(2\mu-2n-1).
\]
Therefore we obtain
\[
\begin{split}
K_\mu(x)=\frac{\sqrt \pi}{(2x)^\mu}\frac{e^{-x}}{\varGamma(\mu+1/2)}
\biggl[&\varGamma(2\mu)+(2\mu-1)\varGamma(2\mu-1)x\\
&+\sum_{k=2}^{2n}\binom{\mu-\frac12}k \varGamma(2\mu-k)(2x)^k\\
&+\binom{\mu-\frac12}{2n+1}\varGamma(2\mu-2n-1)(2x)^{2n+1}\{1+o(1)\} \biggr]
\end{split}
\]
as $x\downarrow0$. With the help of the formula
\begin{equation}
2^{2z-1}\varGamma(z)\varGamma\biggl(z+\frac12\biggr)=\sqrt\pi \varGamma(2z)
\label{4.8}
\end{equation}
(cf. \cite[p.3]{13}), we easily obtain the assertion.
\end{proof}

\medskip

\begin{proof}[Proof of Lemma \ref{Lemma 4.3}]
We only consider the case of $\nu\geqq0$.
If we show this case, the result for $\nu<0$ follows from \eqref{2.8}.

If $\nu>3/2$ and $\nu-1/2$ is not an integer,
it follows from \eqref{4.7} that, as $x\downarrow0$,
\[
\frac{K_{\nu+1}(x)}{K_\nu(x)}
=\frac{2\nu}x+\frac x{2(\nu-1)}+o(x^2).
\]
When $1<\mu<3/2$, we use the formula
\[
\begin{split}
&\int_0^\infty e^{-y}y^{2\mu-1}\biggl(1+\frac{2x}y\biggr)^{\mu-1/2}dy\\
&=\int_0^\infty e^{-y}y^{2\mu-1} \biggl\{
1+\binom{\mu-\frac12}1\frac{2x}y+\binom{\mu-\frac12}2
\biggl(\frac{2x}y\biggr)^2
\biggl(1+\frac{2\xi x}y\biggr)^{\mu-5/2}\biggr\}dy
\end{split}
\]
in a similar way to Lemma \ref{Lemma 4.4}. Then we have
\[
K_\mu(x)=\frac{\varGamma(\mu)}2\biggl(\frac 2x\biggr)^\mu e^{-x}
\biggl\{ 1+x+\frac12\frac{2\mu-3}{2\mu-2}x^2+o(x^2)\biggr\}.
\]
Combining it with \eqref{4.7}, we deduce that, if $1<\nu<3/2$,
\[
\frac{K_{\nu+1}(x)}{K_\nu(x)}
=\frac{2\nu}x+\frac x{2(\nu-1)}+o(x).
\]

In the case of $1/2<\mu\leqq1$, the calculation is simpler.
Indeed, by the same calculation as \eqref{4.7}, we can easily obtain
\begin{equation}
K_\mu(x)=\frac{\varGamma(\mu)}2\biggl(\frac 2x\biggr)^\mu e^{-x}
\{ 1+x+o(x)\}
\label{4.9}
\end{equation}
in virtue of the formula
\[
\begin{split}
&\int_0^\infty e^{-y}y^{2\mu-1}\biggl(1+\frac{2x}y\biggr)^{\mu-1/2}dy\\
&=\int_0^\infty e^{-y}y^{2\mu-1} \biggl\{
1+\binom{\mu-\frac12}1\frac{2x}y
\biggl(1+\frac{2\xi x}y\biggr)^{\mu-3/2}\biggr\}dy.
\end{split}
\]
Hence we deduce from \eqref{4.7} and \eqref{4.9} that
\[
\frac{K_{\nu+1}(x)}{K_\nu(x)}
=\frac{2\nu}x \frac{1+x+\dfrac12\dfrac{2\nu-1}{2\nu}x^2+o(x^2)}{1+x+o(x)}
=\frac{2\nu}x+o(1).
\]
We finish the proof of Lemma \ref{Lemma 4.3}.
\end{proof}

\medskip

For an integer $k\geqq1$ we set
\[
\zeta_{\nu,k}=\sum_{j=1}^{N(\nu)}\frac1{z_{\nu,j}^k},\quad
\varrho_{\nu,k}=\int_0^\infty \frac{dy}{y^{k+1}G_{|\nu|}(y)}.
\]
In virtue of Theorem \ref{Theorem 2.1} and Lemma \ref{Lemma 4.3},
we can derive the connection between $\zeta_{\nu,k}$ and $\varrho_{\nu,k}$.

\medskip

\begin{lemma}\label{Lemma 4.5}
{\rm (1)} If $1/2<|\nu|\leqq1$, we have
\begin{equation}
1+\varrho_{\nu,1}\cos(\pi\nu)=0.
\label{4.10}
\end{equation}
{\rm (2)} If $1<|\nu|<3/2$, we have
\begin{equation}
\begin{cases}
1+\varrho_{\nu,1}\cos(\pi\nu)=0,\\
-\varrho_{\nu,2}\cos(\pi\nu)=\dfrac1{2(|\nu|-1)}.
\end{cases}
\label{4.11}
\end{equation}
{\rm (3)} If $|\nu|>3/2$ and $\nu-1/2$ is not an integer,
we have
\begin{equation}
\begin{cases}
1+\zeta_{\nu,1}+\varrho_{\nu,1}\cos(\pi\nu)=0,\\
\zeta_{\nu,2}-\varrho_{\nu,2}\cos(\pi\nu)=\dfrac1{2(|\nu|-1)}.
\end{cases}
\label{4.12}
\end{equation}
\end{lemma}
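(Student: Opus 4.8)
The plan is to evaluate the identities of Theorem~\ref{Theorem 2.1} on the positive real axis $w=x>0$, expand both sides in powers of $x$ as $x\downarrow0$, and read off \eqref{4.10}--\eqref{4.12} by matching coefficients against Lemma~\ref{Lemma 4.3}. I would split exactly along the threshold $|\nu|=3/2$ as in the statement: for $1/2<|\nu|<3/2$ I use \eqref{2.5}, while for $|\nu|>3/2$ with $\nu-1/2\notin\mathbb Z$ I use \eqref{2.6}. In every case the term $1+2\nu^+/x$ on the right of Theorem~\ref{Theorem 2.1} accounts for the leading $2\nu^+/x$ in Lemma~\ref{Lemma 4.3} together with a constant $1$, so the entire content of the lemma is the small-$x$ expansion of the remaining two pieces.

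The finite sum is the easy piece. Since every zero $z_{\nu,j}$ lies in $\mathbb C^-$ and is therefore bounded away from $[0,\infty)$, the map $x\mapsto\sum_{j}(z_{\nu,j}-x)^{-1}$ is analytic near $x=0$ and expands as $\zeta_{\nu,1}+\zeta_{\nu,2}x+O(x^2)$. The Stieltjes integral requires more care. Writing $1/(y+x)=1/y-x/\{y(y+x)\}$ gives
\[
\int_0^\infty\frac{dy}{y(y+x)G_{|\nu|}(y)}
=\varrho_{\nu,1}-x\int_0^\infty\frac{dy}{y^2(y+x)G_{|\nu|}(y)},
\]
and as $x\downarrow0$ the integrand of the last integral increases monotonically to $1/\{y^3G_{|\nu|}(y)\}$; hence by monotone convergence it tends to $\varrho_{\nu,2}$ whenever $\varrho_{\nu,2}<\infty$. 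This yields the expansion $\varrho_{\nu,1}-\varrho_{\nu,2}x+o(x)$ when $\varrho_{\nu,2}$ converges, and merely $\varrho_{\nu,1}+o(1)$ when only $\varrho_{\nu,1}$ does.

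The convergence bookkeeping is the point that forces the three separate cases. Using \eqref{2.25} near the origin one has $1/\{y^{k+1}G_{|\nu|}(y)\}\sim\kappa_{|\nu|}\,y^{2|\nu|-k-1}$, while Lemma~\ref{Lemma 2.3} gives exponential decay at infinity; thus $\varrho_{\nu,k}<\infty$ precisely when $k<2|\nu|$. For $1/2<|\nu|\leqq1$ only $\varrho_{\nu,1}$ converges, so the integral contributes $\varrho_{\nu,1}+o(1)$, and comparing the constant terms with the $2\nu^+/x+o(1)$ of the first case of Lemma~\ref{Lemma 4.3} forces $1+\varrho_{\nu,1}\cos(\pi\nu)=0$, which is \eqref{4.10}. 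For $1<|\nu|<3/2$ both $\varrho_{\nu,1}$ and $\varrho_{\nu,2}$ converge but $\varrho_{\nu,3}$ does not, so the expansion is valid to order $x$; matching the constant term again gives the first line of \eqref{4.11}, and matching the coefficient of $x$ against $1/\{2(|\nu|-1)\}$ gives the second. For $|\nu|>3/2$ the same order-$x$ expansion of the integral holds, now together with the analytic contribution $\zeta_{\nu,1}+\zeta_{\nu,2}x$ of the finite sum; the constant and linear coefficients then produce the two identities of \eqref{4.12}.

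I expect the only genuine, though modest, obstacle to be the justification of the integral expansion to order $x$, that is, controlling $\int_0^\infty dy/\{y^2(y+x)G_{|\nu|}(y)\}$ uniformly near the origin. The monotone-convergence argument above dispatches this cleanly once the threshold $k<2|\nu|$ is in hand, after which the remainder of the proof is purely the algebraic matching of coefficients.
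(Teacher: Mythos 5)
Your proposal is correct and follows essentially the same route as the paper: evaluate \eqref{2.5}/\eqref{2.6} at $w=x>0$, expand the finite sum and the Stieltjes integral to first order in $x$ via the splitting $1/(y+x)=1/y-x/\{y(y+x)\}$ and the asymptotics \eqref{2.25}/\eqref{4.13} of $G_{|\nu|}$, then match coefficients against Lemma~\ref{Lemma 4.3}. The only (immaterial) difference is that you justify the passage to the limit by monotone convergence where the paper invokes dominated convergence.
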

\begin{proof}
Recall that, for $\nu\neq0$
\begin{equation}
G_{|\nu|}(x)=
\begin{cases}
\dfrac\pi{2x}e^{2x}\{1+o(1)\}\quad&\text{as $x\to\infty$,}\\
\dfrac1{\kappa_{|\nu|} x^{2|\nu|}}\{1+o(1)\}\quad&\text{as $x\downarrow0$}.
\end{cases}
\label{4.13}
\end{equation}
If $1/2<|\nu|\leqq1$, we obtain by \eqref{4.13} that $1/y^2 G_{|\nu|}(y)$
is asymptotically equal to $\kappa_{|\nu|}y^{2|\nu|-2}$ as $y\downarrow0$.
This implies the convergence of $\varrho_{\nu,1}$.
The dominated convergence theorem shows that, as $x\downarrow0$,
\[
\int_0^\infty \frac{dy}{(y+x)y G_{|\nu|}(y)}=\varrho_{\nu,1}+o(1).
\]
Hence we deduce from \eqref{2.5} that
\[
\frac{K_{\nu+1}(x)}{K_\nu(x)}=1+\frac{2\nu^+}x
+\cos(\pi\nu)\varrho_{\nu,1}+o(1).
\]
With the help of Lemma \ref{Lemma 4.3}, we conclude \eqref{4.10}.

In the case of $1<|\nu|<3/2$,  $1/y^3 G_{|\nu|}(y)$ is integrable
by \eqref{4.13} and we have
\begin{equation}
\begin{split}
\int_0^\infty \frac{dy}{(y+x)y G_{|\nu|}(y)}
&=\int_0^\infty\biggl\{\frac1y-\frac x{y(y+x)}\biggr\}
\frac{dy}{y G_{|\nu|}(y)}\\
&=\varrho_{\nu,1}-x\int_0^\infty \frac{dy}{(y+x)y^2 G_{|\nu|}(y)}\\
&=\varrho_{\nu,1}-x\varrho_{\nu,2}+o(x)
\end{split}
\label{4.14}
\end{equation}
by the dominated convergence theorem. Then we get by \eqref{2.5} that
\[
\frac{K_{\nu+1}(x)}{K_\nu(x)}=\frac{2\nu^+}x
+1+\varrho_{\nu,1}\cos(\pi\nu)
-x\varrho_{\nu,2}\cos(\pi\nu)+o(x),
\]
and hence, combining with Lemma \ref{Lemma 4.3}, we get \eqref{4.11}.

If $|\nu|>3/2$ and $\nu-1/2$ is not an integer,
we deduce
\[
\begin{split}
\sum_{j=1}^{N(\nu)}\frac1{z_{\nu,j}-x}
&=\sum_{j=1}^{N(\nu)}\biggl\{\frac1{z_{\nu,j}}
+\frac x{z_{\nu,j}(z_{\nu,j}-x)}\biggr\}\\
&=\zeta_{\nu,1}+x\sum_{j=1}^{N(\nu)}\frac1{z_{\nu,j}(z_{\nu,j}-x)}\\
&=\zeta_{\nu,1}+x\zeta_{\nu,2}+o(x).
\end{split}
\]
Since $1/y^3 G_{|\nu|}(y)$ is integrable on $(0,\infty)$
by \eqref{4.13} and \eqref{4.14} is valid for this case.
Therefore it follows from \eqref{2.6} that
\[
\frac{K_{\nu+1}(x)}{K_\nu(x)}=\frac{2\nu^+}x
+1+\zeta_{\nu,1}+\varrho_{\nu,1}\cos(\pi\nu)
+x\{\zeta_{\nu,2}-\varrho_{\nu,2}\cos(\pi\nu)\}+o(x).
\]
We immediately obtain \eqref{4.12} by Lemma \ref{Lemma 4.3}.
\end{proof}

\medskip

\begin{remark}\label{Remark 4.6}
In the case when $|\nu|>3/2$ and $\nu-1/2\notin\mathbb Z$,
we can derive
$$
\zeta_{\nu,3}+\varrho_{\nu,3}\cos(\pi\nu)=0.
$$
It is not necessary for the proof of Theorem \ref{Theorem 4.2}.
\end{remark}

\medskip

In virtue of Theorem \ref{Theorem 2.1},
we have that, if $|\nu|<3/2$ and $\nu-1/2\notin \mathbb Z$,
\begin{equation}
\varSigma_\nu(\lambda)=\frac{2\nu^+}{\lambda^2}
+\frac1{\sqrt{\lambda^3}}+\cos(\pi\nu)
\int_0^\infty \frac{dy}{\sqrt{\lambda^3}(\sqrt\lambda+y)yG_{|\nu|}(y)}
\label{4.15}
\end{equation}
and that, if $|\nu|>3/2$ and $\nu-1/2\notin \mathbb Z$,
\begin{equation}
\begin{split}
\varSigma_\nu(\lambda)=\frac{2\nu^+}{\lambda^2}
&+\frac1{\sqrt{\lambda^3}}
+\sum_{j=1}^{N(\nu)}\frac1{\sqrt{\lambda^3}(z_{\nu,j}-\sqrt\lambda)}\\
&+\cos(\pi\nu)\int_0^\infty
\frac{dy}{\sqrt{\lambda^3}(\sqrt\lambda+y)yG_{|\nu|}(y)}.
\end{split}
\label{4.16}
\end{equation}

\medskip

\begin{lemma}\label{Lemma 4.7}
For $t>0$ let
\[
q_\nu(t)=\int_0^\infty \!\!\! \int_0^\infty
\frac{xy-1+e^{-xy}}{y^3 G_{|\nu|}(y)}p(t,x)dxdy,
\]
where
\[
p(t,x)=\frac1{\sqrt{\pi t}}e^{-\frac{x^2}{4t}}.
\]
Then, we have, for $\lambda>0$
\begin{equation}
\int_0^\infty e^{-\lambda t}q_\nu(t)dt=\int_0^\infty
\frac{dy}{\sqrt{\lambda^3}(\sqrt\lambda+y)yG_{|\nu|}(y)}.
\label{4.17}
\end{equation}
\end{lemma}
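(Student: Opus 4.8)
The plan is to compute the Laplace transform of $q_\nu$ by interchanging the orders of the $t$-, $x$- and $y$-integrations and then evaluating the resulting one-dimensional integrals explicitly, so that everything reduces to a short algebraic identity.

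First I would justify the interchange by Tonelli's theorem. The weight $1/(y^3 G_{|\nu|}(y))$ is positive for $y>0$, since $G_{|\nu|}$ is positive on $(0,\infty)$ in the range of $\nu$ under consideration (as already used in the proof of Proposition \ref{Proposition 3.2}), and $p(t,x)\geqq0$; hence it suffices to check that $xy-1+e^{-xy}\geqq0$. Writing $u=xy\geqq0$ and $f(u)=u-1+e^{-u}$, one has $f(0)=0$ and $f'(u)=1-e^{-u}\geqq0$, so $f\geqq0$ on $[0,\infty)$. The whole integrand is therefore non-negative, and Tonelli lets me integrate in any order; in particular I may integrate $e^{-\lambda t}p(t,x)$ over $t$ first.

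Next I would evaluate the $t$-integral through the standard Gaussian identity $\int_0^\infty e^{-\lambda t}p(t,x)\,dt=\lambda^{-1/2}e^{-x\sqrt\lambda}$, which is the $t^{-1/2}$ analogue of \eqref{3.13} and follows from the same computation as in the proof of Lemma \ref{Lemma 3.3}. Substituting this and carrying out the elementary $x$-integral gives
\[
\int_0^\infty (xy-1+e^{-xy})e^{-x\sqrt\lambda}\,dx=\frac{y}{\lambda}-\frac1{\sqrt\lambda}+\frac1{\sqrt\lambda+y}=\frac{y^2}{\lambda(\sqrt\lambda+y)},
\]
where the three terms are the Laplace transforms of $xy$, $-1$ and $e^{-xy}$, and the last equality is the algebraic simplification over the common denominator $\lambda(\sqrt\lambda+y)$, the numerator collapsing to $y^2$.

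Finally I would restore the prefactor $\lambda^{-1/2}$ coming from the $t$-integral and perform the remaining $y$-integration. Multiplying $y^2/(\lambda(\sqrt\lambda+y))$ by $\lambda^{-1/2}$ and by the weight $1/(y^3 G_{|\nu|}(y))$ cancels $y^2$ against $y^3$ and leaves exactly $1/(\sqrt{\lambda^3}(\sqrt\lambda+y)y\,G_{|\nu|}(y))$, which is the integrand on the right-hand side of \eqref{4.17}. The only genuine obstacle is the Tonelli justification; once the non-negativity of $xy-1+e^{-xy}$ is observed, the remaining steps are routine, and the finiteness of the final $y$-integral is guaranteed by the asymptotics \eqref{4.13} of $G_{|\nu|}$ (exponential decay as $y\to\infty$ and a bound of order $y^{2|\nu|-1}$ near $0$).
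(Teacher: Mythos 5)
Your proof is correct and takes essentially the same route as the paper: both rest on the identity $\int_0^\infty e^{-\lambda t}p(t,x)\,dt=\lambda^{-1/2}e^{-x\sqrt{\lambda}}$ (the paper's \eqref{4.18}), an interchange of the order of integration (the paper invokes Fubini where you invoke Tonelli after checking $xy-1+e^{-xy}\geqq0$), and the elementary $x$-integral. Your write-up simply makes explicit the computation the paper summarizes as ``carrying out the elementary integral in $x$,'' namely $\frac{y}{\lambda}-\frac1{\sqrt\lambda}+\frac1{\sqrt\lambda+y}=\frac{y^2}{\lambda(\sqrt\lambda+y)}$.
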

\begin{proof}
We first recall the elementary formula
\begin{equation}
\int_0^\infty e^{-\lambda t}p(t,x)dt
=\frac1{\sqrt\lambda}e^{-\sqrt\lambda x},\quad \lambda>0.
\label{4.18}
\end{equation}
Then, with the help of the Fubini theorem, we deduce
\[
\int_0^\infty e^{-\lambda t} q_\nu(t)dt
=\int_0^\infty dy\int_0^\infty \frac1{\sqrt\lambda}e^{-\sqrt\lambda x}
\frac{xy-1+e^{-xy}}{y^3 G_{|\nu|}(y)}dx.
\]
Carrying out the elementary integral in $x$, we obtain \eqref{4.17}.
\end{proof}

\medskip

We now complete our proof of Theorem \ref{Theorem 4.2}.
If $|\nu|<3/2$ and $\nu-1/2\notin\mathbb Z$,
\begin{equation}
T_\nu(t)=2\nu^+ t+2\sqrt{\frac t\pi}+\cos(\pi\nu)q_\nu(t).
\label{4.19}
\end{equation}
When $|\nu|<1/2$, \eqref{4.19} immediately implies \eqref{4.3}.

When $|\nu|>1/2$ and $\nu-1/2\notin\mathbb Z$,
we have by \eqref{4.13} that $1/y^2 G_{|\nu|}(y)$ is integrable
on $(0,\infty)$. Since
\[
\biggl| \frac{1-e^{-xy}}{y^3 G_{|\nu|}(y)}p(t,x) \biggr|
\leqq\frac{x p(t,x)}{y^2 G_{|\nu|}(y)},
\]
we have
\begin{equation}
q_\nu(t)
=2\sqrt{\frac t\pi}\varrho_{\nu,1}
-\int_0^\infty \!\!\! \int_0^\infty
\frac{1-e^{-xy}}{y^3 G_{|\nu|}(y)}p(t,x)dxdy.
\label{4.20}
\end{equation}
Combining this formula with \eqref{4.10} and \eqref{4.19},
we conclude \eqref{4.4}.

When $|\nu|>1$ and $\nu-1/2\notin\mathbb Z$,
we can further improve \eqref{4.20}.
Indeed, since $1/y^3 G_{|\nu|}(y)$ is integrable on $(0,\infty)$,
we get
\[
\int_0^\infty \int_0^\infty \frac{e^{-xy}}{y^3 G_{|\nu|}(y)}p(t,x)dxdy
\]
converges and we get
\begin{equation}
q_\nu(t)=2\sqrt{\frac t\pi}\varrho_{\nu,1}-\varrho_{\nu,2}
+\int_0^\infty \!\!\! \int_0^\infty
\frac{e^{-xy}}{y^3 G_{|\nu|}(y)}p(t,x)dxdy.
\label{4.21}
\end{equation}
Hence, we get \eqref{4.5} by \eqref{4.19},
\eqref{4.21} and Lemma \ref{Lemma 4.3}.

To invert $\varSigma_\nu$ in the case of $|\nu|>3/2$,
we note
\[
\frac1{a\pm b}=\frac1a\mp\frac b{a^2}+\frac{b^2}{a^2(a\pm b)}.
\]
Then it follows that
\[
\sum_{j=1}^{N(\nu)}\frac1{z_{\nu,j}-\sqrt\lambda}
=\zeta_{\nu,1}+\sqrt\lambda \zeta_{\nu,2}
+\sum_{j=1}^{N(\nu)}\frac{\lambda}{z_{\nu,j}^2(z_{\nu,j}-\sqrt\lambda)}.
\]
Hence \eqref{4.16} is equivalent to
\[
\begin{split}
\varSigma_\nu(\lambda)=\frac{2\nu^+}{\lambda^2}
&+\frac{1+\zeta_{\nu,1}}{\sqrt{\lambda^3}}
+\frac{\zeta_{\nu,2}}{\lambda}
-\sum_{j=1}^{N(\nu)}
\frac1{z_{\nu,j}^2\sqrt\lambda(\sqrt\lambda-z_{\nu,j})}\\
&+\cos(\pi\nu)\int_0^\infty
\frac{dy}{\sqrt{\lambda^3}(\sqrt\lambda+y)yG_{|\nu|}(y)}.
\end{split}
\]
With the help of \eqref{4.18}, we easily see that, for $z\in\mathbb C^-$
\[
\int_0^\infty e^{-\lambda t} dt \int_0^\infty e^{zx}p(t,x)dx
=\frac1{\sqrt\lambda(\sqrt\lambda-z)}.
\]
Hence, we deduce \eqref{4.6} from \eqref{4.21} and Lemma \ref{Lemma 4.3}.


\section{Large time asymptotics of the Wiener sausage}

\noindent
This section is devoted to show an asymptotic behavior
of $L(t)$ for large $t$ in even dimensional cases.
Le~Gall~\cite{14} considered the Wiener sausage
associated with a general compact set and proved
\begin{equation}
L(t)=
\begin{cases}
c_1^{(4)}t+c_2^{(4)}\log t+c_3^{(4)}
+c_4^{(4)}\dfrac{\log t}t+o\biggl(\dfrac{\log t}t\biggr)
&\quad\text{if $d=4$,}\\
c_1^{(d)}t+c_2^{(d)}+c_3^{(d)}t^{2-d/2}
+O(t^{1-d/2})&\quad\text{if $d\geqq5$}
\end{cases}
\label{5.1}
\end{equation}
and gave the explicit expression of each constant $c_j^{(d)}$.
In the two dimensional case, Le~Gall~\cite{15} also showed
that $L(t)$ admits the asymptotic expansion in powers of $1/\log t$.

When $d$ is odd, Hamana~\cite{6} showed that the asymptotic expansion
for $L(t)$ with the help of \eqref{4.2}.
The purpose in this section is to improve the asymptotic behavior
of $L(t)$ if $d$ is even and not less than 6.
Throughout this section, we use $C_i$'s for 
positive constants independent of the variable.

\medskip

\begin{thm}\label{Theorem 5.1}
If $d$ is even and not less than 6,
there is a family of constants $\{\alpha_n^{(d)}\}_{n=0}^{d-5}$
such that
\[
\begin{split}
L(t)=S_{d-1}r^{d-2}\biggl[ &\frac{(d-2)t}2 +\frac{r^2}{d-4}
-\frac{\alpha_d \, r^{d-2}}{2^{d/2-2}(d-4)\varGamma(d/2-1)}\frac1{t^{d/2-2}}\\
&+\frac1{t^{d/2-1}}\sum_{n=0}^{d-5}\frac{\alpha_n^{(d)}}{t^{n/2}}
+\frac{\varGamma((d-3)/2)r^{2d-4}}
{\sqrt\pi(d-2)\varGamma(d/2-1)^3}\frac{\log t}{t^{d-3}}
+O\biggl(\frac1{t^{d-3}}\biggr)\biggr],
\end{split}
\]
where
\[
\alpha_d=\int_0^\infty \frac1{y^d}
\biggl\{\frac1{G^{(d)}(y)}-\frac{y^{d-2}}{2^{d-4}\varGamma(d/2-1)^2}\biggr\}dy.
\]
\end{thm}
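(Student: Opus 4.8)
The plan is to start from the explicit formula of Theorem~\ref{Theorem 4.1}(3) and to determine the large-$t$ behaviour of its two $t$-dependent pieces separately: the finite sum over the zeros $z_j^{(d)}$ and the double integral against $1/(y^3G^{(d)}(y))$. Throughout I write $a=r^2/(2t)$, so that both pieces depend on $t$ only through $a\downarrow0$ together with the common prefactor $t^{-1/2}$. The sum over zeros is the easy part: since $\RE z_j^{(d)}<0$, each $\int_0^\infty e^{-ax^2+z_j^{(d)}x}\,dx$ has, by Watson's lemma, a complete asymptotic expansion $\sum_{k\geqq0}(-a)^k(2k)!/\{k!\,(-z_j^{(d)})^{2k+1}\}$ with geometric control of the tail; summing over $j$ yields a clean series in the half-integer powers $t^{-1/2-k}$ whose coefficients are the power sums $\zeta_{\nu,2k+3}$, where $\nu=d/2-1$.

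The heart of the matter is the double integral $J(a)=\int_0^\infty\!\int_0^\infty e^{-xy}e^{-ax^2}/\{y^3G^{(d)}(y)\}\,dx\,dy$. Performing the $x$-integral first and rescaling $y=\sqrt a\,v$ shows that the non-analytic part of the $a\downarrow0$ expansion is generated in the transition region $y\asymp\sqrt a$, where I would insert the small-$y$ expansion of $1/G^{(d)}(y)$. Because the order $\nu=d/2-1$ is an integer, this expansion has the shape $\kappa_\nu y^{d-2}\{1+\sum_{j\geqq1}c_jy^{2j}\}$ together with a logarithmic term of order $y^{2d-4}\log y$, the latter inherited from the $\log(y/2)$ appearing in the small-$y$ expansion of $K_\nu$ (cf. \eqref{2.19}). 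Subtracting the leading homogeneous piece $\kappa_\nu y^{d-2}=y^{d-2}/\{2^{d-4}\varGamma(d/2-1)^2\}$ makes $\int_0^\infty y^{-d}\{1/G^{(d)}(y)-\kappa_\nu y^{d-2}\}\,dy$ convergent, and this finite part is exactly $\alpha_d$; integrating the subtracted homogeneous term against the Gaussian, by means of $\int_0^\infty x^{-s-1}e^{-ax^2}dx=\tfrac12a^{s/2}\varGamma(-s/2)$, produces the leading anomalous power $t^{-(d/2-2)}$ carrying $\alpha_d$. Iterating the subtraction with the terms $c_jy^{d-2+2j}$ generates the integer-power part of $\sum_{n=0}^{d-5}\alpha_n^{(d)}t^{-n/2}$, while the logarithmic term, handled in the same way, produces precisely the $t^{-(d-3)}\log t$ contribution, its constant being fixed by the coefficient of $y^{2d-4}\log y$ in $1/G^{(d)}$ (whence the factor $\varGamma(d/2-1)^{-3}$).

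It then remains to add the two expansions and to see the advertised cancellations. The double integral also contributes half-integer powers $t^{-1/2-k}$, with coefficients the regularised moments $\varrho_{\nu,2k+3}$, which must be combined with the $\zeta_{\nu,2k+3}$ coming from the sum over zeros. Here I would invoke the relations of Lemma~\ref{Lemma 4.5} and Remark~\ref{Remark 4.6}—equivalently the fact that $K_{\nu+1}(w)/K_\nu(w)-2\nu^+/w$ has vanishing Taylor coefficients in every even power of $w$ at $w=0$, which one proves by pushing the small-$w$ expansion of Lemma~\ref{Lemma 4.4} to the needed order—so that $\zeta_{\nu,2k+3}+\cos(\pi\nu)\varrho_{\nu,2k+3}=0$ for $k$ in the convergent range. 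These identities annihilate the lowest half-integer powers; the surviving half-integer terms fill in the odd-indexed coefficients $\alpha_n^{(d)}$, and everything past order $t^{-(d-3)}$ is swept into the remainder.

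The main obstacle is the rigorous asymptotic analysis of $J(a)$ near $y=0$: one must control the expansion uniformly through the transition region $y\asymp\sqrt a$, isolate the logarithmic term (which exists only because $d/2-1\in\bbZ$), and above all establish the uniform remainder bound $O(t^{-(d-3)})$. I would make this airtight through the Mellin representation $J(a)=\frac1{2\pi i}\int_{(c)}\tilde g(s)\,\tfrac12\varGamma(1-s)\varGamma(s/2)\,a^{-s/2}\,ds$, where $\tilde g(s)=\int_0^\infty y^{s-1}/\{y^3G^{(d)}(y)\}\,dy$ is holomorphic for $\RE s>5-d$ and continues meromorphically with simple poles at $s=5-d-2j$ (from the power terms) and a double pole at $s=7-2d$ (from the $\log y$ term). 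Moving the contour to the left past these poles and past the poles of $\varGamma(s/2)$ converts them, term by term, into the power, half-integer and logarithmic contributions described above, while the shifted contour integral is $O(a^{(2d-7)/2})$, i.e. $O(t^{-(d-3)})$ after the prefactor; the exponential decay of $1/G^{(d)}(y)$ at infinity guarantees the needed holomorphy and decay of $\tilde g$ along vertical lines. Checking that the residues reproduce the exact constants in the statement, and that the half-integer residues of $\tilde g$ pair with the $\zeta_{\nu,2k+3}$ as dictated by Lemma~\ref{Lemma 4.5}, is the remaining bookkeeping.
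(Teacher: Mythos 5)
Your treatment of $L_1$ by Watson's lemma matches the paper, and your Mellin--Barnes machinery for the double integral is in itself a sound (and genuinely different) replacement for the paper's iterated subtraction via the functions $Q_k$ and Lemmas \ref{Lemma 5.2}--\ref{Lemma 5.4}: the strip of holomorphy $\RE s>5-d$, the simple poles at $s=5-d-2j$, the double pole at $s=7-2d$ and the exponential decay along vertical lines are all correct. The trouble begins when you assign the constants to the powers of $t$. Carry out your own pole bookkeeping: the simple pole of $\widetilde g$ at $s=5-d$ has residue $\kappa_{d/2-1}=1/\{2^{d-4}\varGamma(d/2-1)^2\}$, an \emph{explicit} constant, and it is this pole that produces the power $t^{-(d/2-2)}$; the regularized moment $\alpha_d=\widetilde g(4-d)$ enters instead through the pole of $\varGamma(s/2)$ at $s=4-d$, i.e.\ at the half-integer power $t^{-(d/2-3/2)}$, where it sits paired with the power sum $\zeta_{\nu,d-1}=\sum_j z_{\nu,j}^{-(d-1)}$ coming from $L_1$. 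So, contrary to your sentence that the subtraction ``produces the leading anomalous power $t^{-(d/2-2)}$ carrying $\alpha_d$'', your own scheme does not place $\alpha_d$ there.

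This is not cosmetic, because $t^{-(d/2-3/2)}$ lies strictly between $t^{-(d/2-2)}$ and $t^{-(d/2-1)}$ and therefore fits into none of the slots of the asserted expansion: the proof must show that its total coefficient, a fixed nonzero combination of $\zeta_{\nu,d-1}$ and $\alpha_d$, vanishes. Your proposed cancellation mechanism cannot reach this. The intrinsic identities $\zeta_{\nu,2k+3}+\cos(\pi\nu)\varrho_{\nu,2k+3}=0$ coming from Lemma \ref{Lemma 4.5}, Remark \ref{Remark 4.6} and the small-$w$ expansion exist only while $\varrho_{\nu,2k+3}=\int_0^\infty y^{-2k-4}G_\nu(y)^{-1}\,dy$ converges, i.e.\ for $2k+3\leqq d-3$; at index $d-1$ this moment diverges, and the Taylor matching in $w$ breaks down at exactly that order because of the $w^{d-3}\log w$ term in $K_{\nu+1}/K_\nu$ --- the same obstruction the paper records after \eqref{6.8}. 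There is no ``next'' identity of this type to push to, so an independent input is required; the paper supplies it by invoking Le~Gall's theorem \eqref{5.1}, whose remainder $O(t^{1-d/2})$ forces the coefficient of $t^{-(d/2-3/2)}$ in the already-derived expansion to vanish (this is precisely why \eqref{5.1} is cited inside the proof and not only as background). Your proposal never appeals to \eqref{5.1} or to any substitute for it, so the expansion you would obtain still contains an unresolved term at $t^{-(d/2-3/2)}$ coupling $\zeta_{\nu,d-1}$ with $\alpha_d$, and an explicit constant rather than the claimed $\alpha_d$-expression at $t^{-(d/2-2)}$; the remaining ``bookkeeping'' you defer is exactly the step your method cannot perform.
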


\medskip

Recalling the notation $G^{(d)}=G_{d/2-1}$, we set
\[
\begin{split}
&L_1(t)=\frac{\sqrt 2 r^3}{\sqrt{\pi t}}\sum_{j=1}^{N_d}
\frac1{(z_j^{(d)})^2}\int_0^\infty e^{-\frac{r^2x^2}{2t}+z_j^{(d)}x}dx\\
&L_2(t)=\frac{\sqrt 2 r^3}{\sqrt{\pi t}}
\int_0^\infty \!\!\! \int_0^\infty \frac{e^{-xy}}{y^3 G^{(d)}(y)}
e^{-\frac{r^2x^2}{2t}}dxdy.
\end{split}
\]
Then we have from Theorem \ref{Theorem 4.1} that,
if $d$ is even and not less than 6,
\begin{equation}
L(t)=S_{d-1}r^{d-2}\biggl[\frac{(d-2)t}2+\frac{r^2}{d-4}
-L_1(t)+(-1)^{d/2-1}L_2(t)\biggr].
\label{5.2}
\end{equation}
The calculation of $L_1(t)$ is easy since $\RE(z_j^{(d)})<0$
for each $j=1,2,\dots,N_d$. For $x\geqq0$ and
an integer $n\geqq0$ we put
\[
R_n(x)=e^{-x}-\sum_{k=0}^n \frac{(-1)^k}{k!}x^k.
\]
and let $M$ be a positive integer. Then it follows that
\[
\begin{split}
\int_0^\infty e^{-\frac{r^2x^2}{2t}+z_j^{(d)}x}dx
&=\sum_{n=0}^M \frac{(-1)^nr^{2n}}{n!\,(2t)^n}
\int_0^\infty x^{2n}e^{z_j^{(d)}x}dx
+\int_0^\infty R_M\biggl(\frac{r^2x^2}{2t}\biggr)e^{z_j^{(d)}x}dx\\
&=\sum_{n=0}^M \frac{(-1)^nr^{2n}(2n)!}{2^n n!\,(z_j^{(d)})^{2n+1}}
\frac1{t^n}+O\biggl(\frac1{t^{M+1}}\biggr).
\end{split}
\]
Hence we obtain
\begin{equation}
L_1(t)=\sqrt{\frac 2 \pi} \sum_{n=0}^M
\frac{(-1)^nr^{2n+3}(2n)!\,\zeta_{2n+3}^{(d)}}{2^n n!}\frac1{t^{n+1/2}}
+O\biggl(\frac1{t^{M+3/2}}\biggr)
\label{5.3}
\end{equation}
as $t\to\infty$, where the notation $\zeta_k^{(d)}$
is used to denote $\zeta_{d/2-1,k}$ for an integer $k\geqq1$.

For a proof of Theorem \ref{Theorem 5.1}
we need to give an asymptotic behavior
of $L_2(t)$ for large $t$. Let $m=d/2-1$ for simplicity and
set $L_2^0(t)=L_2(2r^2 t)/r^2$. We have
\begin{equation}
L_2^0(t)=\frac1{\sqrt{\pi t}}\int_0^\infty \int_0^\infty
\frac{e^{-xy}}{y^3 G_m(y)}e^{-\frac{x^2}{4t}}dxdy.
\label{5.4}
\end{equation}
We note that $G_m(x)=K_m(x)^2+\pi^2 I_m(x)^2$.

\medskip

\begin{lemma}\label{Lemma 5.2}
For an integer $n\geqq1$ we have
\begin{equation}
\int_0^\infty \frac{R_{n-1}(x^2)}{x^{2n}}dx
=\frac{(-1)^n\pi}{2\varGamma(n+1/2)}.
\label{5.5}
\end{equation}
\end{lemma}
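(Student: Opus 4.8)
The plan is to establish a recursion in $n$ by integration by parts and then close the induction with a direct computation of the base case. First I would record the two facts that make the integral well defined. Since $R_{n-1}(u)=\sum_{k=n}^\infty (-1)^k u^k/k!$, we have $R_{n-1}(x^2)=O(x^{2n})$ as $x\downarrow0$, so the integrand is bounded near the origin; and for large $x$ the exponential term in $R_{n-1}(x^2)=e^{-x^2}-\sum_{k=0}^{n-1}(-1)^kx^{2k}/k!$ is negligible, so $R_{n-1}(x^2)$ behaves like the polynomial $-\sum_{k=0}^{n-1}(-1)^kx^{2k}/k!$ of degree $2n-2$, whence $R_{n-1}(x^2)/x^{2n}=O(x^{-2})$ and the integral converges at infinity. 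The crucial structural identity is that differentiating the definition gives $R_n'(u)=-R_{n-1}(u)$, and hence $\frac{d}{dx}R_{n-1}(x^2)=-2xR_{n-2}(x^2)$.

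Writing $I_n$ for the left-hand side of \eqref{5.5}, I would integrate by parts with $u=R_{n-1}(x^2)$ and $dv=x^{-2n}\,dx$, so that $v=-x^{1-2n}/(2n-1)$. The boundary term $[-x^{1-2n}R_{n-1}(x^2)/(2n-1)]_0^\infty$ vanishes: at $x=0$ it is $O(x)$ by the $O(x^{2n})$ bound, and at $x=\infty$ it is $O(x^{-1})$ by the polynomial asymptotics above. Substituting $\frac{d}{dx}R_{n-1}(x^2)=-2xR_{n-2}(x^2)$ in the remaining integral collapses it to $I_{n-1}$, yielding the recursion
\[
I_n=-\frac2{2n-1}\,I_{n-1},\qquad n\geqq2.
\]

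For the base case $n=1$ I would compute $I_1=\int_0^\infty (e^{-x^2}-1)x^{-2}\,dx$ by one more integration by parts, which reduces it to $-2\int_0^\infty e^{-x^2}\,dx=-\sqrt\pi$; this agrees with the claimed value $(-1)^1\pi/(2\varGamma(3/2))=-\sqrt\pi$. Finally, to close the induction I would verify that the proposed closed form satisfies the same recursion: since $\varGamma(n+1/2)=(n-1/2)\varGamma(n-1/2)$, the ratio of the claimed values at $n$ and $n-1$ equals $-\varGamma(n-1/2)/\varGamma(n+1/2)=-2/(2n-1)$, matching the recursion exactly. The only point demanding care is the vanishing of the boundary terms in the integration by parts, which is where the precise order of contact of $R_{n-1}(x^2)$ at $0$ and its polynomial growth at infinity are both genuinely needed; everything else is routine.
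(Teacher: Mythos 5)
Your proof is correct, and it is genuinely different from the one in the paper. The paper's argument is a two-line reduction: the change of variables $y=x^2$ turns the left-hand side of \eqref{5.5} into $\frac12\int_0^\infty y^{-n-1/2}\bigl\{e^{-y}-\sum_{k=0}^{n-1}(-1)^k y^k/k!\bigr\}\,dy$, which is identified with $\frac12\varGamma\bigl(\frac12-n\bigr)$ by the tabulated Cauchy--Saalsch\"utz representation of the Gamma function (cited from Gradshteyn--Ryzhik), and then the reflection formula $\varGamma(z)\varGamma(1-z)=\pi/\sin(\pi z)$ converts this into $(-1)^n\pi/2\varGamma(n+1/2)$. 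You instead derive the recursion $I_n=-\tfrac{2}{2n-1}I_{n-1}$ by integration by parts, using the identity $R_n'=-R_{n-1}$ together with the precise order of vanishing of the boundary terms at $0$ and $\infty$ (both of which you justify correctly), compute the base case $I_1=-\sqrt\pi$ from the Gaussian integral, and close the induction by checking that the claimed closed form obeys the same recursion via $\varGamma(n+1/2)=(n-1/2)\varGamma(n-1/2)$. What the paper's route buys is brevity, at the cost of citing two external facts; what yours buys is self-containedness: it needs nothing beyond the Gaussian integral and the functional equation of $\varGamma$, and in effect it reproves the cited table formula (the analytic continuation of $\varGamma$ at the argument $p=\frac12-n$) in exactly the special case required, so neither the table nor the reflection formula is ever invoked.
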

\begin{proof}
From a change of variables from $x$ to $y$ given by $y=x^2$
we deduce that the left hand side of \eqref{5.5} is equal to
\[
\frac12\int_0^\infty \frac1{y^{n+1/2}}\biggl\{ e^{-y}
-\sum_{k=0}^{n-1}\frac{(-1)^k}{k!}y^k\biggr\}dy
=\frac12\varGamma\biggl(\frac12-n\biggr)
\]
(cf. \cite[p.361]{3}). By the formula
$\varGamma(z)\varGamma(1-z)=\pi/\sin(\pi z)$,
$z\in\mathbb C\setminus\mathbb Z$
(cf. \cite[p.3]{13}), we have
\[
\varGamma\biggl(\frac12-n\biggr)=\frac{(-1)^n\pi}{\varGamma(n+1/2)},
\]
which yields \eqref{5.5}.
\end{proof}

\medskip

We give several constants which we need to describe
the asymptotic behavior of $1/G_m(x)$ as $x\downarrow0$.
For integers $h,k$ with $1\leqq h\leq k\leqq m-1$ we put
\[
b_{k,h}= \sum_{\scriptstyle k_1+k_2+\cdots+k_h=k
\atop\scriptstyle k_1,k_2,\dots,k_h\geqq1}
b_{k_1}b_{k_2}\cdots b_{k_h},
\]
where
\[
b_k=\frac{(-1)^{k+1}}{4^k\varGamma(m)^2} \sum_{h=0}^k
\frac{\varGamma(m-h)\varGamma(m-k+h)}{\varGamma(h+1)\varGamma(k-h+1)}.
\]
We set
\[
a_k=\sum_{h=1}^k b_{k,h}\,\, (k=1,2,\dots,m-1),
\quad
a_m=\frac{(-1)^{m+1}}{4^{m-1}m\varGamma(m)^2}.
\]
We note that $a_1=b_1$.
Moreover recall that $\kappa_m=1/4^{m-1}\varGamma(m)^2$.
See \eqref{2.25} for the definition of $\kappa_\nu$.
The second lemma gives the asymptotic behavior of $1/G_m(x)$.

\medskip

\begin{lemma}\label{Lemma 5.3}
We have that, as $x\downarrow0$,
\begin{equation}
\frac1{G_m(x)}=\kappa_m x^{2m}
\biggl\{1+\sum_{k=1}^{m-1}a_k x^{2k}+a_m x^{2m}\log\frac1x
+O(x^{2m})\biggr\}.
\label{5.6}
\end{equation}
\end{lemma}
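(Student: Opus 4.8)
The plan is to substitute the classical small-argument expansion of $K_m$ for integer order $m$ (cf. \cite[p.80]{19}, \cite{13}) into $G_m=K_m^2+\pi^2I_m^2$ and then read off the expansion of the reciprocal. Writing the expansion so as to separate its three structurally distinct pieces, I would set
\[
K_m(x)=x^{-m}A(x^2)+x^m\log x\,B(x^2)+x^mC(x^2),\qquad
\pi^2I_m(x)^2=x^{2m}D(x^2),
\]
where $A,B,C,D$ are power series in $x^2$ convergent near $0$. Here $x^{-m}A(x^2)=\frac12(x/2)^{-m}\sum_{k=0}^{m-1}\frac{(m-k-1)!}{k!}(-x^2/4)^k$ is the Laurent part, $B(x^2)$ carries the logarithmic part $(-1)^{m+1}\log(x/2)I_m(x)$, and $C(x^2)$ is the remaining analytic part together with the $-\log 2$ correction. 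A direct evaluation gives the two constants I need, $A(0)=2^{m-1}\varGamma(m)$ and $B(0)=(-1)^{m+1}2^{-m}/\varGamma(m+1)$, and I note $A(0)^{-2}=\kappa_m$.

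Next I would factor the dominant term $x^{-2m}A(x^2)^2$ out of $G_m$. Squaring $K_m$ and using $\pi^2I_m^2=x^{2m}D$ yields
\[
\frac{G_m(x)}{x^{-2m}A(x^2)^2}=1+2x^{2m}\frac{B(x^2)}{A(x^2)}\log x+2x^{2m}\frac{C(x^2)}{A(x^2)}+O\bigl(x^{4m}(\log x)^2\bigr),
\]
since every contribution from the logarithmic part, the analytic part, and $\pi^2I_m^2$ is of relative order at least $x^{2m}$. Inverting this and multiplying by $x^{2m}/A(x^2)^2=\kappa_m x^{2m}\,A(0)^2/A(x^2)^2$, I obtain
\[
\frac1{G_m(x)}=\kappa_m x^{2m}\frac{A(0)^2}{A(x^2)^2}\Bigl[1-2x^{2m}\frac{B(x^2)}{A(x^2)}\log x-2x^{2m}\frac{C(x^2)}{A(x^2)}+O\bigl(x^{4m}(\log x)^2\bigr)\Bigr].
\]
The coefficients $a_k$ come entirely from the analytic factor $A(0)^2/A(x^2)^2$. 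Writing $A(x^2)/A(0)=\sum_{k\geqq0}u_kx^{2k}$ with $u_k=\frac{(-1)^k}{4^k}\frac{\varGamma(m-k)}{\varGamma(m)\varGamma(k+1)}$, the Cauchy product shows that the coefficient of $x^{2k}$ in $(A(x^2)/A(0))^2$ equals $-b_k$ for $k\geqq1$, so $A(x^2)^2/A(0)^2=1-\sum_{k\geqq1}b_kx^{2k}$; expanding the reciprocal as a geometric series and grouping by the compositions $k_1+\cdots+k_h=k$ gives $A(0)^2/A(x^2)^2=1+\sum_{k\geqq1}a_kx^{2k}$ with $a_k=\sum_{h=1}^kb_{k,h}$.

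It remains to collect terms up to the stated order. Keeping the analytic factor only through $x^{2(m-1)}$ (its $x^{2m}$ and higher terms are $O(x^{2m})$) and evaluating the leading coefficients of the bracket via $A(0),B(0),C(0)$, every contribution other than the logarithmic one is of order $x^{2m}$ or smaller, while the logarithmic one is $-2\bigl(B(0)/A(0)\bigr)x^{2m}\log x=a_mx^{2m}\log\frac1x$. A short computation gives $2B(0)/A(0)=(-1)^{m+1}/(4^{m-1}m\varGamma(m)^2)$, exactly the prescribed $a_m$. Assembling these pieces produces \eqref{5.6}. The main obstacle is the bookkeeping: confirming the combinatorial identity $a_k=\sum_h b_{k,h}$ from the geometric-series inversion of $(A/A(0))^2$, and verifying that the non-logarithmic $x^{2m}$ term, the $O(x^2)$ corrections to the logarithmic term, the $(\log x)^2$ terms, and the $\pi^2I_m^2$ contribution all fall into the single remainder $O(x^{2m})$, which sits below the dominant $x^{2m}\log\frac1x$.
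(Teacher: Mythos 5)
Your proof is correct and is essentially the paper's own argument: both rest on the classical logarithmic small-$x$ expansion of $K_m$, the observation that $\pi^2 I_m(x)^2$ enters only at relative order $x^{2m}$, a geometric-series inversion whose composition bookkeeping yields $a_k=\sum_{h=1}^k b_{k,h}$, and the identification of $a_m$ from the cross term between the Laurent part and the logarithmic part of $K_m$. The only difference is organizational --- you factor out $x^{-2m}A(x^2)^2$ and invert the two factors separately, whereas the paper normalizes $G_m^0(x)=\kappa_m x^{2m}G_m(x)$ and inverts the whole expansion at once --- which does not change the substance of the computation.
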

\begin{proof} By the series expression of
the modified Bessel function $I_m$, we have that 
$I_m(x)^2$ is of order $x^{2m}$ as $x\downarrow0$. It is known that
\[
\begin{split}
K_m(x)=&\frac12\sum_{k=0}^{m-1}\frac{(-1)^k(m-k-1)!}{k!}
\biggl(\frac x2\biggr)^{2k-m}\\
&+(-1)^{m+1} \log\frac x2 \sum_{k=0}^\infty \frac1{k!\,(k+m)!}
\biggl(\frac x2\biggr)^{2k+m}\\
&-\frac{(-1)^{m+1}}2\sum_{k=0}^\infty \frac1{k!\,(k+m)!}
\biggl(\frac x2\biggr)^{2k+m}\{\psi(k+1)+\psi(k+m+1)\},
\end{split}
\]
where $\psi$ is the logarithmic derivative of the gamma function
(cf. \cite[p.80]{19}). This formula immediately yields
\[
\begin{split}
K_m(x)=&\frac{\varGamma(m)}2\biggl(\frac2x\biggr)^m
\sum_{k=0}^{m-1}\frac{(-1)^k \varGamma(m-k)}{\varGamma(k+1)\varGamma(m)}
\biggl(\frac x2\biggr)^{2k}\\
&+\frac{(-1)^m}{\varGamma(m+1)}\biggl(\frac x2\biggr)^m\log\frac1x
+O(x^m).
\end{split}
\]
as $x\downarrow0$. Hence we have
\[
\begin{split}
K_m(x)^2=
&\frac{\varGamma(m)^2}4\biggl(\frac2x\biggr)^{2m}
\biggl\{\sum_{k=0}^{m-1}
\frac{(-1)^k \varGamma(m-k)}{\varGamma(k+1)\varGamma(m)}
\biggl(\frac x2\biggr)^{2k}\biggr\}^2\\
&+\frac{(-1)^m}m\log\frac1x \sum_{k=0}^{m-1}
\frac{(-1)^k \varGamma(m-k)}{\varGamma(k+1)\varGamma(m)}
\biggl(\frac x2\biggr)^{2k}+O(1).
\end{split}
\]
Therefore we conclude that, as $x\downarrow0$,
\[
\begin{split}
G_m(x)&=K_m(x)^2+O(x^{2m})\\
&=\frac1{\kappa_m x^{2m}}
\biggl\{1-\sum_{k=1}^{m-1}b_k x^{2k}
-a_m x^{2m}\log\frac1x+O(x^{2m})\biggr\}.
\end{split}
\]

We set $G_m^0(x)=\kappa_m x^{2m}G_m(x)$ for simplicity.
It is sufficient to obtain the asymptotic behavior of $1/G_m^0(x)$.
We can easily derive
\begin{equation}
\frac1{G_m^0(x)}=
1+\sum_{h=1}^{m-1}\biggl(\sum_{k=1}^{m-1}b_k x^{2k}
+a_m x^{2m}\log\frac1x\biggr)^h+O(x^{2m}).
\label{5.7}
\end{equation}
In the case of $m=2$, \eqref{5.7} immediately implies \eqref{5.6}.
We concentrate on considering the case of $m\geqq3$.
In this case, the summation in the right hand side of
\eqref{5.7} is
\begin{equation}
1+\sum_{h=1}^{m-1}\biggl(\sum_{k=1}^{m-1} b_k x^{2k}\biggr)^h
+a_m x^{2m}\log\frac1x+O(x^{2m}).
\label{5.8}
\end{equation}
A simple calculation shows that the double sum
in the right hand side of \eqref{5.8} is equal to
\[
\sum_{h=1}^{m-1}\sum_{k=h}^{m-1} b_{k,h} x^{2k}+O(x^{2m}).
\]
Hence we deduce
\[
\frac1{G_m^0(x)}=1+\sum_{k=1}^{m-1}a_k x^{2k}+a_m x^{2m}\log\frac1x
+O(x^{2m}),
\]
which implies \eqref{5.6} for $m\geqq3$.
\end{proof}

\medskip

We now proceed to a proof of Theorem \ref{Theorem 5.1}.
From \eqref{5.4} it follows that
\[
L_2^0(t)=\frac2{\sqrt\pi}\int_0^\infty \!\!\! \int_0^\infty
\frac1{y^3 G_m(y)}e^{-2\sqrt t uy}e^{-u^2}dudy,
\]
which is the sum of
\[
\begin{split}
&L_2^1(t)=\frac2{\sqrt\pi}\sum_{n=0}^{m-2}\frac{(-1)^n}{n!}
\int_0^\infty \!\!\! \int_0^\infty
\frac1{y^3 G_m(y)}u^{2n}e^{-2\sqrt t uy}dudy,\\
&L_2^2(t)=\frac2{\sqrt\pi} \int_0^\infty \!\!\! \int_0^\infty
\frac1{y^3 G_m(y)}e^{-2\sqrt t uy} R_{m-2}(u^2) dudy.
\end{split}
\]
We easily derive
\[
\begin{split}
L_2^1(t)&=\frac1{\sqrt\pi}\sum_{n=0}^{m-2}
\frac{(-1)^n(2n)!}{4^n n!}\frac1{t^{n+1/2}}
\int_0^\infty \frac{dy}{y^{2n+4}G_m(y)}\\
&=\frac1{\sqrt\pi}\sum_{n=0}^{d/2-3}
\frac{(-1)^n(2n)!\,\varrho_{2n+3}^{(d)}}{4^n n!}\frac1{t^{n+1/2}},
\end{split}
\]
where $\varrho_k^{(d)}=\varrho_{m,k}$.

For $x>0$ and an integer $k=0,1,2,\dots,m-1$ let
\[
Q_k(x)=\frac1{G_m(x)}-\kappa_m x^{2m}\sum_{n=0}^k a_n x^{2n},
\]
where we have put $a_0=1$ for convenience.
We need the following lemma to derive
the large time asymptotics of $L_2^2(t)$.

\medskip

\begin{lemma}\label{Lemma 5.4}
We have that
$L_2^2(t)$ is the sum of the following three integrals;
\begin{align}
&\frac2{\sqrt\pi}\int_0^\infty \!\!\! \int_0^\infty
\frac{e^{-2\sqrt t uy}}{y^3}
\kappa_m y^{2m}\sum_{k=0}^{m-1}a_k y^{2k} R_{m+k-2}(u^2)dudy,
\label{5.9}\\
&\frac2{\sqrt\pi}\int_0^\infty \!\!\! \int_0^\infty
\frac{e^{-2\sqrt t uy}}{y^3}
\sum_{k=0}^{m-1}\frac{(-1)^{m+k-1} Q_k(y)}{(m+k-1)!}u^{2(m+k-1)}dudy,
\label{5.10}\\
&\frac2{\sqrt\pi}\int_0^\infty \!\!\! \int_0^\infty
\frac{e^{-2\sqrt t uy}}{y^3} Q_{m-1}(y) R_{2m-2}(u^2)dudy.
\label{5.11}
\end{align}
\end{lemma}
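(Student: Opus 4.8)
The plan is to establish the three-fold splitting as a purely algebraic rearrangement of the defining integral for $L_2^2(t)$, made rigorous by absolute convergence. Recall that
\[
L_2^2(t)=\frac2{\sqrt\pi}\int_0^\infty\!\!\!\int_0^\infty
\frac{e^{-2\sqrt t uy}}{y^3 G_m(y)}R_{m-2}(u^2)\,du\,dy ,
\]
and that the definition of $Q_{m-1}$ (with the convention $a_0=1$) gives the exact identity $\frac1{G_m(y)}=\kappa_m y^{2m}\sum_{k=0}^{m-1}a_k y^{2k}+Q_{m-1}(y)$. First I would substitute this, splitting $L_2^2$ into a "polynomial part" carrying $\kappa_m y^{2m}\sum_{k=0}^{m-1}a_k y^{2k}$ and a "tail part" carrying $Q_{m-1}(y)$, each still paired with the fixed remainder $R_{m-2}(u^2)$.

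The engine of the argument is the elementary telescoping recurrence $R_p(x)=R_q(x)+\sum_{i=p+1}^{q}\frac{(-1)^i}{i!}x^i$ for $q\ge p$, immediate from the definition $R_n(x)=e^{-x}-\sum_{k=0}^n\frac{(-1)^k}{k!}x^k$. Applying it with $p=m-2$ upgrades $R_{m-2}(u^2)$ to $R_{m+k-2}(u^2)$ in the $k$-th summand of the polynomial part and to $R_{2m-2}(u^2)$ in the tail part; these upgraded remainders are precisely the integrands of \eqref{5.9} and \eqref{5.11}. What remains are the finitely many monomial corrections $\kappa_m y^{2m}\sum_{k=1}^{m-1}a_k y^{2k}\sum_{i=m-1}^{m+k-2}\frac{(-1)^i}{i!}u^{2i}$ from the polynomial part and $Q_{m-1}(y)\sum_{i=m-1}^{2m-2}\frac{(-1)^i}{i!}u^{2i}$ from the tail part.

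The decisive step is to recognise that these corrections reassemble into \eqref{5.10}. I would interchange the summations so as to group by the power $u^{2j}$: collecting the coefficient of $\frac{(-1)^j}{j!}u^{2j}$ for $m-1\le j\le 2m-2$ produces $Q_{m-1}(y)+\kappa_m y^{2m}\sum_{n=j-m+2}^{m-1}a_n y^{2n}$, the polynomial contribution being empty when $j=2m-2$. By the telescoping identity for the $Q_k$, namely $Q_k(y)=Q_{m-1}(y)+\kappa_m y^{2m}\sum_{n=k+1}^{m-1}a_n y^{2n}$, this coefficient equals $Q_{j-m+1}(y)$, and the substitution $j=m+k-1$ turns the collected corrections exactly into \eqref{5.10}. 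The main obstacle is precisely this index bookkeeping: keeping the summation ranges and the boundary cases ($k=0$, where no correction arises, and $j=2m-2$, where $Q_{j-m+1}=Q_{m-1}$) consistent so that the double sums match the single sum in \eqref{5.10} term by term.

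Finally I would justify all the rearrangements by absolute convergence, which is where the hypothesis $d\ge 6$, i.e.\ $m\ge 2$, enters. Near $y=0$ the expansion \eqref{5.6} gives $1/(y^3 G_m(y))=O(y^{2m-3})=O(y^{d-5})$, integrable since $d-5\ge 1$, while the inner $u$-integral of $|R_{m-2}(u^2)|e^{-2\sqrt t uy}$ grows at most like $y^{-(2m-3)}$, so the product stays bounded; near $y=\infty$ the factor $1/G_m(y)$ decays like $y\,e^{-2y}$ and dominates every polynomial in $y$ generated by the $u$-integration. Hence the defining double integral for $L_2^2(t)$, and each of the finitely many integrals obtained from it, is absolutely convergent, so Fubini's theorem legitimises both the splitting of $1/G_m$ and the interchange of summations, completing the proof.
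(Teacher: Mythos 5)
Your argument is correct and is essentially the paper's own proof in different bookkeeping: the paper peels the polynomial off one layer at a time via the one-step identity
$Q_{k-1}(y)\,R_{m+k-2}(u^2)=Q_k(y)\,R_{m+k-1}(u^2)+\kappa_m a_k y^{2(m+k)}R_{m+k-2}(u^2)+\tfrac{(-1)^{m+k-1}}{(m+k-1)!}\,Q_k(y)\,u^{2(m+k-1)}$
and telescopes over $k=1,\dots,m-1$ starting from the split $1/G_m=\kappa_m y^{2m}+Q_0$, whereas you split $1/G_m$ completely at the outset and regroup the monomial corrections by powers of $u$ using $Q_k=Q_{m-1}+\kappa_m y^{2m}\sum_{n=k+1}^{m-1}a_n y^{2n}$ --- the same two telescoping identities, just summed in a different order. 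One minor inaccuracy in your added convergence remark: the integral \eqref{5.9} contains no factor $1/G_m(y)$, so its integrability at $y=\infty$ cannot come from the decay of $1/G_m$ but rather from the $y^{-(2m+2k-1)}$ decay produced by the inner $u$-integration; this does not affect the verdict, since the paper's own proof of this lemma is likewise purely algebraic and defers all integrability estimates to the subsequent evaluation of the three pieces.
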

\begin{proof}
By the definition of $Q_0$ and $R_k$, we have
\begin{equation}
\begin{split}
L_2^2(t)=&\frac2{\sqrt\pi}\int_0^\infty \!\!\! \int_0^\infty
\frac{e^{-2\sqrt t uy}}{y^3}\kappa_m y^{2m}R_{m-2}(u^2)dudy\\
&+\frac2{\sqrt\pi}\int_0^\infty \!\!\! \int_0^\infty
\frac{e^{-2\sqrt t uy}}{y^3}
\frac{(-1)^{m-1}Q_0(y)}{(m-1)!}u^{2(m-1)}dudy\\
&+\frac2{\sqrt\pi}\int_0^\infty \!\!\! \int_0^\infty
\frac{e^{-2\sqrt t uy}}{y^3} Q_0(y) R_{m-1}(u^2)dudy.
\end{split}
\label{5.12}
\end{equation}
Moreover, it follows that, for $y>0$, $u>0$ and an integer $k\geqq1$,
\[
\begin{split}
Q_{k-1}(y) R_{m+k-2}(u^2)
=&Q_k(y) R_{m+k-1}(u^2)+\kappa_m a_k y^{2(m+k)} R_{m+k-2}(u^2)\\
&+\frac{(-1)^{m+k-1}}{(m+k-1)!}Q_k(y)u^{2(m+k-1)}.
\end{split}
\]
Taking the sum on $k$ over $[1,m-1]$, we deduce that the third term
of the right hand side of \eqref{5.12} is equal to
\[
\begin{split}
&\frac2{\sqrt\pi}\int_0^\infty \!\!\! \int_0^\infty
\frac{e^{-2\sqrt t uy}}{y^3} Q_{m-1}(y) R_{2m-2}(u^2)dudy\\
&+\frac2{\sqrt\pi}\int_0^\infty \!\!\! \int_0^\infty
\frac{e^{-2\sqrt t uy}}{y^3}
\kappa_m y^{2m} \sum_{k=1}^{m-1}a_ky^{2k} R_{m+k-2}(u^2)dudy\\
&+\frac2{\sqrt\pi}\int_0^\infty \!\!\! \int_0^\infty
\frac{e^{-2\sqrt t uy}}{y^3}
\sum_{k=1}^{m-1} \frac{(-1)^{m+k-1}Q_k(y)}{(m+k-1)!}u^{2(m+k-1)} dudy.
\end{split}
\]
Hence we conclude that $L_2^2(t)$ is the sum of
\eqref{5.9}, \eqref{5.10} and \eqref{5.11}.
\end{proof}

\medskip

For \eqref{5.9} we first carry out the integral in $y$ and use \eqref{5.5}.
Then we see that \eqref{5.9} is equal to
\begin{equation}
\begin{split}
&\frac{2\kappa_m}{\sqrt\pi}\sum_{k=0}^{m-1}
\frac{a_k(2m+2k-3)!}{2^{2m+2k-2} t^{m+k-1}}
\int_0^\infty\frac{R_{m+k-2}(u^2)}{u^{2(m+k-1)}}du\\
&=\sqrt\pi \kappa_m \sum_{k=0}^{m-1}
\frac{(-1)^{m+k-1}a_k \varGamma(2m+2k-2)}{2^{2m+2k-2}\varGamma(m+k-1/2)}
\frac1{t^{m+k-1}}.
\end{split}
\label{5.13}
\end{equation}
Since
\[
\frac{\varGamma(2m+2k-2)}{\varGamma(m+k-1/2)}
=\frac{2^{2m+2k-3}\varGamma(m+k-1)}{\sqrt\pi},
\]
which is the direct consequence of \eqref{4.8},
the right hand side of \eqref{5.13} and so \eqref{5.9} are equal to
\[
\frac{\kappa_m}2\sum_{k=0}^{m-1}
\frac{(-1)^{m+k-1}\varGamma(m+k-1)a_k}{t^{m+k-1}}.
\]

Carrying out the integral on $u$ in \eqref{5.10},
we see that \eqref{5.10} is equal to
\[
\frac2{\sqrt\pi}\sum_{k=0}^{m-1}
\frac{(-1)^{m+k-1}(2m+2k-2)!}{2^{2m+2k-1}(m+k-1)!}
\frac1{t^{m+k-1/2}} \int_0^\infty \frac{Q_k(y)}{y^{2m+2k+2}}dy,
\]
which coincides with
\begin{equation}
\frac1\pi\sum_{k=0}^{m-1}\frac{(-1)^{m+k-1}\varGamma(m+k-1/2)}{t^{m+k-1/2}}
\int_0^\infty \frac{Q_k(y)}{y^{2m+2k+2}}dy.
\label{5.14}
\end{equation}
Here we have applied
\begin{equation}
\frac{\varGamma(2z-1)}{\varGamma(z)}
=\frac1{2z-1}\frac{\varGamma(2z)}{\varGamma(z)}
=\frac{2^{2z-2}}{\sqrt\pi}\varGamma\biggl(z-\frac12\biggr).
\label{5.15}
\end{equation}
We should note that the integral in the right hand side of \eqref{5.14}
converges for each integer $k=0,1,2,\dots,m-1$. It is easy to see
that \eqref{4.13} immediately yields that, as $x\to\infty$,
\begin{equation}
Q_k(x)=\kappa_m x^{2m+2k}+o(x^{2m+2k}).
\label{5.16}
\end{equation}
Moreover we deduce from \eqref{5.6} that, as $x\downarrow0$,
\[
Q_k(x)=
\begin{cases}
\kappa_m a_{k+1} x^{2m+2k+2}+O(x^{2m+2k+4})\quad
&\text{if $0\leqq k\leqq m-2$,}\\
\kappa_m a_m x^{4m}\log \dfrac1x+O(x^{4m})\quad
&\text{if $k=m-1$.}
\end{cases}
\]
Hence $Q_k(y)/y^{2m+2k+2}$ is integrable on $(0,\infty)$
for each $k=0,1,2,\dots,m-1$.

For $t>0$ we let
\[
\begin{split}
&P_1(t)=\frac2{\sqrt\pi}\int_1^\infty \frac{Q_{m-1}(y)}{y^3} dy
\int_0^\infty e^{-2\sqrt t uy}R_{2m-2}(u^2)du,\\
&P_2(t)=\frac2{\sqrt\pi}\int_0^1 \frac{Q_m(y)}{y^3} dy
\int_0^\infty e^{-2\sqrt t uy}R_{2m-2}(u^2)du,\\
&P_3(t)=\frac{2\kappa_m a_m}{\sqrt\pi}\int_0^1 y^{4m-3}\log\frac1y \, dy
\int_0^\infty  e^{-2\sqrt t uy}R_{2m-2}(u^2)du,\\
\end{split}
\]
where we put
\[
Q_m(x)=Q_{m-1}(x)-\kappa_m a_m x^{4m}\log\frac1x.
\]
Then \eqref{5.11} is the sum of $P_1(t)$, $P_2(t)$ and $P_3(t)$.

By virtue of \eqref{5.16}, we obtain that $|Q_{m-1}(y)|\leqq C_3 y^{4m-2}$
for $y\geqq1$. Combining this estimate with
$|R_{2m-2}(u^2)|\leqq C_4 u^{4m-2}$, we deduce
\[
|P_1(t)|\leqq C_5 \int_1^\infty y^{4m-5} dy
\int_0^\infty e^{-2\sqrt t uy} u^{4m-2}du
=\frac{C_6}{t^{2m-1/2}}\int_1^\infty\frac{dy}{y^4}.
\]
This means that $P_1(t)$ is of order $1/t^{2m-1/2}$
an is negligible.

We next show that $P_2(t)$ is of order $1/t^{2m-1}$.
It follows from \eqref{5.6} that $|Q_m(y)|\leqq C_7y^{4m}$ for $y<1$.
Noting $R_{2m-2}(x)\leqq0$ for $x\geqq0$, we obtain by the Fubini
theorem that
\[
|P_2(t)|\leqq C_8 \int_0^\infty -R_{2m-2}(u^2)du
\int_0^\infty e^{-2\sqrt t uy} y^{4m-3}dy.
\]
The formula \eqref{5.5} implies that
\[
|P_2(t)|\leqq \frac{C_9}{t^{2m-1}}\int_0^\infty
\frac{-R_{2m-2}(u^2)}{u^{4m-2}}du
=\frac{C_9 \pi}{2\varGamma(2m-1/2)}\frac1{t^{2m-1}}.
\]

The calculation of $P_3(t)$ is slightly complicated but not difficult.
A change of variables from $y$ to $v$ given by $2\sqrt t y=v$ yields
\[
P_3(t)=\frac{\kappa_m a_m}{2^{4m-3}\sqrt\pi t^{2m-1}}
\int_0^{2\sqrt t} v^{4m-3}\log\frac{2\sqrt t}v\, dv
\int_0^\infty e^{-uv} R_{2m-2}(u^2)du.
\]
For $t>1/4$ we set
\[
\begin{split}
&P_3^1(t)=\log(2\sqrt t)\int_0^\infty v^{4m-3} dv
\int_0^\infty e^{-uv}R_{2m-2}(u^2)du,\\
&P_3^2(t)=\log(2\sqrt t)\int_{2\sqrt t}^\infty v^{4m-3} dv
\int_0^\infty e^{-uv}R_{2m-2}(u^2)du,\\
&P_3^3(t)=\int_1^{2\sqrt t} v^{4m-3}\log v \, dv
\int_0^\infty e^{-uv}R_{2m-2}(u^2)du,\\
&P_3^4(t)=\int_0^1 v^{4m-3}\log\frac1v \, dv
\int_0^1 e^{-uv}R_{2m-2}(u^2)du,\\
&P_3^5(t)=\int_0^1 v^{4m-3}\log\frac1v \, dv
\int_1^\infty e^{-uv}R_{2m-2}(u^2)du.\\
\end{split}
\]
Then we have
\[
P_3(t)=\frac{\kappa_m a_m}{2^{4m-3}\sqrt\pi t^{2m-1}}
\{ P_3^1(t)-P_3^2(t)-P_3^3(t)+P_3^4(t)+P_3^5(t)\}.
\]
We show that $P_3^1(t)$ is the leading part of $P_3(t)$
and the others are all negligible. Recall that $R_{2m-2}(u^2)\leqq0$.
It follows from \eqref{5.5} and the Fubini theorem that
\[
P_3^1(t)=\biggl(\frac{\log t}2+\log2\biggr)(4m-3)!
\int_0^\infty \frac{R_{2m-2}(u^2)}{u^{4m-2}}du
=-\frac{\pi \varGamma(4m-2)}{4\varGamma(2m-1/2)}\log t+O(1).
\]
Applying \eqref{4.8} for $z=2m-1$, we have
\[
P_3^1(t)=-2^{4m-5}\sqrt\pi \varGamma(2m-1) \log t +O(1).
\]
The estimate of $P_3^2(t)$ is easy. Indeed, we have
\[
|P_3^2(t)|\leqq C_{10} \log t \int_{2\sqrt t}^\infty
v^{4m-3} dv\int_0^\infty e^{-uv}u^{4m-2}du
\leqq C_{11} \log t \int_{2\sqrt t}^\infty \frac{dv}{v^2},
\]
which is of order $\log t/\sqrt t$. The way of estimates of the remaining
integrals is similar to that of $P_3^2(t)$. We deduce
\[
|P_3^3(t)|\leqq C_{12}\int_1^\infty v^{4m-3}\log v \,dv
\int_0^\infty e^{-uv}u^{4m-2}du
\leqq C_{13} \int_1^\infty \frac{\log v}{v^2}dv\\
\]
and that, by $0\leqq e^{-uv}u^{4m-2}\leqq1$ for $u,v\in[0,1]$,
\[
|P_3^4(t)|\leqq C_{14}\int_0^1 v^{4m-3}\log\frac1v \, dv
\int_0^1 e^{-uv}u^{4m-2}du
\leqq C_{14}\int_0^1 v^{4m-3}\log\frac1v \, dv.
\]
These immediately imply that $P_3^3(t)$ and $P_3^4(t)$ are of order 1.
Note that $R_n(x)$ is asymptotically equal to
$(-1)^{n+1} x^n/n!$ as $x\to\infty$ for an integer $n\geqq0$.
This yields that $|R_{2m-2}(u^2)|$ is bounded by
$C_{15}u^{4m-4}$ for $u\geqq1$ and then we deduce
\[
|P_3^5(t)|\leqq C_{15}\int_0^1 v^{4m-3}\log \frac1v \, dv
\int_1^\infty e^{-uv}u^{4m-4}du
\leqq C_{16}\int_0^1\log\frac1v \, dv=C_{16}.
\]
Therefore, by virtue of Lemma \ref{Lemma 5.4},
we accordingly obtain
\[
\begin{split}
L_2^2(t)=&\frac{\kappa_m}2\sum_{k=0}^{m-1}
\frac{(-1)^{m+k-1}\varGamma(m+k-1)\,a_k}{t^{m+k-1}}\\
&+\frac1\pi \sum_{k=0}^{m-1}
\frac{(-1)^{m+k-1}\varGamma(m+k-1/2)}{t^{m+k-1/2}}
\int_0^\infty \frac{Q_k(y)}{y^{2m+2k+2}}dy\\
&-\frac{\kappa_m a_m \varGamma(2m-1)}4
\frac{\log t}{t^{2m-1}}+O\biggl(\frac1{t^{2m-1}}\biggr),
\end{split}
\]
which implies that we finished to give
the asymptotic behavior of $L_2^0(t)$.

Recall the definition of $\kappa_m$ and $a_m$.
We deduce from \eqref{5.15} that
\[
\begin{split}
-\frac{\kappa_m a_m \varGamma(2m-1)}4&=\frac1{4^{m-1}\varGamma(m)^2}
\frac{(-1)^m}{4^{m-1} m \varGamma(m)^2}\frac{\varGamma(2m-1)}4\\
&=\frac{(-1)^m}{4^m\sqrt\pi\, m \varGamma(m)^3}
\varGamma\biggl(m-\frac12\biggr).
\end{split}
\]
Since $L_2(t)=r^2 L_2^0(t)(t/2r^2)$, we obtain,
by \eqref{5.3} for $M=d-4$,
that
\[
\begin{split}
&-L_1(t)+(-1)^{d/2-1}L_2(t)\\
&=-\sqrt{\frac2\pi}\sum_{n=0}^{d-4}
\frac{(-1)^n(2n)!\,r^{2n+3}\zeta_{2n+3}^{(d)}}{2^n n!}\frac1{t^{n+1/2}}\\
&\hphantom{=}-\sqrt{\frac2\pi}\sum_{n=0}^{d/2-3}
\frac{(-1)^{d/2+n}(2n)!\,r^{2n+3}\varrho_{2n+3}^{(d)}}{2^n n!}
\frac1{t^{n+1/2}}\\
&\hphantom{=}-\kappa_{d/2-1}\sum_{n=0}^{d/2-2}
\frac{(-1)^n\varGamma(d/2+n-2)a_n 2^{d/2+n-3}r^{d+2n-2}\xi_{n}^{(d)}}
{t^{d/2+n-2}}\\
&\hphantom{=}+\frac{\varGamma((d-3)/2)r^{2d-4}}
{\sqrt\pi(d-2)\varGamma(d/2-1)^3}\frac{\log t}{t^{d-3}}
+O\biggl(\frac1{t^{d-3}}\biggr),
\end{split}
\]
where
\[
\xi_k^{(d)}=\int_0^\infty\frac{Q_k(y)}{y^{d+2k}}dy.
\]
It follows from \eqref{5.1} and \eqref{5.2}
that $\zeta_{d-1}^{(d)}=0$ and
\[
\zeta_{2n+3}^{(d)}+(-1)^{d/2}\varrho_{2n+3}^{(d)}=0
\]
for $n=0,1,2,\dots,d/2-3$. Therefore we can conclude that
\[
\begin{split}
-L_1(t)+(-1)^{d/2-1}L_2(t)
=&-\frac{r^{d-2} \xi_0^{(d)}}
{2^{d/2-2}(d-4)\varGamma(d/2-1)}\frac1{t^{d/2-2}}\\
&+\frac1{t^{d/2-1}}\sum_{n=0}^{d-5}
\frac{\alpha_n^{(d)}}{t^{n/2}}\\
&+\frac{\varGamma((d-3)/2)r^{2d-4}}
{\sqrt\pi(d-2)\varGamma(d/2-1)^3}\frac{\log t}{t^{d-3}}
+O\biggl(\frac1{t^{d-3}}\biggr).
\end{split}
\]
Our proof of Theorem \ref{Theorem 5.1} is completed.


\section{Zeros of Macdonald functions}

\noindent
We can find enough properties concerning zeros of $J_\nu$, $Y_\nu$
and $I_\nu$ (cf. \cite{13,19}). However there is
less information on zeros of $K_\nu$.
See \cite{12B} and \cite{21}, for example.

Our purpose in this section is to represent all zeros
of $K_\nu$ as the root of a polynomial of order $N(\nu)$.
Since $K_\nu=K_{-\nu}$ and $N(\nu)\geqq1$ if $|\nu|\geqq3/2$,
it is sufficient to consider the case of $\nu\geqq3/2$. Moreover,
if $\nu=n+1/2$ for an integer $n\geqq1$, the formula
\[
K_\nu(z)=\sqrt{\frac\pi{2z}}e^{-z}\sum_{k=0}^n \frac{(\nu,k)}{(2z)^k}
\]
yields that all zeros of $K_\nu$ are the solutions of the equation
\[
\sum_{k=0}^n \frac{(\nu,n-k)}{2^{n-k}}z^k=0.
\]
Here we have used the notation
\[
(\nu,k)=\frac{\varGamma(\nu+k+1/2)}{k!\varGamma(\nu-k+1/2)}.
\]

From now on, we discuss the cases when $\nu>3/2$ and
$\nu-1/2\notin\mathbb Z$. By virtue of \eqref{2.6},
we have that, for $x>0$
\begin{equation}
\frac{K_{\nu+1}(x)}{K_\nu(x)}=1+\frac{2\nu}x
+\sum_{j=1}^{N(\nu)}\frac1{z_{\nu,j}-x}
+\cos(\pi\nu)\int_0^\infty \frac{dy}{y(y+x)G_{\nu}(y)}.
\label{6.1}
\end{equation}
We can derive the power sum of $z_{\nu,1},z_{\nu,2},\dots,z_{\nu,N(\nu)}$
with the help of \eqref{6.1}. The Newton formula (cf. \cite[p.276]{4}) gives
the polynomial whose roots are $z_{\nu,1},z_{\nu,2},\dots,z_{\nu,N(\nu)}$.

\medskip

\begin{lemma}\label{Lemma 6.1}
Let
\[
p_n=\sum_{j=1}^N z_j^n
\]
for positive integers $n,N$ and $z_1,z_2,\dots,z_N\in\mathbb C$.
We define a sequence $\{\xi_n\}_{n=0}^N$ of complex numbers
by $\xi_0=1$ and
\[
\xi_n=-\frac1n\sum_{k=1}^n \xi_{n-k}p_k
\]
for $n=1,2,\dots,N$. Then we have that, for $z\in\mathbb C$
\begin{equation}
\prod_{j=1}^N (z-z_j)=\sum_{n=0}^N \xi_{N-n}z^n.
\label{6.2}
\end{equation}
\end{lemma}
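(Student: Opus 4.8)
The plan is to recognize Lemma \ref{Lemma 6.1} as a repackaging of the classical Newton--Girard identities and to prove it by a single generating-function computation. First I would reduce the polynomial identity \eqref{6.2} to the equivalent statement
\[
\prod_{j=1}^N(1-z_j t)=\sum_{n=0}^N \xi_n t^n,
\]
regarded as an identity of polynomials in $t$. Both sides of \eqref{6.2} are monic of degree $N$ in $z$ (the top coefficient on the right is $\xi_0=1$), so substituting $t=1/z$ into the display above and multiplying through by $z^N$ gives $\prod_{j=1}^N(z-z_j)=\sum_{n=0}^N\xi_n z^{N-n}$, and the reindexing $n\mapsto N-n$ recovers \eqref{6.2} exactly.

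Next I would set $E(t)=\prod_{j=1}^N(1-z_jt)$ and compute its logarithmic derivative as a formal power series. Using the geometric expansion $-z_j/(1-z_jt)=-\sum_{k\geqq1}z_j^{k}t^{k-1}$ and summing over $j$, one obtains
\[
\frac{E'(t)}{E(t)}=-\sum_{k\geqq1}p_k t^{k-1},
\]
where $p_k=\sum_{j=1}^N z_j^k$ are precisely the power sums of the lemma. Writing $E(t)=\sum_{n\geqq0}c_n t^n$ (a polynomial, so $c_n=0$ for $n>N$) and clearing the denominator to get $E'(t)=-E(t)\sum_{k\geqq1}p_k t^{k-1}$, I would compare the coefficients of $t^{n-1}$ on the two sides. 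The left side contributes $n\,c_n$, while the right side contributes $-\sum_{k=1}^n c_{n-k}p_k$ (the pairs with $i+(k-1)=n-1$, $k\geqq1$). This yields $n\,c_n=-\sum_{k=1}^n c_{n-k}p_k$ together with $c_0=E(0)=1$.

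Finally I would note that this recursion, with initial value $c_0=1$, determines the sequence $\{c_n\}$ uniquely, and that it is verbatim the recursion defining $\{\xi_n\}$ in the statement. Hence $c_n=\xi_n$ for all $n$, so in particular the coefficients of $E(t)$ for $0\leqq n\leqq N$ are the $\xi_n$; this proves the displayed generating-function identity and therefore \eqref{6.2}. I do not expect a genuine obstacle, since the result is classical and the paper already invokes the Newton formula; the only point demanding care is the index-and-sign bookkeeping in the coefficient comparison, namely verifying that matching $t^{n-1}$ in $E'(t)=-E(t)\sum_k p_k t^{k-1}$ reproduces exactly the factor $-1/n$ and the convolution $\sum_{k=1}^n\xi_{n-k}p_k$. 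Working throughout with formal power series sidesteps any convergence questions, as every identity involved is one of polynomials in $t$.
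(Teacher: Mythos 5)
Your proof is correct, but it follows a genuinely different route from the paper's. The paper's proof introduces the elementary symmetric polynomials $s_n$ of $z_1,\dots,z_N$, quotes the Newton formula (with a citation to Grove's algebra text) relating the power sums $p_n$ to the $s_n$, and then deduces \eqref{6.2} by identifying $\xi_n$ with $(-1)^n s_n$ and invoking Vieta's expansion $\prod_{j=1}^N(z-z_j)=\sum_{n=0}^N(-1)^{N-n}s_{N-n}z^n$; the combinatorial content is thus outsourced to a cited classical identity, and the matching of the two recursions is left implicit ("we easily deduce"). You instead bypass the Newton identities entirely: the logarithmic derivative of $E(t)=\prod_{j=1}^N(1-z_jt)$, expanded as a formal power series, shows directly that the coefficients $c_n$ of $E$ satisfy $c_0=1$ and $nc_n=-\sum_{k=1}^n c_{n-k}p_k$, and uniqueness of the solution of this recursion forces $c_n=\xi_n$; reversing coefficients via $t=1/z$ then gives \eqref{6.2}. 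In effect you reprove Newton's identities in the exact packaging the lemma needs, which buys a self-contained argument with no external reference and no sign bookkeeping between $p_n$, $s_n$, and $\xi_n$, at the cost of a slightly longer write-up; the paper's version is shorter on the page precisely because it leans on the citation. All the individual steps in your argument check out: the reduction between the $z$-form and the $t$-form is legitimate (polynomials agreeing for all $z\neq0$ agree identically), the expansion $E'(t)/E(t)=-\sum_{k\geqq1}p_kt^{k-1}$ is valid as formal power series since each factor $1-z_jt$ has unit constant term, and the coefficient extraction reproduces exactly the factor $-1/n$ and the convolution in the definition of $\xi_n$.
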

\begin{proof} Let $s_n$ be the elementary symmetric polynomial
of degree $n$, that is,
\[
s_n=\sum_{1\leqq j_1<j_2<\dots<j_n\leqq N}z_{j_1}z_{j_2}\dots z_{j_n}.
\]
The Newton formula yields that $p_1=-s_1$ and
\[
p_n=\sum_{k=1}^{n-1} (-1)^{n-k+1} s_{n-k}p_k+(-1)^{n+1}n s_n
\]
for $n=2,3,\dots,N$. Therefore we easily deduce (6.2) from the formula
\[
\prod_{j=1}^N (z-z_j)=\sum_{n=0}^N (-1)^{N-k}s_{N-k} z^n.
\]
This completes the proof of this lemma.
\end{proof}

\medskip

We first consider the asymptotic expansion of \eqref{6.1} for large $x$.
Recall that, for $\nu\geqq0$ and  a given integer $M\geqq0$
\[
K_\nu(x)=\sqrt{\frac\pi{2x}}e^{-x}\sum_{n=0}^{M+1}
\frac{(\mu,n)}{(2x)^n}+O\biggl(\frac1{x^{M+2}}\biggr)
\]
as $x\to\infty$ (cf. \cite[p.123]{13}, \cite[p.202]{19})
and we have
\begin{equation}
\frac{K_{\nu+1}(x)}{K_\nu(x)}=\sum_{n=0}^{M+1}\frac{a_n}{x^n}
+O\biggl(\frac1{x^{M+2}}\biggr),
\label{6.3}
\end{equation}
where $\{a_n\}_{n=0}^{M+1}$ is the sequence of real numbers
defined by
\[
\frac{(\nu+1,n)}{2^n}=\sum_{k=0}^n\frac{(\nu,n-k)}{2^{n-k}}a_k
\]
for $n=0,1,2,\dots,M+1$. A simple calculation shows
\[
\begin{split}
&a_0=1,\quad a_1=\nu+\frac12,\quad
a_2=\frac12\biggl(\nu^2-\frac14\biggr),\quad
a_3=-\frac12\biggl(\nu^2-\frac14\biggr),\\
&a_4=-\frac18\biggl(\nu^2-\frac14\biggr)\biggl(\nu^2-\frac{25}4\biggr),\quad
a_5=\frac12\biggl(\nu^2-\frac14\biggr)\biggl(\nu^2-\frac{13}4\biggr).
\end{split}
\]
The remaining constants $a_6, a_7,\dots$ have complicated forms.
It is easy to give the asymptotic expansion of
the right hand side of \eqref{6.1} and then we have that,
for $M\geqq N(\nu)$
\[
\frac{K_{\nu+1}(x)}{K_\nu(x)}=
1+\frac{2\nu}x-\frac1x\sum_{j=1}^{N(\nu)}\frac1{1-z_{\nu,j}/x}
+\frac{\cos(\pi\nu)}x \int_0^\infty \frac{dy}{y(1+y/x)G_\nu(y)}
\]
and
\begin{equation}
\begin{split}
\frac{K_{\nu+1}(x)}{K_\nu(x)}=&1+\frac{2\nu}x-\sum_{n=0}^M
\frac1{x^{n+1}}\sum_{j=1}^{N(\nu)}z_{\nu,j}^n\\
&+\cos(\pi\nu)\sum_{n=0}^M \frac{(-1)^n}{x^{n+1}}
\int_0^\infty \frac{y^{n-1}}{G_\nu(y)}dy+O\biggl(\frac1{x^{M+2}}\biggr).
\end{split}
\label{6.4}
\end{equation}
Here we should note that the integral of $y^{m-1}/G_\nu(y)$
over $(0,\infty)$ converges for each integer $m\geqq0$,
which can be shown by Lemm \ref{Lemma 2.3} and \eqref{2.25}.
Comparing the corresponding coefficients in
\eqref{6.3} and \eqref{6.4}, we obtain
\begin{align}
&N(\nu)=\nu-\frac12+\cos(\pi\nu)\int_0^\infty\frac{dy}{y G_\nu(y)},
\label{6.5}\\
&\sum_{j=1}^{N(\nu)}z_{\nu,j}^n=-a_{n+1}+(-1)^n\cos(\pi\nu)
\int_0^\infty\frac{y^{n-1}}{G_\nu(y)}dy
\label{6.6}
\end{align}
for $n=1,2,\dots,M$. We define a sequence 
$\{\alpha_n^\nu\}_{n=0}^{N(\nu)}$ of complex numbers by
$\alpha_0^\nu=1$ and
\[
\alpha_n^\nu=\frac1n \sum_{k=1}^n \alpha_{n-k}^\nu
\biggl\{ a_{k+1}-(-1)^k \cos(\pi\nu)
\int_0^\infty \frac{y^{k-1}}{G_\nu(y)}dy \biggr\}
\]
for $n=1,2,\dots, N(\nu)$. Therefore, by \eqref{6.6}
and Lemma \ref{Lemma 6.1}, we have the following theorem.

\medskip

\begin{thm}\label{Theorem 6.2}
For $|\nu|>3/2$ the zeros of $K_\nu$
are the solutions of
\[
\sum_{n=0}^{N(\nu)}\alpha_{N(\nu)-n}^\nu z^n=0.
\]
\end{thm}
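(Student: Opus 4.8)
The plan is to read Theorem \ref{Theorem 6.2} as a direct application of the Newton identities packaged in Lemma \ref{Lemma 6.1} to the power sums already computed in \eqref{6.6}. Since $K_\nu=K_{-\nu}$, the zeros of $K_\nu$ and of $K_{-\nu}$ coincide, so it suffices to treat $\nu>3/2$; the half-integer case $\nu=n+1/2$ is already settled by the explicit polynomial recorded at the start of this section, and it is consistent with the formula below because $\cos(\pi\nu)=0$ there makes the moment terms drop out. First I would fix $\nu>3/2$ with $\nu-1/2\notin\mathbb Z$ and set $N=N(\nu)$ together with $p_k=\sum_{j=1}^{N}z_{\nu,j}^k$. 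Equation \eqref{6.6}, valid for $1\leqq k\leqq N$, expresses each $p_k$ through the expansion coefficients $a_{k+1}$ and the convergent moment integrals $\int_0^\infty y^{k-1}/G_\nu(y)\,dy$.

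Next I would feed these power sums into Lemma \ref{Lemma 6.1}. The lemma sets $\xi_0=1$ and $\xi_n=-\frac1n\sum_{k=1}^n\xi_{n-k}p_k$; substituting \eqref{6.6}, the leading minus sign turns $-p_k$ into $a_{k+1}-(-1)^k\cos(\pi\nu)\int_0^\infty y^{k-1}/G_\nu(y)\,dy$, so that
\[
\xi_n=\frac1n\sum_{k=1}^n\xi_{n-k}\Bigl\{a_{k+1}-(-1)^k\cos(\pi\nu)\int_0^\infty\frac{y^{k-1}}{G_\nu(y)}\,dy\Bigr\}.
\]
This is verbatim the recursion defining $\alpha_n^\nu$, with the same seed $\xi_0=\alpha_0^\nu=1$. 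A one-line induction on $n$ then yields $\xi_n=\alpha_n^\nu$ for all $0\leqq n\leqq N$.

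Finally, conclusion \eqref{6.2} of Lemma \ref{Lemma 6.1} gives
\[
\prod_{j=1}^{N}(z-z_{\nu,j})=\sum_{n=0}^{N}\xi_{N-n}z^n=\sum_{n=0}^{N}\alpha_{N-n}^\nu z^n,
\]
a monic polynomial of degree $N(\nu)$ whose roots are precisely $z_{\nu,1},\dots,z_{\nu,N(\nu)}$; since these are exactly the zeros of $K_\nu$ and are all distinct, they exhaust the root set, which is the assertion of the theorem. I do not expect a serious obstacle at this stage: the genuine work sits upstream, in deriving \eqref{6.6} by matching the large-$x$ expansions \eqref{6.3} and \eqref{6.4}, and in establishing Lemma \ref{Lemma 6.1}. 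The only points needing care here are bookkeeping — confirming that \eqref{6.6} is available over the full range $1\leqq k\leqq N(\nu)$ (guaranteed by choosing $M\geqq N(\nu)$) and that each moment integral converges (by Lemma \ref{Lemma 2.3} and \eqref{2.25}) — together with the sign reconciliation between the Newton recursion and the definition of $\alpha_n^\nu$ carried out above.
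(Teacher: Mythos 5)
Your proposal is correct and follows exactly the paper's own route: the paper likewise obtains Theorem \ref{Theorem 6.2} by observing that the power sums from \eqref{6.6} (valid for $M\geqq N(\nu)$, with the moment integrals convergent by Lemma \ref{Lemma 2.3} and \eqref{2.25}) turn the Newton recursion $\xi_n=-\frac1n\sum_{k=1}^n\xi_{n-k}p_k$ of Lemma \ref{Lemma 6.1} into the defining recursion for $\alpha_n^\nu$, so that $\prod_{j=1}^{N(\nu)}(z-z_{\nu,j})=\sum_{n=0}^{N(\nu)}\alpha_{N(\nu)-n}^\nu z^n$. Your sign reconciliation and the reduction to $\nu>3/2$, $\nu-1/2\notin\mathbb Z$ match the paper's treatment, so there is nothing to add.
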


\medskip

We obtain another polynomial whose roots are
$z_{\nu,1},z_{\nu,2},\dots,z_{\nu,N(\nu)}$
by considering the asymptotic behavior of \eqref{6.1} for small $x$,
which is an improvement of Lemma \ref{Lemma 4.3}.
Let $N\geqq1$ be an integer with $N+1/2<\nu<N+3/2$.
It follows from Lemma \ref{Lemma 4.4} that, as $x\downarrow0$,
\begin{equation}
\frac{K_{\nu+1}(x)}{K_\nu(x)}=\frac{2\nu}x\sum_{n=0}^{2N+1}
b_n x^n+o(x^{2N}),
\label{6.7}
\end{equation}
where $\{b_n\}_{n=0}^{2N+1}$ is a sequence of real numbers defined by
\[
\binom{\nu+1/2}n\frac{2^n\varGamma(2\nu-n+2)}{\varGamma(2\nu+2)}=
\sum_{k=0}^n \binom{\nu-1/2}{n-k}
\frac{2^{n-k}\varGamma(2\nu-n+k)}{\varGamma(2\nu)}b_k
\]
for $n=0,1,2,\dots,2N+1$. A simple calculation shows
\[
\begin{split}
&b_0=1,\quad b_1=b_3=b_5=b_7=0,\quad
b_2=\frac1{4\nu(\nu-1)},\\
&b_4=-\frac1{16\nu(\nu-1)^2(\nu-2)},\quad
b_6=\frac1{32\nu(\nu-1)^3(\nu-2)(\nu-3)}.
\end{split}
\]
The remaining coefficients have complicated forms.
We easily get
\begin{equation}
\begin{split}
\frac{K_{\nu+1}(x)}{K_\nu(x)}=&
\frac{2\nu}x+1+\sum_{n=0}^{2N} x^n 
\sum_{j=1}^{N(\nu)}\frac1{z_{\nu,j}^{n+1}}\\
&+\cos(\pi\nu)\sum_{n=0}^{2N} (-1)^n x^n
\int_0^\infty \frac{dy}{y^{n+2}G_\nu(y)}+o(x^{2N}).
\end{split}
\label{6.8}
\end{equation}
We have to remark that we can not derive the higher term in \eqref{6.8}
while $M$ in \eqref{6.4} is arbitrary. This deference
is caused from the integrabiliy of $1/y^m G_\nu(y)$.
Comparing the corresponding coefficients in
\eqref{6.7} and \eqref{6.8}, we obtain
\begin{align}
&\sum_{j=1}^{N(\nu)}\frac1{z_{\nu,j}}=-1-\cos(\pi\nu)
\int_0^\infty \frac{dy}{y^2 G_\nu(y)},
\label{6.9}\\
&\sum_{j=1}^{N(\nu)}\frac1{z_{\nu,j}^n}=2\nu b_n+(-1)^n\cos(\pi\nu)
\int_0^\infty \frac{dy}{y^{n+1} G_\nu(y)}
\label{6.10}
\end{align}
for $n=2,3,\dots,2N$.  We define a sequence 
$\{\beta_n^\nu\}_{n=0}^{N(\nu)}$ of complex numbers by
$\beta_0^\nu=1$,
\[
\begin{split}
&\beta_1^\nu=1+\cos(\pi\nu)
\int_0^\infty \frac{dy}{y^2 G_\nu(y)},\\
&\beta_n^\nu=-\frac1n\sum_{k=2}^n \beta_{n-k}^\nu
\biggl\{2\nu b_k+(-1)^k\cos(\pi\nu)
\int_0^\infty \frac{dy}{y^{k+1} G_\nu(y)}\biggr\}\\
&\hphantom{\beta_n^\nu=}+\frac1n \beta_{n-1}^\nu
\biggl\{1+\cos(\pi\nu)\int_0^\infty \frac{dy}{y^2 G_\nu(y)}\biggr\}
\end{split}
\]
for $n=2,3,\dots,N(\nu)$. By \eqref{6.9}, \eqref{6.10}
and Lemma \ref{Lemma 6.1}, we have the following theorem.

\medskip

\begin{thm}\label{Theorem 6.3}
For $|\nu|>3/2$ the zeros of $K_\nu$ are the solutions of
\[
\sum_{n=0}^{N(\nu)}\beta_n^\nu z^n=0.
\]
\end{thm}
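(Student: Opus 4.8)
The plan is to recognize the numbers $\beta_n^\nu$ as exactly the Newton coefficients produced by Lemma \ref{Lemma 6.1} when it is applied to the \emph{reciprocals} $1/z_{\nu,1},\dots,1/z_{\nu,N(\nu)}$ of the zeros, and then to pass to the reciprocal polynomial in order to obtain a polynomial whose roots are the zeros themselves. First I would read off from \eqref{6.9} and \eqref{6.10} that the power sums $p_n:=\sum_{j=1}^{N(\nu)}z_{\nu,j}^{-n}$ are given explicitly: \eqref{6.9} says $p_1=-\bigl(1+\cos(\pi\nu)\int_0^\infty dy/(y^2G_\nu(y))\bigr)$, while \eqref{6.10} gives $p_n=2\nu b_n+(-1)^n\cos(\pi\nu)\int_0^\infty dy/(y^{n+1}G_\nu(y))$ for $n=2,3,\dots,2N$. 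Since here $N+1/2<\nu<N+3/2$ forces $N(\nu)\in\{N,N+1\}$, hence $N(\nu)\leqq 2N$, these formulae supply $p_1,\dots,p_{N(\nu)}$, which is precisely what Lemma \ref{Lemma 6.1} requires as input.

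Next I would verify that the recursion defining $\{\beta_n^\nu\}$ is exactly the recursion $\xi_n=-\frac1n\sum_{k=1}^n\xi_{n-k}p_k$ of Lemma \ref{Lemma 6.1} for these $p_n$. Indeed $\beta_0^\nu=1=\xi_0$, and $\beta_1^\nu=1+\cos(\pi\nu)\int_0^\infty dy/(y^2G_\nu(y))=-p_1=\xi_1$ by \eqref{6.9}. For $n\geqq2$ the bracket $\{2\nu b_k+(-1)^k\cos(\pi\nu)\int_0^\infty dy/(y^{k+1}G_\nu(y))\}$ occurring in the definition of $\beta_n^\nu$ equals $p_k$ by \eqref{6.10}, while the bracket $\{1+\cos(\pi\nu)\int_0^\infty dy/(y^2G_\nu(y))\}$ equals $-p_1$ by \eqref{6.9}; substituting these and noting that the isolated $\beta_{n-1}^\nu$-term supplies the missing $k=1$ summand collapses the definition into $\beta_n^\nu=-\frac1n\sum_{k=1}^n\beta_{n-k}^\nu p_k$. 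By induction $\beta_n^\nu=\xi_n$ for every $n$, so Lemma \ref{Lemma 6.1} with $N=N(\nu)$ and $z_j=1/z_{\nu,j}$ yields
\[
\prod_{j=1}^{N(\nu)}\Bigl(z-\frac1{z_{\nu,j}}\Bigr)
=\sum_{n=0}^{N(\nu)}\beta_{N(\nu)-n}^\nu z^n ,
\]
where all reciprocals are legitimate because each $z_{\nu,j}\in{\mathbb C}^-$, so $z_{\nu,j}\neq0$.

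Finally I would pass to the reciprocal polynomial. The roots of the polynomial just displayed are the $1/z_{\nu,j}$, so replacing $z$ by $1/z$ and clearing denominators by the factor $z^{N(\nu)}$ produces a polynomial having the $z_{\nu,j}$ as roots; reindexing with $m=N(\nu)-n$ gives
\[
z^{N(\nu)}\sum_{n=0}^{N(\nu)}\beta_{N(\nu)-n}^\nu z^{-n}
=\sum_{m=0}^{N(\nu)}\beta_m^\nu z^m
=\Bigl(\prod_{j=1}^{N(\nu)}\frac{-1}{z_{\nu,j}}\Bigr)
\prod_{j=1}^{N(\nu)}(z-z_{\nu,j}).
\]
Since the prefactor is a nonzero constant, the zeros of $\sum_{m=0}^{N(\nu)}\beta_m^\nu z^m$ are exactly $z_{\nu,1},\dots,z_{\nu,N(\nu)}$, which is the assertion. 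The one step requiring care is this reciprocal-polynomial maneuver, and it is what distinguishes the present argument from that for Theorem \ref{Theorem 6.2}: Lemma \ref{Lemma 6.1} is built to output a \emph{monic} polynomial in whatever quantities it is fed, so feeding it the reciprocals naturally produces the ordering $\beta_{N(\nu)-n}^\nu$, and only after inverting does one recover the ordering $\beta_n^\nu$ claimed in the theorem. I would also record that the leading coefficient $\beta_{N(\nu)}^\nu=\pm\prod_j z_{\nu,j}^{-1}$ is nonzero, so the polynomial genuinely has degree $N(\nu)$ and accounts for all of the zeros of $K_\nu$.
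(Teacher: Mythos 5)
Your proof is correct and follows essentially the same route as the paper: the paper likewise obtains Theorem \ref{Theorem 6.3} by feeding the power sums \eqref{6.9}--\eqref{6.10} of the reciprocals $1/z_{\nu,j}$ into Lemma \ref{Lemma 6.1}, the $\beta_n^\nu$ being exactly the resulting Newton coefficients. Your explicit verification of the recursion and the reciprocal-polynomial step merely spells out what the paper leaves implicit in its one-line citation of \eqref{6.9}, \eqref{6.10} and Lemma \ref{Lemma 6.1}.
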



\begin{remark}\label{Remark 6.4}
It is known that, if $\nu-1/2$ is not an odd integer,
\[
N(\nu)=\nu-\frac12+\frac{\theta_\nu}\pi
\]
(cf. \cite[p.512]{19}). Here $\theta_\nu$ is the unique
number determined by
\[
|\theta_\nu|<\pi,\quad \cos\theta_\nu=\sin\pi\nu,\quad
\sin\theta_\nu=\cos\pi\nu.
\]
Hence we deduce from \eqref{6.5} that,
if $\nu>3/2$ and $\nu-1/2\notin\mathbb Z$,
\begin{equation}
\int_0^\infty \frac{dy}{yG_\nu(y)}
=\frac{\theta_\nu}{\pi\cos(\pi\nu)}.
\label{6.11}
\end{equation}
Similarly, when $0\leqq\nu<3/2$ and $\nu\neq1/2$, we can easily
derive \eqref{6.11} by virtue of \eqref{2.5} and \eqref{6.3}.
This implies that, for an integer $n\geqq0$,
\[
\int_0^\infty \frac{dy}{y\{K_n(y)^2+\pi^2 I_n(y)^2\}}
=\frac12,
\]
which has been obtained in \cite{21}.
\end{remark}

\medskip

\begin{remark}\label{Remark 6.5}
When $3/2<\nu<7/2$, $N(\nu)=2$ and the zeros of $K_\nu$
satisfy some quadratic equations.
In particular, when $\nu=2$, the zeros $z_{2,1}$ and $z_{2,2}$
satisfy
\[
\begin{cases}
\displaystyle
z_{2,1}+z_{2,2}=-\frac{15}8-\int_0^\infty\frac{dy}{G_2(y)},\\
\quad\\
\displaystyle
z_{2,1}^2+z_{2,2}^2=\frac{15}8+\int_0^\infty\frac{ydy}{G_2(y)},
\end{cases}
\quad
\begin{cases}
\displaystyle
\frac1{z_{2,1}}+\frac1{z_{2,2}}=-1-\int_0^\infty\frac{dy}{y^2G_2(y)},\\
\quad\\
\displaystyle
\frac1{z_{2,1}^2}+\frac1{z_{2,2}^2}
=\frac12+\int_0^\infty\frac{dy}{y^3G_2(y)}.\\
\end{cases}
\]
By using Mathematica we obtain from each system of equations
that the zeros are close to $-1.28 \pm 0.43\, i$, and check the comment
\lq\lq The two zeros of $K_2(z)$ are not very far from the points
$-1.29\pm 0.44\, i$.\rq\rq in \cite[p.512]{19}.
Moreover we find that the zeros of $K_3$ are
close to $-1.68\pm 1.31\, i$ in the similar way.
\end{remark}


\noindent Y. Hamana\\
Department of Mathematics\\
Kumamoto University\\
Kurokami 2-39-1\\
Kumamoto 860-8555\\
Japan\\
e-mail: hamana(at)kumamoto-u.ac.jp

\bigskip

\noindent
H. Matsumoto\\
Department of Physics and Mathematics\\
Aoyama Gakuin University\\
Fuchinobe 5-10-1\\
Sagamihara 252-5258\\
Japan\\
e-mail: matsu(at)gem.aoyama.ac.jp

\end{document}